\documentclass[a4paper,12pt]{article}

\textwidth=210truemm\relax
\advance \textwidth -60truemm\relax

\oddsidemargin 30truemm\relax
\advance\oddsidemargin -1truein\relax
\evensidemargin=\oddsidemargin

\textheight=297truemm\relax
\advance\textheight -60truemm\relax
\topmargin 30truemm\relax
\advance\topmargin -1truein\relax
\headheight 0pt
\headsep 0pt

\unitlength\textwidth
\divide\unitlength by 200\relax

\usepackage{tikz}

\usepackage{amsmath,amssymb}
\usepackage{bm}

\bmdefine{\sss}{s}
\bmdefine{\vvv}{v}

\DeclareMathAlphabet{\mathscr}{U}{rsfs}{m}{n}

\newcommand{\msCCC}{\mathscr{C}}
\newcommand{\msOOO}{\mathscr{O}}
\newcommand{\msPPP}{\mathscr{P}}

\newcommand{\msIII}{\mathscr{I}}
\newcommand{\msFFF}{\mathscr{F}}
\newcommand{\msGGG}{\mathscr{G}}
\newcommand{\msXXX}{\mathscr{X}}

\newcommand{\NNN}{\mathbb{N}}
\newcommand{\ZZZ}{\mathbb{Z}}
\newcommand{\QQQ}{\mathbb{Q}}
\newcommand{\RRR}{\mathbb{R}}
\newcommand{\KKK}{\mathbb{K}}

\newcommand{\mmmm}{\mathfrak{m}}
\newcommand{\pppp}{\mathfrak{p}}

\newcommand{\RRRRR}{{\mathcal R}}
\newcommand{\SSSSS}{{\mathcal S}}
\newcommand{\TTTTT}{{\mathcal T}}

\newcommand{\FFFFF}{{\mathcal F}}
\newcommand{\GGGGG}{{\mathcal G}}

\newcommand{\TTTTTn}{{\mathcal T}^{(n)}}
\newcommand{\SSSSSn}{{\mathcal S}^{(n)}}

\newcommand{\covers}{\mathrel{\cdot\!\!\!>}}
\newcommand{\covered}{\mathrel{<\!\!\!\cdot}}
\newcommand{\coht}{\mathrm{coht}}
\newcommand{\height}{\mathrm{ht}}
\newcommand{\grade}{\mathrm{grade}}
\newcommand{\define}{\mathrel{:=}}
\newcommand{\defines}{\mathrel{=:}}

\newcommand{\gor}{Gorenstein}
\newcommand{\cm}{Cohen-Macaulay}

\newcommand{\rank}{\mathrm{rank}}

\newcommand{\relint}{{\rm{relint}}}

\newcommand{\linkcpx}{{\mathrm{link}}}
\newcommand{\starcpx}{{\mathrm{star}}}
\newcommand{\CR}{{\mathrm{CR}}}
\newcommand{\trace}{{\mathrm{tr}}}
\newcommand{\supp}{{\mathrm{supp}}}

\renewcommand{\hom}{{\mathrm{Hom}}}
\newcommand{\spec}{{\mathrm{Spec}}}

\newcommand{\aff}{\mathrm{aff}}

\newcommand{\condn}{{Condition N}}

\newcommand{\sscn}{{sequences with Condition N}}

\newcommand{\xip}{\xi^{+}}


\newcommand{\fc}{{\msFFF_{C}}}
\newcommand{\fcu}{{\msFFF_{(C_1,\ldots, C_u,C'_1, \ldots, C'_u)}}}
\newcommand{\lcu}{{L_{(C_1,\ldots, C_u,C'_1, \ldots, C'_u)}}}
\newcommand{\gab}{{\msGGG_{(a,b)}}}

\newcommand{\gabu}{{\msGGG_{(a_1,\ldots, a_u,b_1, \ldots, b_u)}}}
\newcommand{\mabu}{{M_{(a_1,\ldots, a_u,b_1, \ldots, b_u)}}}
\newcommand{\mab}{{M_{(a,b)}}}

\newcommand{\dist}{{\mathrm{dist}}}

\newcommand{\kcp}{{\KKK[\msCCC(P)]}}
\newcommand{\kop}{{\KKK[\msOOO(P)]}}
\newcommand{\ekp}{E_\KKK[\msPPP]}
\newcommand{\ekcp}{E_\KKK[\msCCC(P)]}
\newcommand{\ekop}{E_\KKK[\msOOO(P)]}

\newcommand{\cdm}{{C^{[d]}_\xi\cap C^{[m]}_\mu}}

\newcommand{\cdM}{{C^{[d]}_\xi\cap (C^{[M]}_\mu\cup C^{[M-1]}_\mu)}}

\newtheorem{thm}{Theorem}[section]
\newtheorem{fact}[thm]{Fact}
\newtheorem{example}[thm]{Example}
\newtheorem{lemma}[thm]{Lemma}
\newtheorem{cor}[thm]{Corollary}
\newtheorem{definition}[thm]{Definition}

\newtheorem{claim}{Claim}[thm]

\newcommand{\bigzerou}{\smash{\lower1.7ex\hbox{\bg 0}}}

\newcommand{\bigastu}{\smash{\lower1.7ex\hbox{\bg *}}}

\newcommand{\refeq}[1]{(\ref{#1})}
\numberwithin{equation}{section}

\newcommand{\cnp}{C}
\newcommand{\cp}{C'}

\newcommand{\mylabel}[1]{{\label{#1}\tt [#1]}}
\let\mylabel=\label

\title{%
Non-Gorenstein loci of Ehrhart rings of chain and order polytopes%
\footnote{%
This work was supported partially by the Research Institute for Mathematical Sciences, an International Joint Usage/Research Center located in Kyoto University.}
}

\author{\textsc{Mitsuhiro Miyazaki}{$^{1}$}%
\quad \textsc{and} \quad
\textsc{Janet Page}{$^{2}$}%
}

\date{\normalsize
}

\begin{document}



\maketitle

\sloppy

\begin{abstract}
Let $P$ be a finite poset,
$\KKK$ a field,
 and $\msOOO(P)$ (resp.\ $\msCCC(P)$) the order
(resp.\ chain) polytope of $P$.
We study the non-\gor\ locus of $\ekop$ (resp.\ $\ekcp$),
the Ehrhart ring of $\msOOO(P)$ (resp.\ $\msCCC(P)$) over $\KKK$, 
which are each normal toric rings associated $P$.
In particular, we show that the dimension of non-\gor\ loci of $\ekop$
and $\ekcp$ are the same.
Further, we show that $\ekcp$ is nearly \gor\ if and only if
$P$ is the disjoint union of pure posets $P_1$, \ldots, $P_s$ with
$|\rank P_i-\rank P_j|\leq 1$ for any $i$ and $j$.
\\
Key Words: chain polytope, order polytope, Ehrhart ring, non-\gor\ locus
\\
MSC: 52B20, 13H10, 06A11, 06A07, 13C15
\end{abstract}

\section{Introduction}

In \cite{hhs}, Herzog, Hibi and Stamate studied the trace of  the canonical module
of a \cm\ local or graded ring:
for a commutative ring $R$ and an $R$-module $M$, the trace of $M$,
denoted by $\trace(M)$, is defined as follows. 
$$
\trace(M)\define\sum_{\varphi\in\hom(M,R)}\varphi(M).
$$
They showed among other things that the trace of the canonical module of a \cm\
local or graded ring is the defining ideal of non-\gor\ locus:
$R_\pppp$ is not \gor\ if and only if $\pppp\supset\trace(\omega_R)$ for $\pppp\in\spec(R)$,
where $\omega_R$ is the canonical module of $R$.  
In particular, we can see that $R$ is Gorenstein if and only if $\trace(\omega_R) = R$.
They also defined the notion of nearly Gorenstein:
a \cm\ local or graded ring $R$ is defined to be nearly \gor\ if $\trace(\omega_R)\supset\mmmm$,
where $\mmmm$ is the unique (graded) maximal ideal of $R$.
They studied nearly \gor\ property for various rings including Hibi rings:
a Hibi ring $\RRRRR_\KKK[\msIII(P)]$ is nearly \gor\ if and only if $P$
is the disjoint union of pure posets $P_1$, \ldots, $P_s$ with
$|\rank P_i-\rank P_j|\leq 1$ for any $i$ and $j$,
where $\msIII(P)$ is the set of poset ideals of $P$.

We study the trace of the canonical for two different toric rings associated to finite posets.  
Hibi began studying what are today known as Hibi rings \cite{hib}
in the study of the existence of algebras with straightening law (ASL for short)
which are integral domains.  
Around this time,
Stanley \cite{sta3} introduced the notion of order polytope $\msOOO(P)$ and chain polytope 
$\msCCC(P)$ of a finite poset $P$.
It was later observed that a Hibi ring on a finite poset $P$ is 
identical to the Ehrhart ring of the order polytope $\msOOO(P)$ of $P$.
Although the definition of order and chain polytopes are quite different,
they share many properties in common.
For example the Hilbert functions of the Ehrhart rings $\ekop$ and $\ekcp$ over a 
field $\KKK$
of the order and the chain polytopes of $P$ are the same and
$\ekop$ and $\ekcp$ have structures of ASL's on $\msIII(P)$ \cite{hib, hl}.

The first author studied the canonical modules of
$\ekop$ and $\ekcp$ and showed that the key notion to deal with the generators of the
canonical module of $\ekop$ (resp.\ $\ekcp$) is \sscn\ (resp. \condn') \cite{mo, mf, mc}.
Conditions N and N' are quite similar but N' is a little weaker than N.  
A similar condition also showed up as the study of ``mixed paths'' in \cite{pag1}.

In this paper, we study the non-\gor\ locus of the Ehrhart rings $\ekop$ and $\ekcp$ of
$\msOOO(P)$ and $\msCCC(P)$
in full generality.  
By the result of Herzog-Hibi-Stamate \cite{hhs}, studying the non-\gor\ locus of a ring is equivalent to studying the radical
of the trace of its canonical module.  Herzog-Mohammadi-Page \cite{hmp} studied this trace for certain toric rings and classified when some toric rings  (including Hibi rings) are Gorenstein on the punctured spectrum.
Here, we classify the trace of the canonical module more explicitly for $\ekop$ and $\ekcp$. We write the radical of the trace of the canonical module of each of these rings as an intersection of prime ideals which can be read off combinatorially from the poset.  More specifically, we show that if
there is a set of elements $a_1$, \ldots, $a_u$, $b_1$, \ldots,
$b_u$, $u\geq 1$, in $P\cup\{-\infty,\infty\}$ with
$a_1<b_1>a_2<b_2>\cdots>a_u<b_u>a_1$, 
$a_i\not\leq a_j$ (resp.\ $b_i\leq b_j$) for any $i$ and $j$ with $i\neq j$
 and
$\sum_{i=1}^u\rank([a_i,b_i])>\sum_{i=1}^{u-1}\dist(a_{i+1},b_i)+\dist(a_1,b_u)$,
then there is a 
prime ideal containing the trace of the canonical module
(which is not unique in the case of the chain polytope) 
corresponding to this set of elements.
See Section \ref{sec:pre} for 
the definitions of these symbols and notation.
Therefore, the dimension of the non-\gor\ loci of $\ekop$ or $\ekcp$ are
measured by the dimensions of these prime ideals.
In particular, we show that the dimensions of the non-\gor\ loci of $\ekop$ and $\ekcp$ are
the same.  
In either case, 
we note
that the dimension of the non-\gor\ locus is at most 
the dimension of the ring minus $4$.
In particular, if $\pppp$ is a prime ideal of $\ekop$
(resp.\ $\ekcp$) with $\height\pppp\leq3$, then $\ekop_\pppp$ 
(resp.\ $\ekcp_\pppp$) is \gor.
We also show 
that we can construct $P$ so that the dimension of the non-Gorenstein locus of 
$\ekop$ or $\ekcp$ is $m$ for any $0 \leq m \leq n-4$, where $n = \dim \ekop = \dim \ekcp$.

This paper is organized as follows.
In Section \ref{sec:pre}, we establish notation and  recall some basic facts.
In Section \ref{sec:chain}, we study the condition that a given Laurent monomial is
contained in the radical of the trace of the canonical module of $\ekcp$.
We also show that $\ekcp$ is nealy \gor\ if and only if so is $\ekop$,
i.e., $\ekcp$ is nealy \gor\ if and only if $P$ is a disjoint union of pure posets
$P_1$, \ldots, $P_s$ with $|\rank P_i-\rank P_j|\leq 1$ for any $i$ and $j$, 
which mirrors the result of \cite{hhs} for $\ekop$.
In Section \ref{sec:order}, we study the condition that a given Laurent monomial
is contained in the radical trace of the canonical module of $\ekop$ by applying the result
established in Section \ref{sec:chain} to the ``covering relation poset''
$\CR(P)$ of $P$ (see Definition \ref{def:cr}).
Finally
in section 5, we study  
the prime ideals containing the trace of the canonical modules of
$\ekop$ and $\ekcp$
which appeared in the results of Sections \ref{sec:chain} and \ref{sec:order}
and 
show the correspondence of sets of elements mentioned above and 
these primes.
Although this correspondence is not one to one in the case of $\msCCC(P)$, 
we show that among prime ideals  
corresponding to sets 
of elements satisfying the condition above, the maximum of dimensions of 
these prime ideals is the dimension of the prime ideal that corresponds 
to the sequence above in the case of $\msOOO(P)$.
In particular, we
show that the dimensions of non-\gor\ loci of $\ekop$ and $\ekcp$ are the same.

\section{Preliminaries}

\mylabel{sec:pre}

In this paper, all rings and algebras are assumed to 
be commutative 
with an identity element 
unless stated otherwise.
Also, when discussing properties of Noetherian rings, such as \cm\ or \gor\ properties,
we assume the rings under consideration are Noetherian.

First, we 
fix some notation we will use throughout the paper.
We denote the set of nonnegative integers, 
the set of integers, 
the set of rational numbers and 
the set of real numbers
by $\NNN$, $\ZZZ$, $\QQQ$ and $\RRR$ respectively.
We denote the cardinality of a set $X$ by $\#X$.
For sets $X$ and $Y$, we define $X\setminus Y\define\{x\in X\mid x\not\in Y\}$.
For nonempty sets $X$ and $Y$, we denote the set of maps from $X$ to $Y$ by $Y^X$.
If $X$ is a finite set, we identify $\RRR^X$ with the Euclidean space 
$\RRR^{\#X}$.
Note that by this identification, if $Y\subset X$, then the projection
$\RRR^X\to\RRR^Y$ corresponds to the restriction.
If $Y\subset X$,
we identify $\RRR^Y$ with $\{f\in\RRR^X\mid f(x)=0$ for $x\in X\setminus Y\}$. 
For a nonempty subset $\msXXX$ of $\RRR^X$, we denote by $\aff\msXXX$ the affine span of 
$\msXXX$, 
and by $\relint\msXXX$
the interior of $\msXXX$ in the topological space
$\aff\msXXX$.
Let $A$ be a subset of $X$.
We define the characteristic function $\chi_A\in\RRR^X$ by
$\chi_A(x)=1$ for $x\in A$ and $\chi_A(x)=0$ for $x\in X\setminus A$.
In order to clarify the domain of the characteristic function, we sometimes denote 
$\chi_A$ by $\chi_A^X$.
For $\xi$, $\xi'\in\RRR^X$ and $a\in \RRR$,
we define
maps $\xi\pm\xi'$ and $a\xi$ 
by
$(\xi\pm\xi')(x)=\xi(x)\pm\xi'(x)$ and
$(a\xi)(x)=a(\xi(x))$
for $x\in X$.
For $\xi\in\RRR^X$, we set $\supp\xi\define\{x\in X\mid\xi(x)\neq 0\}$.

Now we define a symbol which is frequently used in this paper.

\begin{definition}
\mylabel{def:xi+}
\rm
Let $X$ be a finite set and $\xi\in\RRR^X$.
For $B\subset X$, we set $\xip(B)\define\sum_{b\in B}\xi(b)$.
We define the empty sum to be 0, i.e., if $B=\emptyset$, then $\xi^+(B)=0$.
\end{definition}
We also define $\sum_{\ell=i}^ {i-1}a_\ell=0$.
Then $\sum _{\ell=i}^{j-1}a_\ell+\sum_{\ell=j}^{k-1}a_\ell=\sum_{\ell=i}^{k-1}a_\ell$ 
for any integers $i$, $j$ and $k$ with $i\leq j\leq k$.
We do not use symbol $\sum_{\ell=i}^j a_\ell$ if $j<i-1$.
Note that if $X$ is a finite set, $B$ is a subset of $X$ and $a\in\RRR$, then
$(\xi\pm\xi')^+(B)=\xip(B)\pm(\xi')^+(B)$
and $(a\xi)^+(B)=a(\xip(B))$.

Next we recall some definitions concerning finite partially
ordered sets (poset for short).
Let $Q$ be a finite poset.
(In the main argument, we use 
the symbol $P$ to indicate a poset but when discussing general
theory on posets, we use other symbols.)
We denote the set of maximal (resp.\ minimal) elements of $Q$
by $\max Q$ (resp.\ $\min Q$).
If $\max Q$ (resp.\ $\min Q$) consists of one element $z$, we often
abuse notation and write $z=\max Q$ (resp. $z=\min Q$).
A chain in $Q$ is a totally ordered subset of $Q$ or an empty set.
For a chain $C$ in $Q$, we define the length of $C$ as $\#C-1$.
The maximum length of chains in $Q$ is called the rank of $Q$ and denoted by $\rank Q$.
If every maximal (with respect to the inclusion relation) 
chain of $Q$ has the same length, we say that $Q$ is pure.
For a chain $C$ in $Q$, we set 
$$
\starcpx_Q(C)\define\{x\in Q\mid C\cup\{x\} \text{ is a
chain in }Q\}
$$
and
$$
\linkcpx_Q(C)\define\starcpx_Q(C)\setminus C.
$$ 
If either $a\leq b$ or $b\leq a$, we say that $a$ and $b$ are comparable and
denote 
this by $a\sim b$.
If $a$ and $b$ are not comparable, we say that $a$ and $b$ 
are incomparable and
denote 
this by $a\not\sim b$.
A subset $A$ of $Q$ is an antichain in $Q$ if every pair of two elements in $A$
are incomparable by the order of $Q$.
If a subset $I$ of $Q$ satisfies
$x\in I$, $y\in Q$, $y\leq x\Rightarrow y\in I$,
then we say that $I$ is a poset ideal of $Q$.

Let $\infty$ (resp.\ $-\infty$) be a new element which is not contained in $Q$.
We define a new poset $Q^+$ (resp.\ $Q^-$) whose base set is $Q\cup\{\infty\}$
(resp.\ $Q\cup\{-\infty\}$) and
$x<y$ in $Q^+$ (resp.\ $Q^-$) if and only if $x$, $y\in Q$ and $x<y$ in $Q$
or $x\in Q$ and $y=\infty$
(resp.\ $x=-\infty$ and $y\in Q$).
We set $Q^\pm\define(Q^+)^-$.
When treating multiple posets, we distinguish $\infty$ (resp.\ $-\infty$)
by adding subscript, like $\infty_Q$.

Let $Q'$ be an arbitrary poset. (We apply the following definition for
 $Q'=Q$, $Q^+$, $Q^-$ or $Q^\pm$.)
If $x$, $y\in Q'$, $x<y$ and there is no $z\in Q'$ with $x<z<y$,
we say that $y$ covers $x$ and denote by
$x\covered y$ or $y\covers x$.
For $x$, $y\in Q'$ with $x\leq y$, we set
$[x,y]_{Q'}\define\{z\in Q'\mid x\leq z\leq y\}$,
$[x,y)_{Q'}\define\{z\in Q'\mid x\leq z< y\}$ and
$(x,y]_{Q'}\define\{z\in Q'\mid x< z\leq y\}$.
Further, for $x$, $y\in Q'$ with $x<y$, we set
$(x,y)_{Q'}\define\{z\in Q'\mid x< z< y\}$.
We denote $[x,y]_{Q'}$ 
(resp.\ $[x,y)_{Q'}$, $(x,y]_{Q'}$ or $(x,y)_{Q'}$) 
as $[x,y]$ 
(resp.\ $[x,y)$, $(x,y]$ or $(x,y)$)
if there is no fear of confusion.
For $x\in Q$, we define the height (resp.\ coheight) of $x$ 
denoted by $\height x$ (resp. $\coht x$) by
$\height x\define \rank((-\infty,x])$
(resp.\ $\coht x\define\rank([x,\infty)$)).

\begin{definition}\rm
Let $Q'$ be an arbitrary finite poset and 
let $x$ and $y$ be elements of $Q'$ with $x\leq y$.
A saturated chain from $x$ to $y$ is a sequence of elements 
$z_0$, $z_1$, \ldots, $z_t$  of $Q'$ such that
$$
x=z_0\covered z_1\covered \cdots\covered z_t=y.
$$
\end{definition}
Note that the length of the chain $z_0$, $z_1$, \ldots, $z_t$ is $t$.

\begin{definition}\rm
Let $Q'$, $x$ and $y$ be as above.
We define 
$$\dist(x,y)\define\min\{t\mid \text{ there is a saturated chain from } x \text{ to } y
\text{ with length } t\}$$
and call $\dist(x,y)$ the distance of $x$ and $y$.
When treating multiple posets, we distinguish by adding a subscript, like $\dist_{Q'}$.
\end{definition}
Note that 
$\rank([x,y])=\max\{t\mid$ there is a saturated chain from $x$ to $y$ with length $t\}$.

Let $\KKK$ be a field and $R=\bigoplus_{n\in\NNN}R_n$ an
$\NNN$-graded $\KKK$-algebra with $R_0=\KKK$.
We denote the irrelevant maximal ideal $\bigoplus_{n>0}R_n$ by $\mmmm_R$
or simply $\mmmm$.
We call $\spec(R)\setminus\{\mmmm\}$ the punctured spectrum of $R$.

Let $R=\bigoplus_{n\in\NNN}R_n$ and $S=\bigoplus_{n\in\NNN}S_n$ be 
$\NNN$-graded $\KKK$-algebras with $R_0=S_0=\KKK$.
We denote by $R\#S$ the Segre product of $R$ and $S$:
$R\#S\define \bigoplus_{n\in\NNN}R_n\otimes_\KKK S_n$.

Next we fix notation about Ehrhart rings.
Let $X$ be a finite set with $-\infty\not\in X$ 
and $\msPPP$ a rational convex polytope in $\RRR^X$, i.e., 
a convex polytope whose vertices are contained in $\QQQ^X$.
Set $X^-\define X\cup\{-\infty\}$ and let $\{T_x\}_{x\in X^-}$ be
a family of indeterminates indexed by $X^-$.
For $f\in\ZZZ^{X^-}$, 
we denote the Laurent monomial 
$\prod_{x\in X^-}T_x^{f(x)}$ by $T^f$.
We set $\deg T_x=0$ for $x\in X$ and $\deg T_{-\infty}=1$.
Then the Ehrhart ring of $\msPPP$ over a field $\KKK$ is 
the toric ring corresponding to the cone over $\msPPP$.  
Namely, it is the $\NNN$-graded subring
$$
\KKK[T^f\mid f\in \ZZZ^{X^-}, f(-\infty)>0, \frac{1}{f(-\infty)}f|_X\in\msPPP]
$$
of the Laurent polynomial ring $\KKK[T_x^{\pm1}\mid x\in X][T_{-\infty}]$.
We denote the Ehrhart ring of $\msPPP$ over $\KKK$ by $\ekp$.
(We use $-\infty$ as the degree indicating element in order to be consistent with the
case of Hibi ring.)
If $\ekp$ is a standard graded algebra, i.e., generated as a $\KKK$-algebra by degree $1$
elements, we denote $\ekp$ by $\KKK[\msPPP]$.

It is known that the dimension (Krull dimension) of $\ekp$ is equal to $\dim \msPPP+1$.

Note that $\ekp$ is Noetherian since $\msPPP$ is rational,
and therefore it is normal by the criterion of Hochster \cite{hoc}.
Further, 
by the description of the canonical module of a normal affine semigroup ring
by Stanley \cite[p.\ 82]{sta2}, we see that the ideal
$$\bigoplus_{f\in\ZZZ^{X^-}, f(-\infty)>0, \frac{1}{f(-\infty)}f|_X\in\relint\msPPP}
\KKK T^f
$$
of $\ekp$ is the canonical module of $\ekp$.
We call this ideal the canonical ideal of $\ekp$.

Note that if $X_1$ and $X_2$ are disjoint finite sets and 
$\msPPP_1$ and $\msPPP_2$ are rational convex polytopes in 
$\RRR^{X_1}$ and $\RRR^{X_2}$ respectively, then $\msPPP_1\times\msPPP_2$
is a rational convex polytope in $\RRR^{X_1\cup X_2}$.
Moreover, $E_\KKK[\msPPP_1\times\msPPP_2]=E_\KKK[\msPPP_1]\#E_\KKK[\msPPP_2]$.

\medskip

From now on, we fix a finite poset $P$.
First we recall the definitions of order and chain polytopes.
\begin{definition}[{\cite{sta3}}]
\rm
We set
$$
\msOOO(P)\define
\left\{f\in\RRR^P\left|\ \vcenter{\hsize=.5\textwidth\relax\noindent
$0\leq f(x)\leq 1$ for any $x\in P$ and
if $x<y$ in $P$, then $f(x)\geq f(y)$}\right.\right\}
$$
and
$$
\msCCC(P)\define
\left\{f\in\RRR^P\left|\ \vcenter{\hsize=.5\textwidth\relax\noindent
$0\leq f(x)$ for any $x\in P$ and 
$f^+(C)\leq 1$ for any chain in $P$}\right.\right\}.
$$
$\msOOO(P)$ (resp.\ $\msCCC(P)$) is called the order (resp.\ chain) polytope
of $P$.
\end{definition}

The Ehrhart ring $\ekop$ of the order polytope of $P$ is identical with
the ring 
$\RRRRR_\KKK[\msIII(P)]$
considered by Hibi \cite{hib}, which is nowadays called the Hibi ring,
where $\msIII(P)$ is the lattice of poset ideals of $P$ ordered by inclusion.
In particular, $\ekop$ is a standard graded $\KKK$-algebra.
It is also known that the Ehrhart ring $\ekcp$ of the chain polytope is a
standard graded $\KKK$-algebra (see. e.g., \cite[Proposition 2.9]{mc}).

Suppose that $P$ is a disjoint union of posets $P_1$ and $P_2$, i.e.,
$P=P_1\cup P_2$, $P_1\cap P_2=\emptyset$ and
$x<y$ in $P$ if and only if there is $i$ with $x$, $y\in P_i$ and
$x<y$ in $P_i$.
Then $\msOOO(P)=\msOOO(P_1)\times\msOOO(P_2)$
(resp.\ $\msCCC(P)=\msCCC(P_1)\times\msCCC(P_2)$).
In particular, 
$\kop=\KKK[\msOOO(P_1)]\#\KKK[\msOOO(P_2)]$
(resp.\ $\kcp=\KKK[\msCCC(P_1)]\#\KKK[\msCCC(P_2)]$).

Next we make the following definition (cf. \cite{mc,mf}).

\begin{definition}
\rm
Let $n\in\ZZZ$.
We set
$$
\TTTTTn(P)\define
\left\{\nu\in\ZZZ^{P^-}\left|\
\vcenter{\hsize=.5\textwidth\relax\noindent
$\nu(z)\geq n$ for any $z\in\max P$ and
if $x\covered y$ in $P^-$, then $\nu(x)\geq\nu(y)+n$}\right.\right\}
$$
and
$$
\SSSSSn(P)\define
\left\{\xi\in\ZZZ^{P^-}\left|\
\vcenter{\hsize=.5\textwidth\relax\noindent
$\xi(x)\geq n$ for any $x\in P$ and 
$\xi(-\infty)\geq\xi^+(C)+n$ for any maximal chain $C$ in $P$}\right.\right\}.
$$
If there is no fear of confusion, we abbreviate $\TTTTT^{(n)}(P)$
(resp.\ $\SSSSS^{(n)}(P)$) as $\TTTTT^{(n)}$ (resp.\ $\SSSSS^{(n)}$).
Further, we set $\nu(\infty)=0$ and extend $\nu\in\TTTTTn$ 
(resp.\ $\xi\in\SSSSSn$) to $\ZZZ^{P^\pm}$.
\end{definition}
We have the following.

\begin{fact}[{\cite{mc,mf}}]
\mylabel{fact:mc,mf}
\begin{eqnarray*}
\kop&=&\bigoplus_{\nu\in\TTTTT^{(0)}(P)}\KKK T^\nu,\\
\omega_\kop&=&\bigoplus_{\nu\in\TTTTT^{(1)}(P)}\KKK T^\nu,
\quad
\omega^{-1}_\kop=\bigoplus_{\nu\in\TTTTT^{(-1)}(P)}\KKK T^\nu,\\
\kcp&=&\bigoplus_{\xi\in\SSSSS^{(0)}(P)}\KKK T^\xi,\\
\omega_\kcp&=&\bigoplus_{\xi\in\SSSSS^{(1)}(P)}\KKK T^\xi,
\quad\mbox{and}\quad
\omega^{-1}_\kcp=\bigoplus_{\nu\in\SSSSS^{(-1)}(P)}\KKK T^\xi,\\
\end{eqnarray*}
where $I^{-1}\define\{x\in{Q(R)}\mid xI\subset R\}$ for a ring $R$ with total quotient ring $Q(R)$
and an ideal $I$ of $R$.
\end{fact}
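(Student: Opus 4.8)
\medskip\noindent\textbf{Proof proposal.}
All six descriptions come from comparing two presentations of the same set of lattice points, so the plan is essentially to set up a dictionary between the inequalities cutting out $\msOOO(P)$ (resp.\ $\msCCC(P)$) and the covering- and chain-conditions defining $\TTTTT^{(n)}(P)$ and $\SSSSS^{(n)}(P)$, and to feed it into the explicit form of the Ehrhart ring and of its canonical ideal (Stanley's description) recalled above.

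I would begin with the order polytope. Fix $\nu\in\ZZZ^{P^-}$, put $m=\nu(-\infty)$, and extend $\nu$ to $P^\pm$ by $\nu(\infty)=0$ as in the definition of $\TTTTT^{(n)}$. If $m>0$, then $T^\nu\in\ekop$ if and only if $\frac1m\nu|_P\in\msOOO(P)$, i.e.\ $0\le\nu(x)\le m$ for every $x\in P$ together with $\nu(x)\ge\nu(y)$ whenever $x<y$ in $P$. Each of these requirements decomposes along a saturated chain in $P^\pm$ (from $x$ up to $\infty$ for $\nu(x)\ge0$, from $-\infty$ down to $x$ for $\nu(x)\le m$, and along a saturated chain from $x$ to $y$ for $\nu(x)\ge\nu(y)$), so the whole system is equivalent to requiring $\nu(a)\ge\nu(b)$ for every covering relation $a\covered b$ of $P^\pm$; with $m>0$ this is precisely $\nu\in\TTTTT^{(0)}(P)$, while $m=0$ forces $\nu\equiv0$ on both sides. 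This gives the first line. For $\omega_\kop$, recall it is spanned by the $T^\nu$ with $m=\nu(-\infty)>0$ and $\frac1m\nu|_P\in\relint\msOOO(P)$; since $\msOOO(P)$ is full-dimensional, $\relint\msOOO(P)=\interior\msOOO(P)$, where the facet inequalities $f(x)\ge0\ (x\in\max P)$, $f(x)\le1\ (x\in\min P)$ and $f(x)\ge f(y)\ (x\covered y$ in $P)$ all hold strictly; scaling by $m$ and using $\nu(\infty)=0$ this becomes $\nu(a)\ge\nu(b)+1$ for every cover $a\covered b$ of $P^\pm$, i.e.\ $\nu\in\TTTTT^{(1)}(P)$.

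The chain polytope is treated verbatim once one recalls that $\msCCC(P)$ is cut out by $f(x)\ge0\ (x\in P)$ and $f^+(C)\le1$ for the maximal chains $C$ of $P$, and that these are exactly its facets (an arbitrary chain constraint follows from a maximal one since $f\ge0$, and $\msCCC(P)$ is again full-dimensional): reading off the lattice points and the interior points exactly as above yields $\kcp=\bigoplus_{\xi\in\SSSSS^{(0)}(P)}\KKK T^\xi$ and $\omega_\kcp=\bigoplus_{\xi\in\SSSSS^{(1)}(P)}\KKK T^\xi$.

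Finally I would identify the inverse canonical modules. Since $\omega_\kop$ is a monomial ideal in the normal toric domain $\ekop$, the fractional ideal $\omega^{-1}_\kop$ is again spanned by Laurent monomials, and $T^\mu\in\omega^{-1}_\kop$ if and only if $\mu+\nu\in\TTTTT^{(0)}(P)$ for every $\nu\in\TTTTT^{(1)}(P)$. That $\mu\in\TTTTT^{(-1)}(P)$ implies this is immediate from adding defining inequalities. For the converse, given a cover $a\covered b$ of $P^\pm$ I would exhibit a single $\nu\in\TTTTT^{(1)}(P)$ with $\nu(a)-\nu(b)=1$ --- take a linear extension of $P^\pm$ in which $a$ and $b$ are consecutive (which exists because $a\covered b$) and let $\nu$ send each element to $\#P^\pm$ minus its position in that extension --- so that $\mu+\nu\in\TTTTT^{(0)}(P)$ forces $\mu(a)\ge\mu(b)-1$, and ranging over all covers gives $\mu\in\TTTTT^{(-1)}(P)$. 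The same computation with $\SSSSS^{(n)}$ in place of $\TTTTT^{(n)}$ yields $\omega^{-1}_\kcp$, the only difference being that the witness in $\SSSSS^{(1)}(P)$ tight on a prescribed maximal-chain constraint $\xi(-\infty)=\xi^+(C)+1$ (and slack on every other such constraint) is produced by giving a large weight to the elements of that chain $C$ and weight $1$ to all other elements of $P$. The point needing the most care is exactly this construction of witnesses: it is what makes the intersection over $\nu\in\TTTTT^{(1)}(P)$ (resp.\ $\xi\in\SSSSS^{(1)}(P)$) of the conditions ``$\mu+\nu\in\TTTTT^{(0)}$'' collapse to precisely the $(-1)$-shifted system rather than to something larger; everything else is routine bookkeeping with saturated chains.
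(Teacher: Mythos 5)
This statement is quoted as a Fact from the cited references [Miy2, Miy3], so the paper contains no proof of its own to compare against. Your argument is correct and is essentially the standard derivation one finds in those sources: translate the defining (facet) inequalities of $\msOOO(P)$ and $\msCCC(P)$ into the cover/chain conditions defining $\TTTTTn(P)$ and $\SSSSSn(P)$, use Stanley's description of the canonical ideal (interior points are exactly those satisfying every defining inequality strictly, since both polytopes are full-dimensional), and for the inverses produce, for each individual constraint, a witness in $\TTTTT^{(1)}$ (via a linear extension making a prescribed cover adjacent) or in $\SSSSS^{(1)}$ (via a heavily weighted maximal chain) that is tight on that constraint alone; these witnesses do exist as you describe, and they are indeed the only point requiring care.
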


We also make the following definition.

\begin{definition}
\rm
Let $\xi\in \ZZZ^P$, 
$\xi\in \ZZZ^{P^-}$, 
$\xi\in \ZZZ^{P^+}$ or
$\xi\in \ZZZ^{P^\pm}$ and $n\in \ZZZ$.
We set
$C_\xi^{[n]}\define\{ C\mid C$ is a maximal chain in $P$ and 
$\xip(C)=n\}$.
\end{definition}
Note that if $\xi\in\SSSSS^{(m)}$ and $\xi(-\infty)=d$, then $C_\xi^{[n]}=\emptyset$
 for any $n$ with $n>d-m$.

Now we recall the following.

\begin{fact}[{\cite[\S3 d)]{hib}}]
\mylabel{fact:order gor}
$\kop$ is \gor\ if and only if $P$ is pure.
\end{fact}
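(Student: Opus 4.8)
The plan is to use the description of the canonical module of $\kop$ recorded in Fact \ref{fact:mc,mf}, namely $\omega_\kop=\bigoplus_{\nu\in\TTTTT^{(1)}(P)}\KKK T^\nu$ and $\kop=\bigoplus_{\nu\in\TTTTT^{(0)}(P)}\KKK T^\nu$. Since $\kop$ is a normal \cm\ graded domain, it is \gor\ if and only if $\omega_\kop$ is a principal ideal, and since $\omega_\kop$ is a monomial ideal in the toric description, this happens if and only if there is a single $\nu_0\in\TTTTT^{(1)}(P)$ such that every $\nu\in\TTTTT^{(1)}(P)$ satisfies $\nu-\nu_0\in\TTTTT^{(0)}(P)$; equivalently, $\TTTTT^{(1)}(P)$ has a unique minimal element with respect to the componentwise partial order inherited via the $\TTTTT^{(0)}$-action. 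So the whole statement reduces to a combinatorial fact about the polyhedron $\TTTTT^{(1)}(P)$.

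First I would translate the defining inequalities of $\TTTTT^{(n)}(P)$ into statements about saturated chains in $P^-$: for $\nu\in\TTTTT^{(n)}(P)$ and any saturated chain $-\infty=z_0\covered z_1\covered\cdots\covered z_t=x$ one gets $\nu(-\infty)-\nu(x)\geq nt$, and taking the longest such chain, $\nu(-\infty)\geq\nu(x)+n\,\height x$; moreover for $z\in\max P$ the constraint forces, chaining up to $\infty$, that $\nu(-\infty)\geq n\cdot(\text{length of a maximal chain through }z)$. The natural candidate for a generator is $\nu_0$ defined by $\nu_0(x)=\height x$ for a suitable normalization (so that $\nu_0(-\infty)=\rank P^-$ or similar); I would check $\nu_0\in\TTTTT^{(1)}(P)$ directly from the covering description. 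Then, assuming $P$ is pure, I would show that for any $\nu\in\TTTTT^{(1)}(P)$ the difference $\nu-\nu_0$ lies in $\TTTTT^{(0)}(P)$: purity guarantees that every maximal chain has the same length, so the "top" constraints at the maximal elements and the degree component $\nu(-\infty)$ all align, and the covering inequalities for $\nu-\nu_0$ follow by subtracting those for $\nu$ and $\nu_0$. Conversely, if $P$ is not pure, I would exhibit two maximal chains of different lengths and use them to produce two incomparable minimal elements of $\TTTTT^{(1)}(P)$ (roughly, one can push $\nu$ down along a short maximal chain in a way incompatible with pushing down along a long one), showing $\omega_\kop$ needs at least two generators.

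The main obstacle I expect is the converse direction: carefully constructing, from a failure of purity, two distinct minimal generators of $\omega_\kop$ and verifying they are genuinely incomparable in the monomial order — this requires choosing the values of $\nu$ on the "branch" elements so that neither candidate dominates the other, and checking all covering inequalities remain satisfied. The pure direction, by contrast, should be a routine inequality-chase once the candidate generator $\nu_0(x)=\height x$ is in hand. An alternative, cleaner route for the forward direction would be to invoke the known combinatorial formula for the $a$-invariant / canonical module generators of a Hibi ring (Hibi's original computation), but since this is stated as a recalled Fact with a citation to \cite[\S3 d)]{hib}, I would simply cite it; the sketch above is how one would reprove it from the tools developed in this section.
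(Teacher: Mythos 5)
The paper offers no proof of this statement: it is recalled as a Fact and attributed to Hibi \cite[\S3 d)]{hib}, so there is nothing internal to compare your argument against, and your closing remark that one would simply cite it is exactly what the paper does. As a reconstruction of Hibi's result from the toric description in Fact \ref{fact:mc,mf}, your outline is essentially the standard argument and the reduction is sound: $\kop$ is \gor\ iff the canonical ideal $\bigoplus_{\nu\in\TTTTT^{(1)}}\KKK T^\nu$ is principal over $\bigoplus_{\nu\in\TTTTT^{(0)}}\KKK T^\nu$, iff $\TTTTT^{(1)}=\nu_0+\TTTTT^{(0)}$ for a single $\nu_0$. One correction to your candidate generator: with the paper's conventions an element of $\TTTTT^{(n)}$ satisfies $\nu(x)\geq\nu(y)+n$ for $x\covered y$, i.e.\ it \emph{decreases} going up the poset, so the pointwise-minimal element of $\TTTTT^{(1)}$ is $\nu_0(x)=\coht x=\rank([x,\infty))$, not $\height x$ (your parenthetical ``suitable normalization'' suggests you sensed this, but as written $\height$ has the wrong monotonicity). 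Purity is then precisely the statement that $\coht x=\coht y+1$ for every cover, which is what makes $\nu-\nu_0\in\TTTTT^{(0)}$ for all $\nu\in\TTTTT^{(1)}$. Your converse direction is only a sketch — you still have to produce, from a cover $x\covered y$ with $\coht x>\coht y+1$, an element $\nu\in\TTTTT^{(1)}$ with $\nu-\coht\notin\TTTTT^{(0)}$ (the height-based function $\nu(x)=\rank(P^\pm)-\rank([-\infty,x])$ does the job) — but this is fillable and is the argument in Hibi's paper; there is no genuine gap in the plan.
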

By using this fact and results of Stanley we see the following.

\begin{thm}
\mylabel{thm:chain gor}
$\kcp$ is \gor\ if and only if $P$ is pure.
\end{thm}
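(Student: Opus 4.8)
The plan is to reduce the statement to the known characterization of the Gorenstein property in terms of the canonical module, and then exploit the fact that $\ekop$ and $\ekcp$ have the same Hilbert function together with Fact~\ref{fact:order gor}. Concretely, a Cohen-Macaulay graded domain $R$ is Gorenstein if and only if its canonical module $\omega_R$ is principal, equivalently (since $R$ is normal here, so $\omega_R$ is a divisorial ideal) if and only if the class of $\omega_R$ is trivial in the divisor class group; for a normal affine semigroup ring one can detect this via the $a$-invariant and the $h$-vector. Since $\ekop$ and $\ekcp$ are both standard graded, both Cohen-Macaulay (normal toric rings are Cohen-Macaulay by Hochster), and share the same Hilbert series, they have the same $h$-vector and the same $a$-invariant. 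By a result of Stanley, a Cohen-Macaulay standard graded domain which is a level ring (or more specifically a normal semigroup ring) is Gorenstein if and only if its $h$-vector is symmetric. Thus $\ekcp$ is Gorenstein iff its $h$-vector is symmetric iff the $h$-vector of $\ekop$ is symmetric iff (again by Stanley's criterion applied to $\ekop$) $\ekop$ is Gorenstein iff $P$ is pure, the last equivalence being Fact~\ref{fact:order gor}.

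The steps, in order, are: (1) record that $\ekcp$ is a normal Cohen-Macaulay standard graded $\KKK$-algebra and a domain — normality from Hochster's criterion as already noted in the preliminaries, standard gradedness from \cite[Proposition 2.9]{mc}; (2) recall Stanley's theorem that for such a ring Gorenstein is equivalent to the symmetry of the $h$-vector; (3) invoke the equality of Hilbert functions of $\ekop$ and $\ekcp$ (stated in the introduction, from \cite{hib, hl}), which forces equality of $h$-vectors; (4) combine with Fact~\ref{fact:order gor} to conclude. An alternative, more self-contained route uses Fact~\ref{fact:mc,mf}: $\ekcp$ is Gorenstein iff $\omega_\kcp$ is generated by a single element $T^{\xi_0}$, i.e.\ iff there is $\xi_0\in\SSSSS^{(1)}(P)$ such that $\SSSSS^{(1)}(P)=\xi_0+\SSSSS^{(0)}(P)$; one then checks this holds exactly when $P$ is pure by producing the explicit generator (on a pure poset of rank $r$, take $\xi_0$ with $\xi_0(x)=1$ for all $x\in P$ and $\xi_0(-\infty)=r+1$) and, conversely, showing that if $P$ is not pure the two semigroups cannot differ by a single translate. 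I would present the Hilbert-series argument as the main line since it is shortest and most robust, mentioning the semigroup computation as the mechanism behind it if needed.

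The main obstacle is making precise the bridge in step (2)–(3): one must ensure that "same Hilbert function" genuinely transfers the Gorenstein property, which requires that the relevant criterion depends only on numerical data. This is clean for standard graded Cohen-Macaulay domains via the symmetry of the $h$-vector, but one should be careful that both rings are indeed standard graded (so that the $h$-vector is the right invariant) — this is why invoking the standard gradedness of $\ekcp$ explicitly is essential, since without it the naive $h$-vector comparison could fail. If one instead takes the semigroup route, the obstacle shifts to the "only if" direction: showing that non-purity of $P$ obstructs $\SSSSS^{(1)}(P)$ being a single translate of $\SSSSS^{(0)}(P)$, which amounts to finding two maximal chains $C$, $C'$ of different lengths and constructing an element of $\SSSSS^{(1)}(P)$ that is "tight" along $C$ but forces the putative generator's $-\infty$-coordinate to be too large along $C'$, contradicting minimality — essentially the chain-polytope analogue of Hibi's original argument for Fact~\ref{fact:order gor}. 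Either way the heart of the matter is already contained in Fact~\ref{fact:order gor} plus the Hilbert-function coincidence, so the proof should be short.
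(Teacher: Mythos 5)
Your main line of argument is exactly the paper's proof: the Hilbert functions of $\kcp$ and $\kop$ coincide (Stanley, \emph{Two poset polytopes}), so by Stanley's numerical criterion for Gorensteinness of Cohen--Macaulay graded domains (\cite[Theorem 4.4]{sta2}) the two rings are Gorenstein simultaneously, and Fact~\ref{fact:order gor} finishes. The proposal is correct and essentially identical to the paper's argument.
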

\begin{proof}
By \cite[Theorem 4.1]{sta3}, we see that the Hilbert functions of $\kcp$ and
$\kop$ are identical.
Therefore, by \cite[Theorem 4.4]{sta2}, we see that $\kcp$ is \gor\
if and only if so is $\kop$.
Further, we see by Fact \ref{fact:order gor} that $\kop$ is \gor\ if and only if
$P$ is pure.
Thus, we see the result.
\end{proof}

Next we note the following fact whose proof is easy but very
useful in the argument of chain polytopes.

\begin{lemma}
\mylabel{lem:cross chain}
Let $\nu\in\RRR^P$ and $D$ a nonempty set consisting of chains in $P$.
Set $M\define\max\{\nu^+(C)\mid C\in D\}$
and $m\define\min\{\nu^+(C)\mid C\in D\}$.
If $C_1$, $C_2\in D$,
$\nu^+(C_1)=\nu^+(C_2)=m$
(resp.\ $\nu^+(C_1)=\nu^+(C_2)=M$),
$z\in C_1\cap C_2$
and
$C_3\define((-\infty,z]\cap C_1)\cup((z,\infty)\cap C_2)$,
$C_4\define((-\infty,z]\cap C_2)\cup((z,\infty)\cap C_1)$
are elements of $D$, then
$$
\nu^+(C_3)=\nu^+(C_4)=m
$$
(resp.\ $\nu^+(C_3)=\nu^+(C_4)=M$),
\begin{eqnarray*}
&&\nu^+((-\infty,z]\cap C_1)=\nu^+((-\infty,z]\cap C_2)\\
\mbox{and}\\
&&\nu^+([z,\infty)\cap C_1)=\nu^+([z,\infty)\cap C_2).
\end{eqnarray*}
\end{lemma}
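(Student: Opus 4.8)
The plan is to reduce everything to the additivity of $\nu^+$ over disjoint unions, combined with the extremality of $m$ (resp.\ $M$); the argument is then very short.

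First I would note that, since $z\in C_1$ and $C_1$ is a chain, every element of $C_1$ is comparable with $z$, so $C_1$ is the disjoint union of $(-\infty,z]\cap C_1$ and $(z,\infty)\cap C_1$, and likewise for $C_2$. Put $a_i\define\nu^+((-\infty,z]\cap C_i)$ and $b_i\define\nu^+((z,\infty)\cap C_i)$ for $i=1,2$. Since $\nu^+$ of a set is just the sum of $\nu$ over its elements, it is additive on disjoint unions, and because $C_3$ and $C_4$ are, by their very definition, disjoint unions of an ``$\leq z$ part'' of one chain and a ``$>z$ part'' of the other, we get
\[
\nu^+(C_1)=a_1+b_1,\quad \nu^+(C_2)=a_2+b_2,\quad \nu^+(C_3)=a_1+b_2,\quad \nu^+(C_4)=a_2+b_1 .
\]

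Next, in the minimum case I would add the last two identities:
\[
\nu^+(C_3)+\nu^+(C_4)=(a_1+b_2)+(a_2+b_1)=(a_1+b_1)+(a_2+b_2)=\nu^+(C_1)+\nu^+(C_2)=2m .
\]
Because $C_3,C_4\in D$ we have $\nu^+(C_3)\geq m$ and $\nu^+(C_4)\geq m$, and two numbers each at least $m$ whose sum is $2m$ must both equal $m$; hence $\nu^+(C_3)=\nu^+(C_4)=m$. Substituting back, $a_1+b_2=m=a_1+b_1$ forces $b_1=b_2$, i.e.\ $\nu^+((z,\infty)\cap C_1)=\nu^+((z,\infty)\cap C_2)$, and adding $\nu(z)$ to both sides (legitimate since $z\in C_1\cap C_2$) gives $\nu^+([z,\infty)\cap C_1)=\nu^+([z,\infty)\cap C_2)$; symmetrically $a_1=a_2$ yields $\nu^+((-\infty,z]\cap C_1)=\nu^+((-\infty,z]\cap C_2)$. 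The maximum case is word-for-word the same with every inequality reversed and $2m$ replaced by $2M$.

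There is no real obstacle here. The only points that need a sentence of care are that $\nu^+$ is additive over disjoint unions (immediate from the definition) and that, because $z$ lies on each of the chains $C_1,C_2$, these decompose as the stated disjoint unions; the rest is the elementary observation about a sum of two numbers equal to twice the extremal value. Accordingly I would keep the written-out proof to the few lines sketched above.
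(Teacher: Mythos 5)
Your proof is correct and follows essentially the same route as the paper: sum the two inequalities $\nu^+(C_3)\geq m$, $\nu^+(C_4)\geq m$, observe the total equals $\nu^+(C_1)+\nu^+(C_2)=2m$, and force equality throughout, which also yields $a_1=a_2$ and $b_1=b_2$. Your extra remark about adding $\nu(z)$ to pass from $(z,\infty)$ to $[z,\infty)$ is a small tidiness the paper glosses over, but the argument is the same.
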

\begin{proof}
We first prove the case where $\nu^+(C_1)=\nu^+(C_2)=m$.
Since $C_3$, $C_4\in D$, we see that
\begin{eqnarray}
&&
\nu^+((-\infty,z]\cap C_1)+\nu^+((z,\infty)\cap C_2)=\nu^+(C_3)\geq m
\nonumber
\\
\mbox{and}
\mylabel{eq:c3c4geq}
\\
&&
\nu^+((-\infty,z]\cap C_2)+\nu^+((z,\infty)\cap C_1)=\nu^+(C_4)\geq m.
\nonumber
\end{eqnarray}
Thus,
$\nu^+((-\infty,z]\cap C_1)+\nu^+((z,\infty)\cap C_2)+
\nu^+((-\infty,z]\cap C_2)+\nu^+((z,\infty)\cap C_1)\geq 2m$.
On the other hand, by the assumption, we see that
\begin{eqnarray*}
&&
\nu^+((-\infty,z]\cap C_1)+\nu^+((z,\infty)\cap C_1)=\nu^+(C_1)= m
\\
\mbox{and}
\\
&&
\nu^+((-\infty,z]\cap C_2)+\nu^+((z,\infty)\cap C_2)=\nu^+(C_2)= m.
\end{eqnarray*}
Therefore,
$\nu^+((-\infty,z]\cap C_1)+\nu^+((z,\infty)\cap C_2)+
\nu^+((-\infty,z]\cap C_2)+\nu^+((z,\infty)\cap C_1)= 2m$.
Thus, we see that equalities hold in the ineqations \refeq{eq:c3c4geq},
\begin{eqnarray*}
&&\nu^+((-\infty,z]\cap C_1)=\nu^+((-\infty,z]\cap C_2)\\
\mbox{and}\\
&&\nu^+((z,\infty)\cap C_1)=\nu^+((z,\infty)\cap C_2).
\end{eqnarray*}
Thus the result follows.
The case where $\nu^+(C_1)=\nu^+(C_2)=M$ is proved similarly.
\end{proof}
We also note the following fact.

\begin{lemma}
\mylabel{lem:sum0}
Let $\eta\in \SSSSS^{(1)}$ and $\zeta\in\SSSSS^{(-1)}$.
Then for any $z\in P$ with $(\eta+\zeta)(z)=0$,
it holds that 
$\eta(z)=1$ and $\zeta(z)=-1$.
\end{lemma}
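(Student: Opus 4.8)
The plan is to read the conclusion off directly from the defining inequalities of $\SSSSS^{(1)}$ and $\SSSSS^{(-1)}$, so there is essentially nothing beyond a single comparison of integers. Recall that $\eta\in\SSSSS^{(1)}$ forces $\eta(x)\geq 1$ for every $x\in P$, while $\zeta\in\SSSSS^{(-1)}$ forces $\zeta(x)\geq -1$ for every $x\in P$. Applying both bounds at the given element $z\in P$ gives $\eta(z)\geq 1$ and $\zeta(z)\geq -1$, hence $(\eta+\zeta)(z)=\eta(z)+\zeta(z)\geq 0$, with any failure of the two bounds to be tight making this inequality strict.

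Now invoke the hypothesis $(\eta+\zeta)(z)=0$. From $\eta(z)=-\zeta(z)$ and $\zeta(z)\geq -1$ we get $\eta(z)\leq 1$, which together with $\eta(z)\geq 1$ yields $\eta(z)=1$; substituting back gives $\zeta(z)=-1$. That completes the argument.

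I do not expect any genuine obstacle: the only point being used is that the constraints defining $\SSSSS^{(1)}$ and $\SSSSS^{(-1)}$ on the coordinates indexed by $P$ are exactly the bounds $\geq 1$ and $\geq -1$, respectively, and these two bounds pinch simultaneously precisely when their sum vanishes. (The maximal-chain inequalities $\xi(-\infty)\geq\xi^+(C)+n$, and the coordinate at $-\infty$, are irrelevant to this statement.) The lemma is worth isolating only because it will be applied repeatedly: whenever one has an element of the canonical ideal times an element of its inverse whose product lies in a prime generated by a subset of the $T_z$, this lemma pins down the corresponding exponents, which is the mechanism by which support considerations translate into combinatorial conditions on $P$ in the later sections.
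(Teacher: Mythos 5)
Your argument is correct and is essentially the paper's own proof: both simply combine the coordinate bounds $\eta(z)\geq 1$, $\zeta(z)\geq -1$ from the definitions of $\SSSSS^{(1)}$ and $\SSSSS^{(-1)}$ with the hypothesis $\eta(z)+\zeta(z)=0$ to force both bounds to be tight. Nothing further is needed.
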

\begin{proof}
Since $\eta\in\SSSSS^{(1)}$ and $\zeta\in\SSSSS^{(-1)}$,
we see that $\eta(z)\geq 1$ and $\zeta(z)\geq -1$.
By assumption, we see that 
$\eta(z)+\zeta(z)
=(\eta+\zeta)(z)=0$.
Thus we see the result.
\end{proof}

Finally, we recall the notion of the trace of a module.

\begin{definition}
\rm
Let $R$ be a ring and $M$ an $R$-module.
We set
$$
\trace(M)\define\sum_{\varphi\in\hom(M,R)}\varphi(M)
$$
and call the trace of $M$.
\end{definition}
Note that if $M\cong M'$, then $\trace(M)=\trace(M')$.
For an ideal with positive grade, we have the following.

\begin{fact}[{\cite[Lemma 1.1]{hhs}}]
\mylabel{fact:trace of an ideal}
If $I$ is an ideal of a Noetherian ring $R$ with $\grade I>0$, then
$\trace(I)=I^{-1}I$.
\end{fact}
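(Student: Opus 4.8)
The plan is to establish the two inclusions $I^{-1}I\subseteq\trace(I)$ and $\trace(I)\subseteq I^{-1}I$ separately. The first is elementary and requires no hypothesis on the grade; the second is where the assumption $\grade I>0$ enters, and it does so in order to guarantee a nonzerodivisor inside $I$.

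First I would verify $I^{-1}I\subseteq\trace(I)$. For each $x\in I^{-1}$, the definition $I^{-1}=\{x\in Q(R)\mid xI\subseteq R\}$ says precisely that $xb\in R$ for every $b\in I$, so the map $\mu_x\colon I\to R$, $b\mapsto xb$, is a well-defined element of $\hom(I,R)$. Hence $xI=\mu_x(I)\subseteq\trace(I)$, and summing over all $x\in I^{-1}$ gives $I^{-1}I\subseteq\trace(I)$.

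For the reverse inclusion I would use that, since $R$ is Noetherian, $\grade I>0$ is equivalent to $I$ not being contained in any associated prime of $R$, hence to $I$ containing a nonzerodivisor $a$; such an $a$ is invertible in the total quotient ring $Q(R)$. Given $\varphi\in\hom(I,R)$, set $x\define\varphi(a)a^{-1}\in Q(R)$. For any $b\in I$, $R$-linearity of $\varphi$ gives $a\varphi(b)=\varphi(ab)=b\varphi(a)$, and cancelling the nonzerodivisor $a$ in $Q(R)$ yields $\varphi(b)=xb$. Since $\varphi(b)\in R$ for every $b\in I$, we conclude $xI\subseteq R$, i.e.\ $x\in I^{-1}$, and therefore $\varphi(I)=xI\subseteq I^{-1}I$. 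Summing over all $\varphi\in\hom(I,R)$ gives $\trace(I)\subseteq I^{-1}I$, and combining the two inclusions completes the argument.

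The only genuinely essential point, and the sole place the hypothesis is used, is the production of the nonzerodivisor $a\in I$; after that the identification of $\hom(I,R)$ with ``multiplication by an element of $I^{-1}$'' is a routine diagram chase. I would also remark that $x=\varphi(a)a^{-1}$ is independent of the chosen nonzerodivisor $a$ — this is forced by applying the relation $\varphi(b)=xb$ to a second nonzerodivisor — although this independence is not needed for the proof itself.
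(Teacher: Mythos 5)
Your proof is correct; the paper states this as a Fact imported from \cite[Lemma 1.1]{hhs} without reproving it, and your argument (producing a nonzerodivisor $a\in I$ from $\grade I>0$ and showing every $\varphi\in\hom(I,R)$ is multiplication by $\varphi(a)a^{-1}\in I^{-1}$) is exactly the standard proof given in that reference.
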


The trace of the canonical module is especially important by the following
fact.

\begin{fact}[{\cite[Lemma 2.1]{hhs}}]
\mylabel{fact:nongor locus}
Let $R$ be a \cm\ local ring with a canonical module $\omega_R$ or an
$\NNN$-graded \cm\ ring with $R_0$ is a field and canonical module $\omega_R$.
Then for $\pppp\in\spec(R)$,
$$
R_\pppp\mbox{ is \gor}\iff\pppp\not\supset\trace(\omega_R).
$$
\end{fact}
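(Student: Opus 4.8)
The plan is to reduce the assertion to a purely local statement and then propagate it through localization. The local statement I would isolate is: \emph{a \cm\ local ring $S$ with canonical module $\omega_S$ is \gor\ if and only if $\trace(\omega_S)=S$}. To pass from this to the claim, I would first record that for a finitely generated module $M$ over a Noetherian ring $R$ the ideal $\trace(M)$ is the image of the evaluation map $M\otimes_R\hom(M,R)\to R$, so that, tensor products and $\hom(M,-)$ with $M$ finitely presented commuting with localization, one has $\trace(M)_\pppp=\trace_{R_\pppp}(M_\pppp)$ for every $\pppp\in\spec(R)$. I would combine this with the standard facts that when $R$ is \cm\ with canonical module $\omega_R$, the ring $R_\pppp$ is again \cm\ and $(\omega_R)_\pppp$ is a canonical module of $R_\pppp$. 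Granting the local statement at $S=R_\pppp$, one then gets that $R_\pppp$ is \gor\ if and only if $\trace(\omega_{R_\pppp})=R_\pppp$, if and only if $\trace(\omega_R)_\pppp=R_\pppp$, if and only if $\trace(\omega_R)\not\subset\pppp$, which is exactly the claim. No graded hypothesis on $\pppp$ is needed, since $R_\pppp$ is an ordinary local ring for every prime and the facts just invoked hold at arbitrary primes (in particular the $\NNN$-graded case of the claim is subsumed).

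For the local statement, one implication is immediate: if $S$ is \gor\ then $\omega_S\cong S$ and $\trace(S)=S$, as the identity endomorphism already has image $S$. For the converse, suppose $\trace(\omega_S)=S$; then $1=\sum_{i=1}^n\varphi_i(x_i)$ for suitable $\varphi_i\in\hom(\omega_S,S)$ and $x_i\in\omega_S$, so the $S$-linear map $\omega_S^n\to S$ sending $(y_1,\dots,y_n)$ to $\sum_i\varphi_i(y_i)$ is surjective; since $S$ is free this map splits, making $S$ a direct summand of $\omega_S^n$. Applying $\hom_S(-,\omega_S)$ to the splitting and using the standard fact that the natural map $S\to\hom_S(\omega_S,\omega_S)$ is an isomorphism for a \cm\ local ring with canonical module, we find that $\omega_S$ is a direct summand of a finite free module, hence projective, hence free. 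As $\omega_S$ is nonzero and indecomposable — its endomorphism ring $S$ has no idempotents other than $0$ and $1$ — it follows that $\omega_S\cong S$, i.e.\ $S$ is \gor.

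The only real work here is in assembling the structural input on canonical modules — good localization behaviour, descent of the \cm\ property, $\hom_S(\omega_S,\omega_S)=S$, and indecomposability of $\omega_S$ — and quoting it carefully enough to cover the $\NNN$-graded case with a possibly non-graded $\pppp$. For the rings actually used in this paper there is a more hands-on route that avoids the indecomposability step: $\ekop$ and $\ekcp$ are normal domains, hence reduced, so $\omega_R$ is isomorphic to an ideal $I$ of positive grade; Fact~\ref{fact:trace of an ideal} then gives $\trace(\omega_R)=I^{-1}I$, and $I^{-1}I\not\subset\pppp$ holds precisely when $I_\pppp$ is invertible, i.e.\ principal in $R_\pppp$, which is equivalent to $R_\pppp$ being \gor. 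In any case the statement follows; since it is quoted from \cite{hhs}, I would in the end simply cite it, but the above records a proof.
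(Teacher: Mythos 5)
Your argument is correct. Note, though, that the paper does not prove this statement at all: it is quoted as a Fact from \cite[Lemma 2.1]{hhs}, so there is no in-paper proof to compare against. What you have written is essentially the standard proof (and, up to presentation, the one in the cited reference): trace ideals commute with localization for finitely generated modules over a Noetherian ring, $(\omega_R)_\pppp$ is a canonical module of the \cm\ local ring $R_\pppp$ (also at non-graded primes in the graded case, which you rightly flag as the only delicate point), and the local statement follows from splitting the surjection $\omega_S^n\to S$, applying $\hom_S(-,\omega_S)$, and using $\hom_S(\omega_S,\omega_S)\cong S$ together with indecomposability to conclude $\omega_S\cong S$. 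Your alternative route via Fact \ref{fact:trace of an ideal} and invertibility of $I_\pppp$ is also valid for the normal toric rings actually used here. Since the result is imported from the literature, simply citing it, as the paper does, is the appropriate course; your write-up is a sound record of why it holds.
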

This fact shows that in the situation of Fact \ref{fact:nongor locus},
$V(\trace(\omega_R))\define\{\pppp\in\spec(R)\mid\pppp\supset\trace(\omega_R)\}$
is the non-\gor\ locus of $\spec(R)$,
i.e.,
$\trace(\omega_R)$ is a defining ideal of the non-\gor\ locus of $\spec(R)$.
Moreover, since $\omega_R$ is isomorphic to a height 1 ideal of $R$,
we can use Fact \ref{fact:trace of an ideal} to compute $\trace(\omega_R)$.
For a Noetherian ring $R$ and a prime ideal $\pppp$ in $R$, 
we denote $\dim R/\pppp$ by $\coht\pppp$, i.e., coheight of $\pppp$
in the poset of all prime ideals of $R$.
By Fact \ref{fact:nongor locus}, we see that non-\gor\ locus of a 
\cm\ local or graded ring is a closed subset of 
$\spec(R)$ with dimension $\max\{\coht\pppp\mid\pppp\supset\trace(\omega_R)\}$.


\section{The case of chain polytopes}

\mylabel{sec:chain}

In this section, we consider the trace of the canonical module of 
the Ehrhart ring $\kcp$ of the chain polytope of $P$.
First we note the following fact.

\begin{lemma}
\mylabel{lem:nonpure link}
Let $\xi\in\SSSSS^{(0)}$ and $d=\xi(-\infty)$.
If there exists a chain $C$ in $P$ such that
$\xi^+(C)=d$ and $\starcpx_P(C)$ is not pure,
then
$$
T^\xi\not\in\sqrt{\trace(\omega_{\kcp})}.
$$
\end{lemma}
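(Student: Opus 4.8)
The plan is to argue by contradiction: using the facts recalled above I will convert the hypothetical membership $T^\xi\in\sqrt{\trace(\omega_{\kcp})}$ into a single equation among lattice points, and then show that this equation is incompatible with the non-purity of $\starcpx_P(C)$.

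First I would unwind the trace. Since $\omega_{\kcp}$ is realized as a nonzero (height-one) ideal of the domain $\kcp$, Fact~\ref{fact:trace of an ideal} gives $\trace(\omega_{\kcp})=\omega_{\kcp}^{-1}\omega_{\kcp}$, and by Fact~\ref{fact:mc,mf} this product ideal is the $\KKK$-span of the Laurent monomials $T^{\eta+\zeta}$ with $\eta\in\SSSSS^{(1)}$ and $\zeta\in\SSSSS^{(-1)}$ (note $\eta+\theta\in\SSSSS^{(1)}$ for every $\theta\in\SSSSS^{(0)}$, so multiplying by ring elements produces no new exponents). As distinct Laurent monomials are $\KKK$-linearly independent, $T^\xi\in\sqrt{\trace(\omega_{\kcp})}$ forces $T^{k\xi}=(T^\xi)^k=T^{\eta+\zeta}$ for some integer $k\geq1$ and some $\eta\in\SSSSS^{(1)}$, $\zeta\in\SSSSS^{(-1)}$; equivalently $k\xi=\eta+\zeta$. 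I aim to refute this.

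Next I would extract the combinatorial input. Claim: the non-purity of $\starcpx_P(C)$ yields two maximal chains $C_1,C_2$ of $P$, each containing $C$, with $\#C_1\neq\#C_2$. To see this, note that any inclusion-maximal chain $E$ of the poset $\starcpx_P(C)$ must contain $C$ --- if $c\in C\setminus E$ then $c$ is comparable to every $e\in E$ (because $C\cup\{e\}$ is a chain), so $E\cup\{c\}$ is a strictly larger chain in $\starcpx_P(C)$ --- and must be a maximal chain of $P$, since anything extending $E$ to a chain of $P$ again lies in $\starcpx_P(C)$ (as $C\subseteq E$). Hence the two maximal chains of $\starcpx_P(C)$ of unequal length witnessing non-purity serve as $C_1$ and $C_2$. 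Because $\xi\in\SSSSS^{(0)}$ and $\xi^+(C)=d=\xi(-\infty)$, for $i=1,2$ we have $d=\xi^+(C)\leq\xi^+(C_i)\leq\xi(-\infty)=d$, so $\xi^+(C_i)=d$ and, the summands being nonnegative, $\xi(x)=0$ for every $x\in C_i\setminus C$.

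Finally I would combine these. For $x\in C_i\setminus C$ we have $(\eta+\zeta)(x)=(k\xi)(x)=0$, so Lemma~\ref{lem:sum0} gives $\eta(x)=1$, whence $\eta^+(C_i)=\eta^+(C)+(\#C_i-\#C)$. On the other hand, adding the defining inequalities $\eta(-\infty)\geq\eta^+(C_i)+1$ and $\zeta(-\infty)\geq\zeta^+(C_i)-1$ gives $kd=(k\xi)(-\infty)\geq(k\xi)^+(C_i)=k\,\xi^+(C_i)=kd$; hence both inequalities are equalities, and in particular $\eta^+(C_1)=\eta(-\infty)-1=\eta^+(C_2)$. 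Comparing this with the identity $\eta^+(C_i)=\eta^+(C)+(\#C_i-\#C)$ forces $\#C_1=\#C_2$, contradicting the choice of $C_1,C_2$, which proves the lemma. The step I expect to need the most care is the combinatorial claim of the third paragraph, and more precisely the simultaneous use of two rigidity facts about an arbitrary maximal chain $C'\supseteq C$ in this situation --- that $\eta$ must be identically $1$ on $C'\setminus C$, while $\eta^+(C')$ must be constant (equal to $\eta(-\infty)-1$) as $C'$ varies --- since it is exactly the tension between these that pins down $\#C'$; everything else is routine bookkeeping with the facts already in place.
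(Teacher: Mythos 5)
Your proposal is correct and follows essentially the same route as the paper: reduce membership in $\sqrt{\trace(\omega_{\kcp})}$ to an identity $N\xi=\eta+\zeta$ with $\eta\in\SSSSS^{(1)}$, $\zeta\in\SSSSS^{(-1)}$, take two maximal chains of $P$ through $C$ of different lengths coming from the non-purity of $\starcpx_P(C)$, use Lemma \ref{lem:sum0} to force $\eta=1$ on the elements outside $C$, and derive a contradiction from the defining inequalities of $\SSSSS^{(\pm1)}$. The only cosmetic differences are that you work with maximal chains of $\starcpx_P(C)$ rather than of $\linkcpx_P(C)$ and first extract the equalities $\eta^+(C_i)=\eta(-\infty)-1$ before comparing lengths, whereas the paper combines the $\eta$-inequality on the long chain with the $\zeta$-inequality on the short one directly.
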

\begin{proof}
Assume the contrary.
Then, by Facts \ref{fact:mc,mf} and \ref{fact:trace of an ideal},
there exist a positive integer $N$,
$\eta\in\SSSSS^{(1)}$ and $\zeta\in\SSSSS^{(-1)}$ such that
$\eta+\zeta=N\xi$.

Let 
$x_1<x_2<\cdots<x_t$ and $y_1<y_2<\cdots<y_s$ be maximal chains in 
$\linkcpx_P(C)$ with $s<t$.
Then
$C_1\define C\cup\{x_1$, \ldots, $x_t\}$
and
$C_2\define C\cup\{y_1$, \ldots, $y_s\}$
are maximal chains in $P$.
Since $\xi(x_i)=\xi(y_j)=0$ for any $i$ and $j$,
we see by Lemma \ref{lem:sum0}, that
$\eta(x_i)=\eta(y_j)=1$ and $\zeta(x_i)=\zeta(y_j)=-1$ for any $i$ and $j$.
Therefore,
$\eta^+(C_1)=\eta^+(C)+t$ and $\zeta^+(C_2)=\zeta^+(C)-s$.
Since $C_1$ and $C_2$ are maximal chains in $P$ and $\eta\in\SSSSS^{(1)}$
and $\zeta\in\SSSSS^{(-1)}$, we see that
$\eta^+(C_1)+1\leq\eta(-\infty)$ and
$\zeta^+(C_2)-1\leq\zeta(-\infty)$.
Therefore,
$$
\eta^+(C)+\zeta^+(C)+t-s\leq\eta(-\infty)+\zeta(-\infty)=N\xi(-\infty)=Nd.
$$

On the other hand, since
$$
\eta^+(C)+\zeta^+(C)=N\xi^+(C)=Nd,
$$
we see that
$Nd+t-s\leq Nd$.
This is a contradiction.
\end{proof}

By this lemma, we see the following fact.

\begin{thm}
\mylabel{thm:pspec gor}
$\kcp$ is \gor\ on the punctured spectrum if and only if $P$ is a disjoint union
of pure posets.
In particular, $\kcp$ is nearly \gor\ if and only if $P$ is the disjoint union
of pure connected posets $P_1$, \ldots, $P_t$ with
$|\rank P_i-\rank P_j|\leq 1$ for any $i$ and $j$.
\end{thm}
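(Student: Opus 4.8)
The plan is to prove Theorem~\ref{thm:pspec gor} in two parts, separating the ``punctured spectrum'' statement from the ``nearly \gor'' refinement, and to bootstrap everything off Lemma~\ref{lem:nonpure link}. For the first equivalence, recall that $\kcp$ is \gor\ on the punctured spectrum precisely when $\coht\pppp\leq 0$ for every prime $\pppp\supset\trace(\omega_\kcp)$, i.e.\ when $\sqrt{\trace(\omega_\kcp)}$ is either the whole ring or the graded maximal ideal $\mmmm$. Equivalently, every Laurent monomial $T^\xi$ with $\xi\in\SSSSS^{(0)}$ that lies in a proper prime lying over the trace must lie in $\mmmm$; the only monomials outside $\mmmm$ are the (invertible) degree-$0$ ones, so the substantive claim is: \emph{$T^\xi\in\sqrt{\trace(\omega_\kcp)}$ for every $\xi\in\SSSSS^{(0)}$ of positive degree if and only if $P$ is a disjoint union of pure posets.}

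For the ``if'' direction I would argue as follows. Suppose $P=P_1\sqcup\cdots\sqcup P_s$ with each $P_i$ pure. First handle the connected (pure) case directly: if $P$ itself is pure, then $\kcp$ is \gor\ by Theorem~\ref{thm:chain gor}, so $\trace(\omega_\kcp)=\kcp$ and there is nothing to prove. For the disjoint union, use the Segre-product decomposition $\kcp=\KKK[\msCCC(P_1)]\#\cdots\#\KKK[\msCCC(P_s)]$ recorded in Section~\ref{sec:pre}: each factor is \gor, and a standard fact (which I would cite, e.g.\ from \cite{hhs} or compute via the canonical module of a Segre product) gives that the Segre product of \gor\ standard graded $\KKK$-algebras is \gor\ on the punctured spectrum, with non-\gor\ locus exactly $\{\mmmm\}$ when $s\geq 2$. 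Thus $\sqrt{\trace(\omega_\kcp)}\supseteq\mmmm$ and every positive-degree $T^\xi$ lies in it. For the ``only if'' direction, suppose $P$ is \emph{not} a disjoint union of pure posets; I want to produce $\xi\in\SSSSS^{(0)}$ of positive degree with $T^\xi\notin\sqrt{\trace(\omega_\kcp)}$. The failure of ``disjoint union of pure posets'' means some connected component $P_0$ of $P$ is not pure, so $P_0$ contains a non-maximal maximal chain — concretely there is a chain $C$ in $P$ with $\starcpx_P(C)$ not pure (take $C$ to be a witnessing saturated chain, or even $C=\emptyset$ if $P$ itself is connected and non-pure). Now choose $\xi=\chi_{P}^{P^-}$ extended by $\xi(-\infty)=\rank P$ if that makes $\xi\in\SSSSS^{(0)}$ with $\xi^+(C)=\xi(-\infty)$; more carefully, I would pick $\xi$ supported so that the relevant chain $C$ is ``saturating,'' e.g.\ set $\xi(z)=1$ for $z$ on a maximal chain through $C$ and large enough $\xi(-\infty)$. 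Then Lemma~\ref{lem:nonpure link} applies verbatim and yields $T^\xi\notin\sqrt{\trace(\omega_\kcp)}$, with $\xi$ of positive degree, so $\sqrt{\trace(\omega_\kcp)}\neq\mmmm$ and the ring fails to be \gor\ on the punctured spectrum.

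For the ``nearly \gor'' refinement, recall (from the introduction and \cite{hhs}) that a \cm\ graded ring $R$ is nearly \gor\ iff $\trace(\omega_R)\supseteq\mmmm$ iff $R$ is \gor\ on the punctured spectrum and $R$ is \emph{not} of that borderline type ruled out by arithmetic of ranks — more precisely, nearly \gor\ is strictly stronger than ``\gor\ on the punctured spectrum'' only in that we need $\mmmm$ itself to lie over the trace (which it automatically does once the punctured spectrum is \gor\ but $R$ is not \gor). So once the first part is established, $\kcp$ nearly \gor\ $\iff$ $P=P_1\sqcup\cdots\sqcup P_t$ with each $P_i$ pure \emph{and} the additional numerical condition $|\rank P_i-\rank P_j|\leq 1$ holds. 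The extra rank condition is exactly what distinguishes nearly \gor\ from merely punctured-spectrum-\gor\ for Segre products of the $\KKK[\msCCC(P_i)]$: I would derive it by computing the $a$-invariants / initial degrees of the canonical modules $\omega_{\KKK[\msCCC(P_i)]}$, which by Fact~\ref{fact:mc,mf} are governed by $\SSSSS^{(1)}(P_i)$ and hence by $\rank P_i$, and then invoking the Segre-product trace computation in \cite{hhs} (this is precisely parallel to their Hibi-ring result, and via Theorem~\ref{thm:chain gor} the chain and order cases have the same Hilbert function and the same canonical-degree data). The main obstacle I anticipate is the bookkeeping in this last step: making the Segre-product trace computation precise enough to extract the sharp inequality $|\rank P_i - \rank P_j|\le 1$, rather than just ``\gor\ on the punctured spectrum.'' Everything else — the reduction to monomials, the application of Lemma~\ref{lem:nonpure link}, the choice of witnessing $\xi$ — is routine; the delicate point is matching the canonical-module grading data of $\KKK[\msCCC(P_i)]$ across components and quoting the Herzog--Hibi--Stamate Segre-product criterion in exactly the right form.
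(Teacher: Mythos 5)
Your overall strategy is the paper's: use Lemma~\ref{lem:nonpure link} to produce a positive-degree monomial outside $\sqrt{\trace(\omega_{\kcp})}$ when $P$ fails to be a disjoint union of pure posets, and quote the Herzog--Hibi--Stamate Segre-product results for the converse and for the nearly \gor\ refinement. But the step where you actually build the witnessing $\xi$ fails. Lemma~\ref{lem:nonpure link} requires a chain $C$ with $\starcpx_P(C)$ not pure \emph{and} $\xi^+(C)=\xi(-\infty)$. Both of your candidates violate the second condition: if $\xi$ is the indicator function of a maximal chain $D\supset C$ (or of all of $P$) and $\xi(-\infty)$ is ``large enough'' to put $\xi$ in $\SSSSS^{(0)}$, then $\xi(-\infty)\geq\#D>\#C=\xi^+(C)$, because $C$ is necessarily a proper subchain of $D$ (if $C$ were itself a maximal chain, $\starcpx_P(C)=C$ would be pure and there would be nothing to witness). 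The correct choice is the opposite of ``large'': take $\xi=\chi_C$ supported exactly on $C$ with $\xi(-\infty)=\#C$; then $\xi\in\SSSSS^{(0)}$, $\xi^+(C)=\#C=\xi(-\infty)$, and $T^\xi\in\mmmm$ since $\#C>0$. Your fallback $C=\emptyset$ is also no good: it gives $T^\xi=1$, a degree-zero element, whose absence from $\sqrt{\trace(\omega_{\kcp})}$ only says the ring is not \gor\ at $\mmmm$ and says nothing about the punctured spectrum. Finally, the passage from ``some connected component of $P$ is not pure'' to ``there is a \emph{nonempty} chain $C$ with $\starcpx_P(C)$ not pure'' is not immediate and you give no argument; the paper invokes \cite[Lemma 5.2]{hhs} for exactly this combinatorial fact.

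On the nearly \gor\ part, your framing is backwards: being \gor\ on the punctured spectrum gives $\sqrt{\trace(\omega_R)}\supseteq\mmmm$, not $\trace(\omega_R)\supseteq\mmmm$, so near-Gorensteinness does \emph{not} follow ``automatically,'' and the rank condition $|\rank P_i-\rank P_j|\leq 1$ is genuinely extra data distinguishing the two notions. Your plan to extract it from the $a$-invariants of the factors of the Segre product is workable in principle, but the paper sidesteps all of that bookkeeping by citing \cite[Theorem 4.15]{hhs}, which applies verbatim once one knows each $\KKK[\msCCC(P_i)]$ is \gor\ (Theorem~\ref{thm:chain gor}) and yields both the punctured-spectrum statement and the nearly \gor\ characterization in one stroke.
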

\begin{proof}
Suppose that there is a nonempty chain $C$ in $P$ such that $\starcpx_P(C)$ is not
pure.
Set
$$
\xi(z)=
\left\{
\begin{array}{ll}
1,&\quad z\in C,\\
0,&\quad z\in P\setminus C,\\
\#C,&\quad z=-\infty.
\end{array}
\right.
$$
Then, $\xi\in\SSSSS^{(0)}$.
Moreover, $T^\xi\not\in\sqrt{\trace(\omega_\kcp)}$ by Lemma \ref{lem:nonpure link}.
Since $\xi(-\infty)=\#C>0$, we see that $T^\xi\in\mmmm$.
Thus, we see that $\mmmm\not\subset\sqrt{\trace(\omega_{\kcp})}$.

By contraposition, we see that if $\kcp$ is \gor\ on the punctured spectrum, then $\starcpx_P(C)$ 
is pure for any nonempty chain $C$ in $P$.
By \cite[Lemma 5.2]{hhs}, we see that if $\starcpx_P(C)$ is pure for any nonempty chain $C$
in $P$, $P$ is a disjoint union of pure posets.
Conversely, if $P$ is a disjoint union of pure posets, then by Theorem \ref{thm:chain gor}
and \cite[Theorem 4.15]{hhs}, we see that $\kcp$ is \gor\ on the punctured spectrum.
Thus, we see the first part.

Moreover, by \cite[Theorem 4.15]{hhs}, we see that $\kcp$ is nearly \gor\ if and
only if 
$P$ is the disjoint union
of pure connected posets $P_1$, \ldots, $P_t$ with
$|\rank P_i-\rank P_j|\leq 1$ for any $i$ and $j$.
\end{proof}

Next we show the following fact.

\begin{lemma}
\mylabel{lem:cycle}
Let $\xi\in\SSSSS^{(0)}$ and $d=\xi(-\infty)$.
If there are nonempty chains
$C_1$, \ldots, $C_u$, $C'_1$, \ldots, $C'_u$ in $P$
such that
$\max C_1<\min C'_1>\max C_2<\min C'_2>\cdots>\max C_u<\min C'_u
>\max C_1$,
$\sum_{\ell=1}^{u}\rank([\max C_\ell,\min C'_\ell])
>\sum_{\ell=1}^{u-1}\dist(\max C_{\ell+1},\min C'_\ell)
+\dist(\max C_1,\min C'_u)$
and
$\xi^+(C_i)+\xi^+(C'_j)=d$ for any $i$ and $j$,
then
$$
T^\xi\not\in\sqrt{\trace(\omega_{\kcp})}.
$$
\end{lemma}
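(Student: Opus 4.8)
The plan is to run the argument of Lemma~\ref{lem:nonpure link} simultaneously along the given cycle. Suppose, for contradiction, that $T^\xi\in\sqrt{\trace(\omega_{\kcp})}$. Then $T^{N\xi}\in\trace(\omega_{\kcp})$ for some $N\geq1$, so by Facts~\ref{fact:mc,mf} and~\ref{fact:trace of an ideal} there are $\eta\in\SSSSS^{(1)}$ and $\zeta\in\SSSSS^{(-1)}$ with $\eta+\zeta=N\xi$. From $\xi^+(C_i)+\xi^+(C'_j)=d$ for all $i,j$ I would first record that $\xi^+(C_i)$ does not depend on $i$ and $\xi^+(C'_j)$ does not depend on $j$; write $a:=\xi^+(C_i)$, $b:=\xi^+(C'_j)$, so $a+b=d$. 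Abbreviate $p_\ell:=\max C_\ell$, $q_\ell:=\min C'_\ell$; the hypotheses give $p_\ell<q_\ell$, $p_{\ell+1}<q_\ell$ for $\ell<u$, and $p_1<q_u$.

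Next I would choose the connecting chains: for each $\ell$ a saturated chain $L_\ell$ from $p_\ell$ to $q_\ell$ of length $\rank([p_\ell,q_\ell])$, and, with $\sigma$ the cyclic shift ($\sigma(\ell)=\ell+1$ for $\ell<u$, $\sigma(u)=1$), a saturated chain $M_\ell$ from $p_{\sigma(\ell)}$ to $q_\ell$ of length $\dist(p_{\sigma(\ell)},q_\ell)$. Since $C_\ell\cup L_\ell\cup C'_\ell$ is a chain in $P$, its $\xi$-sum equals $a+b+\xi^+(\text{interior of }L_\ell)\leq d=a+b$, and $\xi\geq0$ on $P$, so $\xi$ vanishes on the interior of $L_\ell$, and likewise on the interior of $M_\ell$. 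I would then fix, for each $\ell$, a maximal chain $\Phi_\ell$ of $\{x\in P\mid x\leq p_\ell\}$ containing $C_\ell$ and a maximal chain $\Psi_\ell$ of $\{x\in P\mid x\geq q_\ell\}$ containing $C'_\ell$, and form the maximal chains of $P$
$$A_\ell:=\Phi_\ell\cup L_\ell\cup\Psi_\ell,\qquad B_\ell:=\Phi_{\sigma(\ell)}\cup M_\ell\cup\Psi_\ell .$$
Because $A_\ell$ is a maximal chain containing the disjoint chains $C_\ell$ and $C'_\ell$, whose $\xi$-sums already total $d$, one gets $\xi^+(A_\ell)=d$ and $\xi\equiv0$ on $A_\ell\setminus(C_\ell\cup C'_\ell)$; by Lemma~\ref{lem:sum0} this forces $\eta\equiv1$ and $\zeta\equiv-1$ there, and similarly on $B_\ell\setminus(C_{\sigma(\ell)}\cup C'_\ell)$.

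The computation then runs as follows. Writing $\eta^+(A_\ell)=\eta^+(C_\ell)+\eta^+(C'_\ell)+(\#A_\ell-\#C_\ell-\#C'_\ell)$ and $\zeta^+(B_\ell)=\zeta^+(C_{\sigma(\ell)})+\zeta^+(C'_\ell)-(\#B_\ell-\#C_{\sigma(\ell)}-\#C'_\ell)$ and summing over $\ell$, the pairs $\eta^+(C_\ell)+\zeta^+(C_\ell)=Na$ and $\eta^+(C'_\ell)+\zeta^+(C'_\ell)=Nb$ contribute $uNd$ in total, and the remaining terms collapse to $\sum_\ell(\#A_\ell-\#B_\ell)$. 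This is where the shared building blocks matter: each $\Phi_j$ occurs once in $A_j$ and once in $B_{\sigma^{-1}(j)}$, and each $\Psi_j$ once in $A_j$ and once in $B_j$, so they cancel and $\sum_\ell(\#A_\ell-\#B_\ell)=\sum_\ell\bigl(\rank([p_\ell,q_\ell])-\dist(p_{\sigma(\ell)},q_\ell)\bigr)=\sum_{\ell=1}^{u}\rank([p_\ell,q_\ell])-\sum_{\ell=1}^{u-1}\dist(p_{\ell+1},q_\ell)-\dist(p_1,q_u)>0$ by hypothesis. Hence $\sum_\ell\eta^+(A_\ell)+\sum_\ell\zeta^+(B_\ell)>uNd$. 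On the other hand $\eta\in\SSSSS^{(1)}$ and $\zeta\in\SSSSS^{(-1)}$ give $\eta(-\infty)\geq\eta^+(A_\ell)+1$ and $\zeta(-\infty)\geq\zeta^+(B_\ell)-1$; summing over $\ell$ and using $(\eta+\zeta)(-\infty)=N\xi(-\infty)=Nd$ yields $uNd\geq\sum_\ell\eta^+(A_\ell)+\sum_\ell\zeta^+(B_\ell)$, the desired contradiction.

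The delicate point, and the heart of the argument, is this bookkeeping: one must arrange $\Phi_\ell,\Psi_\ell$ as genuinely shared pieces so the tail contributions cancel exactly, leaving only $\sum\rank-\sum\dist$. The remaining checks are routine but need care: that $A_\ell$ and $B_\ell$ really are maximal chains of $P$ (using that $\Phi_\ell,\Psi_\ell$ are maximal in the principal ideal resp.\ filter and that $L_\ell,M_\ell$ are saturated), that $C_\ell$ and $C'_\ell$ are disjoint, and that $\xi$ vanishes at every vertex of $A_\ell$ and $B_\ell$ outside $C_\ell\cup C'_\ell$ so that Lemma~\ref{lem:sum0} applies coordinatewise. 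Apart from that, this is the same ``long chain versus short chain'' mechanism as in Lemma~\ref{lem:nonpure link}, now closed up around the cycle of length $u$.
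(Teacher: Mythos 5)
Your proposal is correct and follows essentially the same route as the paper's proof: the same ``long saturated chain for $\eta$, shortest saturated chain for $\zeta$'' mechanism closed up around the cycle, with Lemma \ref{lem:sum0} forcing $\eta=1$, $\zeta=-1$ on the $\xi$-zero connecting vertices, and the maximal-chain inequalities for $\SSSSS^{(1)}$ and $\SSSSS^{(-1)}$ summed over $\ell$ to contradict $\sum\rank>\sum\dist$. The only (immaterial) difference is bookkeeping: the paper first replaces each $C_\ell$, $C'_\ell$ by maximal chains of the principal ideal/filter, whereas you carry the extensions $\Phi_\ell$, $\Psi_\ell$ explicitly and let their cardinalities cancel around the cycle.
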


\begin{proof}
Assume the contrary.
Then there exist a positive integer $N$, $\eta\in\SSSSS^{(1)}$
and $\zeta\in\SSSSS^{(-1)}$ such that $\eta+\zeta=N\xi$.

Take a maximal chain $C''_\ell$ in $(-\infty,\max C_\ell]$
with $C''_\ell\supset C_\ell$ for $1\leq\ell\leq d$.
Then $\xi^+(C''_\ell)\geq\xi^+(C_\ell)$ since $\xi\in\SSSSS^{(0)}$
and therefore, we see that
$\xi^+(C''_\ell)+\xi^+(C'_\ell)\geq d$.
On the other hand, we see that
$\xi^+(C''_\ell)+\xi^+(C'_\ell)=\xi^+(C''_\ell\cup C'_\ell)
\leq\xi(-\infty)=d$.
Therefore, we see that
$\xi^+(C''_\ell)=\xi^+(C_\ell)$.
Moreover, $\max C''_\ell=\max C_\ell$ by the choice of $C''_\ell$.
Thus, by replacing $C_\ell$ by $C''_\ell$, we may assume that
$C_\ell$ is a maximal chain in $(-\infty, \max C_\ell]$
for $1\leq\ell\leq u$.
We may also assume that $C'_\ell$ is a maximal chain in 
$[\min C'_\ell,\infty)$ for  any $\ell$.

Take saturated chains
$$
\max C_\ell=x_{\ell 0}\covered x_{\ell 1}\covered\cdots\covered x_{\ell t_\ell}=\min C'_\ell
$$
for $1\leq\ell\leq u$,
$$
\max C_{\ell+1}=y_{\ell 0}\covered y_{\ell 1}\covered\cdots\covered y_{\ell s_\ell}=\min C'_\ell
$$
for $1\leq \ell\leq u-1$ and
$$
\max C_{1}=y_{u 0}\covered y_{u 1}\covered\cdots\covered y_{u s_u}=\min C'_u
$$
with
$t_\ell=\rank([\max C_\ell,\min C'_\ell])$ for $1\leq \ell\leq u$,
$s_\ell=\dist(\max C_{\ell+1},\min C'_\ell)$ for $1\leq \ell\leq u-1$
and
$s_u=\dist(\max C_{1},\min C'_u)$.

Since $C_\ell\cup\{x_{\ell 1},\ldots, x_{\ell t_\ell-1}\}\cup C'_\ell$
is a maximal chain in $P$
and $\xi^+(C_\ell)+\xi^+(C'_\ell)=d$,
we see that $\xi(x_{\ell i})=0$ for $1\leq i\leq t_\ell-1$.
Thus, by Lemma \ref{lem:sum0} we see that
$$
\eta^+(C_\ell)+t_\ell-1+\eta^+(C'_\ell)+1\leq\eta(-\infty)
$$
for $1\leq\ell\leq u$.
We also see that 
$$
\zeta^+(C_{\ell+1})-s_\ell+1+\zeta^+(C'_\ell)-1\leq\zeta(-\infty)
$$
for $1\leq \ell\leq u-1$ and 
$$
\zeta^+(C_{1})-s_u+1+\zeta^+(C'_u)-1\leq\zeta(-\infty)
$$
by the same way.
Therefore,
\begin{eqnarray*}
&&
\sum_{\ell=1}^u(\eta^+(C_\ell)+\eta^+(C'_\ell)+t_\ell)
+\sum_{\ell=1}^{u-1}(\zeta^+(C_{\ell+1})+\zeta^+(C'_\ell)-s_\ell)\\
&&\qquad
+\zeta^+(C_1)+\zeta^+(C'_u)-s_u\\
&\leq&u(\eta(-\infty)+\zeta(-\infty))\\
&=&ud.
\end{eqnarray*}
On the other hand,
\begin{eqnarray*}
&&\sum_{\ell=1}^u(\eta^+(C_\ell)+\zeta^+(C_\ell)+\eta^+(C'_\ell)+\zeta^+(C'_\ell))\\
&=&\sum_{\ell=1}^u(\xi(C_\ell)+\xi(C'_\ell))\\
&=&ud.
\end{eqnarray*}
Thus, we see that
$$
\sum_{\ell=1}^u t_\ell\leq\sum_{\ell=1}^u s_\ell.
$$
This contradicts our assumption.
\end{proof}

\newcommand{\assumpa}{{\bf (A)}}
\newcommand{\assumpb}{{\bf (B)}}
\newcommand{\assumpastast}{{\bf (**)}}
By Lemmas \ref{lem:nonpure link} and \ref{lem:cycle}, 
under the major premises
$\xi\in\SSSSS^{(0)}$ and $\xi(-\infty)=d$,
we see that if 
$$
T^\xi\in\sqrt{\trace{\omega_{\kcp}}}
$$
then it holds that
\begin{enumerate}
\item[\hskip\labelsep\assumpa]
for any chain $C$ in $P$ with $\xi^+(C)=d$, $\starcpx_P(C)$ is pure
and
\item[\hskip\labelsep\assumpb]
for any chains
$C_1$, \ldots, $C_u$, $C'_1$, \ldots, $C'_u$ in $P$  such that
\begin{enumerate}
\item[\hskip\labelsep\assumpastast]
\mylabel{page:assumpastast}
$\max C_1<\min C'_1>\max C_2<\min C'_2>\cdots>\max C_u<\min C'_u>\max C_1$,
$\max C_i\not\leq\max C_j$ (resp.\ $\min C'_i\not\leq\min C'_j$) for any $i$ and $j$
with $i\neq j$,
$C_i$ (resp.\ $C'_i$) is a maximal chain in $(-\infty,\max C_i]$
(resp.\ $[\min C'_i,\infty)$)
for any $i$ and
$\xi^+(C_i)+\xi^+(C'_j)=d$ for any $i$ and $j$, 
\end{enumerate}
it holds that
$\sum_{\ell=1}^u\rank([\max C_\ell,\min C'_\ell])
=\sum_{\ell=1}^{u-1}\dist(\max C_{\ell+1},\min C'_\ell)
+\dist(\max C_1,\min C'_u)$.
\end{enumerate}
We will show the converse of this fact
under the major premises
$\xi\in\SSSSS^{(0)}$ and $\xi(-\infty)=d$.

In order to do this task, we have to construct $\eta\in\SSSSS^{(1)}$,
$\zeta\in\SSSSS^{(-1)}$ and a positive integer $N$ with $\eta+\zeta=N\xi$.
Before going into details, let us observe examples.

\begin{example}
\rm
\mylabel{ex:nx}
Let
$$
P=
\vcenter{\hsize=.4\textwidth\relax
\begin{picture}(55,50)

\put(5,20){\circle*{3}}
\put(5,30){\circle*{3}}
\put(5,40){\circle*{3}}
\put(25,10){\circle*{3}}
\put(25,20){\circle*{3}}
\put(25,30){\circle*{3}}
\put(25,40){\circle*{3}}
\put(45,10){\circle*{3}}
\put(45,20){\circle*{3}}

\put(5,20){\line(0,1){20}}
\put(25,10){\line(0,1){30}}
\put(45,10){\line(0,1){10}}
\put(5,40){\line(1,-1){20}}
\put(25,10){\line(2,1){20}}
\put(25,20){\line(2,-1){20}}

\put(3,40){\makebox(0,0)[r]{$b_1$}}
\put(27,40){\makebox(0,0)[l]{$b_2$}}
\put(3,30){\makebox(0,0)[r]{$e_1$}}
\put(27,30){\makebox(0,0)[l]{$e_2$}}
\put(3,20){\makebox(0,0)[r]{$a_1$}}
\put(27,20){\makebox(0,0)[l]{$a_2$}}
\put(47,20){\makebox(0,0)[l]{$a_3$}}
\put(23,8){\makebox(0,0)[tr]{$d_1$}}
\put(47,8){\makebox(0,0)[tl]{$d_2$}}

\end{picture}
}
$$
$\xi(a_3)=2$,
$\xi(a_i)=\xi(b_i)=\xi(d_i)=1$,
$\xi(e_i)=0$ for $i=1,2$ and
$\xi(-\infty)=3$.
Then $\xi\in\SSSSS^{(0)}$.
Note that \assumpa\ and \assumpb\ are satisfied.
Let $C_1\define\{d_1, a_2, b_1\}$,
$C_2\define\{d_2, a_2 , b_1\}$,
$C_3\define\{d_1, a_2, e_2, b_2\}$,
$C_4\define\{d_2, a_2, e_2, b_2\}$,
$C_5\define\{d_1, a_3\}$,
$C_6\define\{d_2, a_3\}$
and
$C_7\define\{a_1,e_1,b_1\}$.
Then $C_i\in C_\xi^{[3]}$ for $1\leq i\leq 6$.
Therefore, if $\eta+\zeta=N\xi$ for $\eta\in\SSSSS^{(1)}$,
$\zeta\in\SSSSS^{(-1)}$ and a positive integer $N$,
then
\begin{eqnarray*}
&&\eta^+(C_i)\leq\eta(-\infty)-1\\
&&\zeta^+(C_i)\leq\zeta(-\infty)+1\\
&&\eta^+(C_i)+\zeta^+(C_i)=N\xi^+(C_i)=3N\\
\mbox{and}\\
&&\eta(-\infty)+\zeta(-\infty)=N\xi(\infty)=3N
\end{eqnarray*}
for $1\leq i\leq 6$.
Thus, we see that 
\begin{eqnarray}
&&\eta^+(C_i)=\eta(-\infty)-1
\nonumber\\
\mbox{and}
\nonumber\\
&&\zeta^+(C_i)=\zeta(-\infty)+1
\nonumber
\end{eqnarray}
for $1\leq i\leq 6$.
Note that $C_7$ does not have to satisfy these equations
since $\eta^+(C_7)+\zeta^+(C_7)=2N$.

On the other hand, since $\xi(e_i)=0$ for $i=1,2$, we see that
$\eta(e_i)=1$, $\zeta(e_i)=-1$ for $i=1,2$ by Lemma \ref{lem:sum0}.
Thus, we have to adjust values of $\eta$ so that
\begin{eqnarray*}
&&\eta(b_1)=\eta(b_2)+1\\
&&\eta(d_1)=\eta(d_2)\\
\mbox{and}\\
&&\eta(a_3)=\eta(a_2)+\eta(b_1)
\end{eqnarray*}
Therefore, for example, if we set
$\eta(a_3)=4$, 
$\eta(b_1)=3$,
$\eta(b_2)=\eta(d_1)=\eta(d_2)=2$,
$\eta(a_1)=\eta(a_2)=\eta(e_1)=\eta(e_2)=1$,
$\eta(-\infty)=7$,
$\zeta(a_1)=\zeta(a_2)=1$,
$\zeta(a_3)=\zeta(b_2)=\zeta(d_1)=\zeta(d_2)=0$,
$\zeta(b_1)=\zeta(e_1)=\zeta(e_2)=-1$
and
$\zeta(-\infty)=-1$,
then
$\eta\in\SSSSS^{(1)}$, $\zeta\in\SSSSS^{(-1)}$
and
$\eta+\zeta=2\xi$.
\end{example}

This example shows that just to set
$$
\left\{
\begin{array}{rcl}
\eta(z)&=&\xi(z)+1,\\
\zeta(z)&=&-1\\
\end{array}
\right.
$$
for $z\in P$ is not enough.

Consider another example.

\begin{example}
\rm
\mylabel{ex:hex}
Let
$$
P=
\vcenter{\hsize=.4\textwidth\relax
\begin{picture}(55,50)

\put(5,10){\circle*{3}}
\put(5,20){\circle*{3}}
\put(5,30){\circle*{3}}
\put(5,40){\circle*{3}}

\put(35,10){\circle*{3}}
\put(35,40){\circle*{3}}

\put(65,10){\circle*{3}}
\put(65,20){\circle*{3}}
\put(65,30){\circle*{3}}
\put(65,40){\circle*{3}}
\put(25,20){\circle*{3}}
\put(45,30){\circle*{3}}

\put(5,10){\line(0,1){30}}
\put(65,10){\line(0,1){30}}
\put(5,10){\line(1,1){30}}
\put(5,40){\line(1,-1){30}}
\put(35,10){\line(1,1){30}}
\put(35,40){\line(1,-1){30}}

\put(5,8){\makebox(0,0)[t]{$a_1$}}
\put(35,8){\makebox(0,0)[t]{$a_2$}}
\put(65,8){\makebox(0,0)[t]{$a_3$}}
\put(5,42){\makebox(0,0)[b]{$b_1$}}
\put(35,42){\makebox(0,0)[b]{$b_2$}}
\put(65,42){\makebox(0,0)[b]{$b_3$}}
\put(3,20){\makebox(0,0)[r]{$d_1$}}
\put(27,20){\makebox(0,0)[l]{$d_2$}}
\put(67,20){\makebox(0,0)[l]{$d_3$}}
\put(3,30){\makebox(0,0)[r]{$e_1$}}
\put(43,30){\makebox(0,0)[r]{$e_2$}}
\put(67,30){\makebox(0,0)[l]{$e_3$}}

\end{picture}
}
$$
$\xi(a_i)=\xi(b_i)=1$,
$\xi(d_i)=\xi(e_i)=0$
for $i=1,2,3$ and $\xi(-\infty)=2$.
Then $\xi\in\SSSSS^{(0)}$.
Note that \assumpa\ and \assumpb\ are satisfied.
Also let
$C_1\define\{a_1,d_1,e_1,b_1\}$,
$C_2\define\{a_1,b_2\}$,
$C_3\define\{a_2, d_2, b_1\}$,
$C_4\define\{a_2,b_3\}$
$C_5\define\{a_3,e_2,b_2\}$
and
$C_6\define\{a_3,d_3,e_3,b_3\}$.
Then 
$C_i\in C_\xi^{[2]}$ for $1\leq i\leq 6$.

Thus, if $\eta\in\SSSSS^{(1)}$, $\zeta\in\SSSSS^{(-1)}$ and
there is a positive integer $N$ with $\eta+\zeta=N\xi$, then by 
the same argument as in Example \ref{ex:nx}, we see that
$$
\eta^+(C_i)=\eta(-\infty)-1,\quad
\zeta^+(C_i)=\zeta(-\infty)+1
$$
for $1\leq i\leq 6$.
Further, by Lemma \ref{lem:sum0}, 
$$
\eta(d_i)=\eta(e_i)=1,\quad
\zeta(d_i)=\zeta(e_i)=-1
$$
for $i=1,2,3$.

Thus, we have to adjust values of other points so that $\eta^+(C_i)$
is constant for $1\leq i\leq 6$.
In this case, it is enough, for example, to set
$\eta(a_1)=2$, $\eta(a_2)=3$, $\eta(a_3)=1$,
$\eta(b_1)=4$, $\eta(b_2)=6$, $\eta(b_3)=5$,
$\eta(-\infty)=9$,
$\zeta(a_1)=3$, $\zeta(a_2)=2$, $\zeta(a_3)=4$,
$\zeta(b_1)=1$, $\zeta(b_2)=-1$, $\zeta(b_3)=0$
and
$\zeta(-\infty)=1$.
Then 
$\eta\in\SSSSS^{(1)}$, $\zeta\in\SSSSS^{(-1)}$ 
and
$\eta+\zeta=5\xi$.
\end{example}

By the argument above, when $\xi$ with 
\assumpa\ and \assumpb\ above is given, it is necessary to construct
$\eta\in\SSSSS^{(1)}$ and $\zeta\in\SSSSS^{(-1)}$ so that
$\eta(z)=1$ (resp.\ $\zeta(z)=-1$) for any $z\in P\setminus\supp\xi$ and
$\eta^+(C)$ (resp.\ $\zeta^+(C)$) is constant for $C\in C_\xi^{[d]}$.
In order to do this task in general, not by intuition of a human,
our strategy is to construct an adjustment function $\mu\in\ZZZ^{P^-}$
such that
$\mu(z)=1$ for any $z\in P\setminus\supp\xi$,
$\mu^+(C)$ is constant for any $C\in C_\xi^{[d]}$ and set
$\eta=N\xi+\mu$, $\zeta=N\xi-\mu$ for sufficiently large $N$.

W construct such $\mu$ in the following.

\begin{lemma}
\mylabel{lem:adjust sum}
Let $\xi\in\SSSSS^{(0)}$ and $d=\xi(-\infty)$.
If \assumpa\ and \assumpb\ are satisfied,
then there exists $\mu\in\ZZZ^P$ such that $\mu(z)=1$ for any $z\in P\setminus\supp\xi$
and $\mu^+(C)=\mu^+(C')$ for any $C$, $C'\in C^{[d]}_\xi$.
\end{lemma}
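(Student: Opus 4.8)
The plan is to construct $\mu$ explicitly as a combinatorially defined function on $P^-$, guided by the structure of the maximal chains $C\in C^{[d]}_\xi$. The idea is that on each such chain $C$, we already know $\mu$ must take the value $1$ at every point of $C\setminus\supp\xi$ (this is forced by the requirement $\mu(z)=1$ on $P\setminus\supp\xi$), and at the points of $C\cap\supp\xi$ we have freedom to choose values; we want to choose them so that $\mu^+(C)$ comes out to the same constant, call it $K$, independent of $C$. The natural approach is to define, for each $z\in\supp\xi$, a value $\mu(z)$ by something like a ``potential'' or ``distance'' function that measures how far $z$ sits inside the portion of the poset covered by chains in $C^{[d]}_\xi$. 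Concretely, one expects to set $\mu(z)$ to be (a shift of) $\rank([\,?,z\,])$ minus $\dist(\,?,z\,)$ along chains realizing elements of $C^{[d]}_\xi$, so that traversing a segment of a chain through $\supp\xi$ always contributes the same total.

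The key steps, in order, would be: \textbf{(1)} Set up the relevant subposet, namely the union $W$ of the supports of all chains $C\in C^{[d]}_\xi$, and identify the ``entry'' and ``exit'' structure: for $C\in C^{[d]}_\xi$, decompose $C$ into maximal runs inside $\supp\xi$ and runs outside $\supp\xi$. Observe, using $\xi\in\SSSSS^{(0)}$ and $\xi^+(C)=d$, that the alternation pattern of these runs is highly constrained (this is where Lemma \ref{lem:cross chain} becomes the workhorse: if two chains in $C^{[d]}_\xi$ cross at a point $z$, their weights on the two halves agree). \textbf{(2)} Using Lemma \ref{lem:cross chain} repeatedly, show that the ``weight so far'' function $C\ni z\mapsto \xi^+((-\infty,z]\cap C)$ is in fact independent of which chain $C\in C^{[d]}_\xi$ through $z$ we pick; call this common value $w(z)$ for $z\in W$. \textbf{(3)} Define $\mu$ on $\supp\xi\cap W$ so that along any chain the accumulated $\mu$-weight tracks a fixed linear function of $w(z)$ together with a correction coming from $\rank$ and $\dist$ of the surrounding non-support segments; define $\mu(z)=1$ for $z\notin\supp\xi$ and $\mu(z)=1$ (or any fixed convenient value) for $z\in\supp\xi\setminus W$ since such $z$ lie on no chain of $C^{[d]}_\xi$ and are irrelevant. \textbf{(4)} Verify $\mu^+(C)=K$ for all $C\in C^{[d]}_\xi$: this reduces, run by run, to the identity that on a non-support run between two consecutive support points (or between $-\infty$/$\infty$ and a support point) the number of interior non-support points equals the relevant $\dist$ — and here assumptions \assumpa\ and \assumpb\ are exactly what force the ``$\dist$ equals $\rank$'' equalities that make the telescoping sum land on a chain-independent constant. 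Assumption \assumpa\ handles a single problematic chain (the link being pure forces the non-support extensions to all have the same length), while \assumpb\ handles the cyclic configurations of segments, ensuring that the sum of the $\rank$'s around a cycle equals the sum of the $\dist$'s, so no inconsistency arises when we try to make $\mu^+$ globally constant.

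\textbf{The main obstacle} I anticipate is step (3)–(4): showing that a locally consistent choice of $\mu$ can be made globally consistent. Locally, along one chain, it is trivial to choose values making $\mu^+$ equal to whatever we want; the difficulty is that a single element $z\in\supp\xi$ may lie on many chains of $C^{[d]}_\xi$, and the ``corrections'' demanded by different chains through $z$ must be compatible. This compatibility is precisely a cocycle / path-independence condition on the graph whose vertices are (roughly) the support runs and whose edges are the non-support runs connecting them; \assumpb\ asserts that every cycle in this graph has vanishing ``discrepancy'' ($\sum\rank = \sum\dist$ around the cycle), which is exactly the obstruction to the existence of a consistent potential. So the proof architecture will be: define the discrepancy cocycle, invoke \assumpa\ and \assumpb\ to show it is a coboundary, integrate it to get $\mu$ on $W$, extend by $1$ elsewhere, and conclude. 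I would carry out step (2) carefully first, since pinning down $w(z)$ as well-defined is what lets the later bookkeeping go through cleanly, and I expect the write-up there to lean heavily on a connectedness argument among the chains in $C^{[d]}_\xi$ passing through a common element, again via Lemma \ref{lem:cross chain}.
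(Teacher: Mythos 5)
Your high-level diagnosis is right: the problem is to make the linear system $\mu^+(C)=K$ for $C\in C^{[d]}_\xi$ consistent, the constraints are closed under splicing (Lemma \ref{lem:cross chain} does make the level function $w(z)=\xi^+((-\infty,z]\cap C)$ well-defined on elements of chains in $C^{[d]}_\xi$, exactly as in your step (2)), and \assumpa\ reduces everything to the values on $\supp\xi$. But the core of your argument --- ``\assumpb\ asserts that every cycle in this graph has vanishing discrepancy, which is exactly the obstruction to the existence of a consistent potential'' --- is asserted, not proved, and it is not true as stated. Condition \assumpb\ only controls a very restricted family of cycles: alternating configurations $a_1<b_1>a_2<\cdots$ in which the $a_i$ form an antichain, the $b_j$ form an antichain, and \emph{every} cross-sum $\xi^+(C_i)+\xi^+(C'_j)$ equals $d$ (forcing all $a_i$ to sit at one $w$-level and all $b_j$ at the adjacent one, with $a_i\covered b_j$ in $\supp\xi$). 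A general cycle in your constraint graph is a sequence of chains in $C^{[d]}_\xi$ sharing arbitrary elements at arbitrary levels, and reducing its discrepancy to a combination of the elementary \assumpb-cycles is precisely the hard combinatorial content of the lemma. This is what the paper's proof spends all its effort on: it does not integrate a potential at all, but runs an iterative correction with a double induction on $M(\mu)-m(\mu)$ and $\#(C^{[d]}_\xi\cap C^{[M]}_\mu)$, building antichains $A=\bigcup A_i$, $B=\bigcup B_i$ by an alternating search between max-chains and min-chains, proving they are antichains at fixed $w$-levels (Claim \ref{claim:anti chain}), and showing that failure of termination ($C_0\cap B\neq\emptyset$, Claim \ref{claim:not meet c0}) would manufacture an \assumpb-configuration with $\sum\rank>\sum\dist$. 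Note also that your ansatz ``$\mu(z)$ is a fixed linear function of $w(z)$ plus local rank/dist corrections'' cannot work literally: in Example \ref{ex:hex} the three minimal elements $a_1,a_2,a_3$ all have the same $w$-value yet the constructed $\mu$ assigns them $-2,-1,-3$, so $\mu$ genuinely depends on the global cycle structure, not on the level alone.

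A secondary issue: even granting path-independence over $\QQQ$, your ``integrate the coboundary'' step only produces a rational solution of an inhomogeneous integer system, and you need $\mu\in\ZZZ^P$ with $\mu(z)=1$ (not merely some positive constant) off $\supp\xi$; rescaling by a denominator destroys that normalization, so integrality needs a separate argument (the paper gets it for free because each correction step changes $\mu$ by $\pm1$ on antichains). In short, the skeleton is reasonable and the identification of \assumpb\ as a cocycle condition is a genuine insight, but the proposal leaves unproved exactly the step that the lemma is about: that the elementary cycles of \assumpb\ generate all the compatibility conditions.
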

\begin{proof}
First note that by assumption \assumpa, it holds that
$\rank([\max C_{\ell+1},\min C'_\ell])=\dist(\max C_{\ell+1},\min C'_\ell)$
for $1\leq i\leq u-1$ and
$\rank([\max C_1,\min C'_u])=\dist(\max C_1,\min C'_u)$
for chains satisfying \assumpastast\ of \assumpb.

If $C^{[d]}_\xi=\emptyset$, the result trivially holds.
Assume that $C^{[d]}_\xi\neq\emptyset$.
For $\mu\in\ZZZ^P$, we set
$M(\mu)\define\max\{\mu^+(C)\mid C\in C^{[d]}_\xi\}$
and
$m(\mu)\define\min\{\mu^+(C)\mid C\in C^{[d]}_\xi\}$.
We show that for a given $\mu\in\ZZZ^P$ with $\mu(z)=1$ for any
$z\in P\setminus \supp\xi$ and $M(\mu)>m(\mu)$, there exists 
$\mu'\in\ZZZ^P$ such that $\mu'(z)=1$ for any $z\in P\setminus\supp\xi$,
$M(\mu')\leq M(\mu)$, $m(\mu')\geq m(\mu)$ and
$\#(C^{[d]}_\xi\cap C^{[M(\mu)]}_{\mu'})
<\#(C^{[d]}_\xi\cap C^{[M(\mu)]}_{\mu})$.
Then the result follows by the double induction on $M(\mu)-m(\mu)$
and 
$\#(C^{[d]}_\xi\cap C^{[M(\mu)]}_{\mu})$
starting from
$$
\mu_0(z)=
\left\{
\begin{array}{ll}
1&\quad z\in P\setminus\supp\xi,\\
0&\quad z\in\supp\xi.
\end{array}
\right.
$$

Set $M\define M(\mu)$ and $m\define m(\mu)$.

We begin by outlining our argument.
Take $C\in C_\xi^{[d]}\cap C_\mu^{[M]}$ and
take an appropriate element $a_1\in C\cap\supp\xi$.
Decrease the value of $\mu$ at $a_1$ by 1.
If there is $C'\in C_\xi^{[d]}\cap C_\mu^{[m]}$
with $a_1\in C'$, then
take $b_1\in C'\cap\supp\xi$ with $b_1\neq a_1$
and increase the value of $\mu$ at $b_1$ by 1 for each such $C'$.
If there is $C''\in C_\xi^{[d]}\cap(C_\mu^{[M]}\cup C_\mu^{[M-1]})$
with $b_1\in C''$,
then take $a_2\in C''\cap\supp\xi$ with $a_2\neq b_1$ and decrease the value of $\mu$
at $a_2$ by 1 for each such $C''$ and so on.
The main point in the proof is that we can take such $a_1$, $b_1$, $a_2$, \ldots
appropriately so that no infinite loop occures.
Claim \ref{claim:not meet c0} below assures no such infinite loop
and the essence of the proof of Claim \ref{claim:not meet c0}
is Claim \ref{claim:not in s+1}.

Now we return to our proof.

Take $C_0\in C^{[d]}_\xi\cap C^{[M]}_\mu$ and
set $C_0\cap\supp\xi=\{c_1, c_2,\ldots, c_t\}$,
$c_1<c_2<\cdots <c_t$.

Since $C_0\cap\supp\xi$ is a chain in $P$ with $\xi^+(C_0\cap\supp\xi)=d$,
we see that $(-\infty,c_1)$, $(c_t,\infty)$ and $(c_i,c_{i+1})$ for 
$1\leq i\leq t-1$ are pure by \assumpa.
Further, $(-\infty,c_1)\cap\supp\xi=(c_t,\infty)\cap\supp\xi=
(c_i,c_{i+1})\cap\supp\xi=\emptyset$ for $1\leq i\leq t-1$,
since
$d=\max\{\xi^+(C)\mid C$ is a chain in $P\}$
and $\xi(z)\geq 0$ for any $z\in P$.

If there is no $C\in \cdm$ with $C\ni c_1$,
then it is enough to set
$$
\mu'(z)=
\left\{
\begin{array}{ll}
\mu(z)&\quad z\neq c_1,\\
\mu(z)-1&\quad z=c_1,
\end{array}
\right.
$$
since 
$(\mu')^+(C_0)=\mu^+(C_0)-1$,
$\mu^+(C)-1\leq (\mu')^+(C)\leq\mu^+(C)$ for any chain $C$ in $P$
and 
$(\mu')^+(C)=\mu^+(C)$ for any $C\in \cdm$.

Assume that there exists $C\in \cdm$ with $C\ni c_1$.
Then
$\mu^+((-\infty,c_1)\cap C)=\mu^+((-\infty,c_1)\cap C_0)$
for such $C$,
since $(-\infty,c_1)$ is pure, $(-\infty,c_1)\cap\supp\xi=\emptyset$ and
$\mu(z)=1$ for any $z\in P\setminus\supp\xi$.
In particular,
$\{\ell\mid$ there exists $C\in \cdm$ such that
$C\ni c_\ell$, $\mu^+((-\infty,c_\ell)\cap C)=\mu^+((-\infty,c_\ell)\cap C_0)\}$
is not an empty set.

Let $s$ be the maximum number of this set.

Before going further, let us look at an example.
Consider $P$ and $\xi$ of Example \ref{ex:nx}.
If
$$
\mu(z)\define
\left\{
\begin{array}{ll}
1&\quad\mbox{$z\in\{e_1, e_2\}$},\\
0&\quad\mbox{otherwise},
\end{array}
\right.
$$
then $M=1$, $m=0$ and
$$
C_\xi^{[3]}\cap C_\mu^{[1]}=\{C_3, C_4\}.
$$
In both cases where $C_0=C_3$ or $C_0=C_4$, $c_s=a_2$.
Note that if we begin with the following procedure  
from $d_1$, we will go into an infinite loop.

Now we return to our proof.
In the setting above, we make the following claim.

\begin{claim}
\mylabel{claim: s<t}
$s<t$.
\end{claim}
In fact, assume the contrary.
Then there exists $C\in \cdm$ with $C\ni c_t$ and $\mu^+((-\infty,c_t)\cap C)
=\mu^+((-\infty,c_t)\cap C_0)$.
Since $(c_t,\infty)$ is pure, $(c_t,\infty)\cap\supp\xi=\emptyset$ and
$\mu(z)=1$ for any $z\in P\setminus\supp\xi$, we see that
$\mu^+((c_t,\infty)\cap C)=\mu^+((c_t,\infty)\cap C_0)$.
Therefore, 
$$
m=\mu^+(C)=\mu^+(C_0)=M.
$$
This contradicts our assumption.

Next we
show the following fact which is essential to ensure there are no infinite loops.

\begin{claim}
\mylabel{claim:not in s+1}
There is no $C\in\cdm$ with $C\ni c_s$, $c_{s+1}$.
\end{claim}
In fact, assume the contrary and take such $C$.
Also take $C'\in\cdm$ with $C'\ni c_s$ and $\mu^+((-\infty,c_s)\cap C')
=\mu^+((-\infty,c_s)\cap C_0)$.
By applying Lemma \ref{lem:cross chain} 
first to $\xi$ and the set of all maximal chains in $P$ 
we see that $((-\infty,c_s]\cap C)\cup ((c_s,\infty)\cap C')$, 
$((-\infty,c_s]\cap C')\cup((c_s,\infty)\cap C) \in C_\xi^{[d]}$
and next 
to $\mu$ and $C^{[d]}_\xi$,
we see that 
$\mu^+((-\infty,c_s)\cap C)=\mu^+((-\infty,c_s)\cap C')$.
Since $(c_s, c_{s+1})$ is pure, $(c_s, c_{s+1})\cap\supp\xi=\emptyset$
and $\mu(z)=1$ for any $z\in P\setminus\supp\xi$, we see that
$\mu^+((c_s,c_{s+1})\cap C)=\mu^+((c_s,c_{s+1})\cap C_0)$.
Therefore,
$\mu^+((-\infty,c_{s+1})\cap C)=\mu^+((-\infty,c_{s+1})\cap C_0)$.
This contradicts to the maximality of $s$.

\medskip

Set 
$$A_1\define\{c_s\}$$ and set
\begin{align*}
B_1\define \{b\in\supp\xi\mid &\text{ there exists } C\in\cdm \text{ such that } \\ &c_s, b\in C \text{  
and } c_s\covered b \text{ in } \supp\xi\}.
\end{align*}
For $i>1$ we define $A_i$ and $B_i$ inductively.
If $A_1$, \ldots, $A_{i-1}$, $B_1$, \ldots, $B_{i-1}$ are defined, we set
\begin{align*}
A_i\define\{a\in\supp\xi\setminus \bigcup_{j=1}^{i-1}A_j\mid&  \text{ 
there exists } b\in B_{i-1},C\in\cdM \\ & \text{ such that
}a, b\in C \text{ and } a\covered b \text{ in }\supp\xi\},
\end{align*}
\begin{align*}
B_i\define\{b\in\supp\xi\setminus \bigcup_{j=1}^{i-1}B_j\mid &\text{
there exists } a\in A_i\text{ and }C\in\cdm \text{ such that }
\\ & a, b\in C \text{ and } a\covered b\text{ in }\supp\xi\}.
\end{align*}
Also set
$A\define\bigcup_{i=1}^\infty A_i$ and $B\define\bigcup_{i=1}^\infty B_i$.

Before going further, let us look at an example.
Consider $P$ and $\xi$ of Example \ref{ex:hex}.
Since $\xi(-\infty)=2$, $d=2$.
Set
$$
\mu(z)\define
\left\{
\begin{array}{ll}
1&\quad z\in\{d_1,d_2, d_3,e_1,e_2,e_3\},\\
0&\quad z\in\{a_1,a_2, a_3,b_1,b_2,b_3\}.\\
\end{array}
\right.
$$
Then in the notation of Example \ref{ex:hex},
$C_i\in C_\xi^{[2]}$ for $1\leq i\leq 6$.
Thus, $M=2$ and $m=0$.
Further, $C_\xi^{[2]}\cap C_\mu^{[2]}=\{C_1, C_6\}$.
If we select $C_0$ as $C_1$, then $A_1=\{a_1\}$,
$B_1=\{b_2\}$, $A_2=\{a_3\}$ and $B_2=\emptyset$.
Thus, $A=\{a_1,a_3\}$ and $B=\{b_2\}$.
If we select $C_0$ as $C_6$, then this is a simple case noted at the 
beginning of the proof.

Now we continue our proof.
In this setting, we have the following.

\begin{claim}
\mylabel{claim:anti chain}
For any $a\in A$  (resp.\ $b\in B$) and $C\in C^{[d]}_\xi$ with $a\in C$
(resp.\ $b\in C$), it holds that
$\xi^+((-\infty,a]\cap C)=\xi^+((-\infty,c_s]\cap C_0)$
(resp.\ $\xi^+([b,\infty)\cap C)=d-\xi^+((-\infty,c_s]\cap C_0)$).
In particular, $A$ and $B$ are antichains.
\end{claim}
In fact, let $a\in A_1$.
Then $a=c_s$.
By applying Lemma \ref{lem:cross chain} 
to $\xi$ and the set of all maximal chains in $P$, 
we see that
$$
\xi^+((-\infty,a]\cap C)=\xi^+((-\infty,c_s]\cap C_0).
$$
Next,
let $b\in B_1$ and $C\in C^{[d]}_\xi$ with $b\in C$.
By the definition of $B_1$, we see that 
$c_s\covered b$ in $\supp\xi$ and
there is $C^{(1)}\in C^{[d]}_\xi$
with $c_s$, $b\in C^{(1)}$.
By applying Lemma \ref{lem:cross chain} 
to $\xi$ and the set of all maximal chains in $P$, 
we see that
$$
\xi^+((-\infty,c_s]\cap C_0)=\xi^+((-\infty,c_s]\cap C^{(1)}).
$$
Since $c_s\covered b$ in $\supp\xi$, we see that 
$$
\xi^+((-\infty,c_s]\cap C^{(1)})+\xi^+([b,\infty)\cap C^{(1)})=d
$$
and therefore
$$
\xi^+([b,\infty)\cap C^{(1)})=d-\xi^+((-\infty,c_s]\cap C_0).
$$
By applying Lemma \ref{lem:cross chain} 
to $\xi$ and the set of all maximal chains in $P$, 
we see that
$$
\xi^+([b,\infty)\cap C)=\xi^+([b,\infty)\cap C^{(1)})=d-\xi^+((-\infty,c_s]\cap C_0).
$$

Next, let $a\in A_2$ and $C\in C^{[d]}_\xi$ with $a\in C$.
Then by the definition of $A_2$, we see that there are $b\in B_1$ and 
$C^{(2)}\in C^{[d]}_\xi$ such that $a\covered b$ in $\supp\xi$ and
$a$, $b\in C^{(2)}$.
By applying Lemma \ref{lem:cross chain} 
to $\xi$ and the set of all maximal chains in $P$, 
we see that
$$
\xi^+((-\infty,a]\cap C)=\xi^+((-\infty,a]\cap C^{(2)}).
$$
Since $a\covered b$ in $\supp\xi$, we see that
$$
\xi^+((-\infty,a]\cap C^{(2)})+\xi^+([b,\infty)\cap C^{(2)})=d.
$$
Moreover, by the fact shown above, we see that
$$
\xi^+([b,\infty)\cap C^{(2)})=d-\xi^+((-\infty,c_s]\cap C_0).
$$
Thus, we see that
$$
\xi^+((-\infty,a]\cap C)=\xi^+((-\infty,c_s]\cap C_0).
$$

By repeating this argument and using induction, we see the first part of the assertion.
Next let $a$, $a'\in A$.
Take $C$, $C'\in C^{[d]}_\xi$ with $a\in C$ and $a'\in C'$.
By the first part of the assertion, we see that
$$
\xi^+((-\infty,a]\cap C)=\xi^+((-\infty,a']\cap C').
$$
If $a<a'$, take a maximal chain $C''$ containing
$((-\infty,a]\cap C)\cup([a',\infty)\cap C')$.
Then
\begin{eqnarray*}
\xi^+(C'')&\geq&
\xi^+((-\infty,a]\cap C)+\xi^+([a',\infty)\cap C')\\
&=&
\xi^+((-\infty,a]\cap C)+\xi(a')+\xi^+((a',\infty)\cap C')\\
&=&
\xi^+((-\infty,a]\cap C)+\xi(a')+\xi^+(C')-\xi^+((-\infty,a']\cap C')\\
&=&
\xi(a')+\xi^+(C')\\
&=&
d+\xi(a').
\end{eqnarray*}
This contradicts the facts that $\xi\in \SSSSS^{(0)}$,
$\xi(-\infty)=d$, $a'\in A\subset\supp\xi$ and $C''$ is a maximal chain in $P$.
Thus, we see that $A$ is an antichain.
We also see that $B$ is an antichain by the same way.

\medskip

Now we

\begin{claim}
\mylabel{claim:not meet c0}
$C_0\cap B=\emptyset$.
\end{claim}
In fact, assume the contrary and let $B_u\cap C_0\neq\emptyset$.
Take $b_u\in B_u\cap C_0$.
Note that $u>1$.
In fact, if $u=1$, then $b_u\in B_1$ and therefore $b_u\covers c_s$ in $\supp\xi$.
Since $b_u\in C_0$, we see that $b_u=c_{s+1}$.
On the other hand, since $b_u\in B_1$, there is $C\in C_\xi^{[d]}\cap C_\mu^{[m]}$ 
with $c_s$, $b_u\in C$.
This would contradict Claim \ref{claim:not in s+1}.
Thus, $u>1$.

Take $a_u\in A_u$ and $\cnp_u\in\cdm$ such that $a_u$, $b_u\in \cnp_u$ and
$a_u\covered b_u$ in $\supp\xi$.
Take $b_{u-1}\in B_{u-1}$ and $\cp_{u-1}\in\cdM$ such that $a_u$, $b_{u-1}\in \cp_{u-1}$
and $a_u\covered b_{u-1}$ in $\supp\xi$.
Take $a_{u-1}\in A_{u-1}$ and $\cnp_{u-1}\in\cdm$ such that 
$a_{u-1}$, $b_{u-1}\in \cnp_{u-1}$ and $a_{u-1}\covered b_{u-1}$ in $\supp\xi$.
Take $b_{u-2}\in B_{u-2}$ and $\cp_{u-2}\in\cdM$ such that $a_{u-1}$, $b_{u-2}\in \cp_{u-2}$
and $a_{u-1}\covered b_{u-2}$ in $\supp\xi$.
And so on.

By Claim \ref{claim:anti chain}, we see that
\begin{equation}
\xi^+((-\infty,a_i]\cap \cnp_i)+\xi^+([b_j,\infty)\cap \cnp_j)=d
\mylabel{eq:sum d}
\nonumber
\end{equation}
for any $i$ and $j$.
Since $a_i\neq a_j$ and $b_i\neq b_j$ if $i\neq j$,
and $A$ and $B$ are antichains, we see that chains
$(-\infty,a_1]\cap \cnp_1$,
$(-\infty,a_2]\cap \cnp_2$, \ldots,
$(-\infty,a_u]\cap \cnp_u$,
$[b_1,\infty)\cap \cnp_1$,
$[b_2,\infty)\cap \cnp_2$, \ldots,
$[b_u,\infty)\cap \cnp_u$
satisfy \assumpastast\  of \assumpb.
Moreover, since
$$
\xi^+((-\infty,c_s]\cap C_0)+\xi^+([b_u,\infty)\cap C_0)=d
$$
by Claim \ref{claim:anti chain}, we see that $b_u=c_{s+1}$.

Set $C''_i\define ((-\infty,a_{i+1}]\cap \cnp_{i+1})\cup((a_{i+1},b_i)\cap \cp_i)
\cup([b_i,\infty)\cap \cnp_i)$
for $1\leq i\leq u-1$
and
$C''_u\define ((-\infty,a_{1}]\cap \cnp_{1})\cup((a_{1},b_u)\cap C_0)
\cup([b_u,\infty)\cap \cnp_u)$.
Then
$C''_i\in C^{[d]}_\xi$ for $1\leq i\leq u$.
Therefore,
$\mu^+(C''_i)\geq m$ for $1\leq i\leq u-1$
and 
$\mu^+(C''_u)>m$ by Claim \ref{claim:not in s+1}.
Thus,
since $(a_1,b_u)$ and $(a_{i+1},b_i)$ for $1\leq i\leq u-1$ are pure by \assumpa\
applied to $C_0$ and $\cp_i$ for $1\leq i\leq u-1$
and $\mu(z)=1$ for any $z\in (a_1,b_u)\cup\bigcup_{i=1}^{u-1}(a_{i+1},b_i)$,
we see that
\begin{eqnarray*}
&&
\sum_{i=1}^u(\mu^+((-\infty,a_i]\cap \cnp_i)+\mu^+([b_i,\infty)\cap \cnp_i))\\
&&\qquad
+\sum_{i=1}^{u-1}\rank([a_{i+1},b_i))+\rank([a_1,b_u))\\
&=&\sum_{i=1}^u\mu^+(C''_i)\\
&>&mu.
\end{eqnarray*}
On the other hand, we see that
\begin{eqnarray*}
&&
\sum_{i=1}^u(\mu^+((-\infty,a_i]\cap \cnp_i)+\mu^+([b_i,\infty)\cap \cnp_i)
+\rank([a_i,b_i)))\\
&=&\sum_{i=1}^u\mu^+(\cnp_i)\\
&=&mu.
\end{eqnarray*}
by the same way.
Therefore, we see that
$$
\sum_{i=1}^{u-1}\rank([a_{i+1},b_i))+\rank([a_1,b_u))
>\sum_{i=1}^u\rank([a_i,b_i)).
$$
This contradicts \assumpb.

Thus, we see that $B\cap C_0=\emptyset$.

\medskip

Set
$$
\mu'(z)\define
\left\{
\begin{array}{ll}
\mu(z)&\quad z\in P\setminus(A\cup B),\\
\mu(z)-1&\quad z\in A,\\
\mu(z)+1&\quad z\in B.
\end{array}
\right.
$$
We claim that $m(\mu')\geq m$, $M(\mu')\leq M$ and
$\#(C^{[d]}_\xi\cap C^{[M]}_{\mu'})
<\#(C^{[d]}_\xi\cap C^{[M]}_{\mu})$.
First note that 
there is no element in $C^{[d]}_\xi$ which contains 2 or
more elements of $A$ (resp.\ $B$), since $A$ (resp.\ $B$) is an antichain.

Let $C\in C^{[d]}_\xi$.
If $C\cap(A\cup B)=\emptyset$ or
$C\cap A\neq\emptyset$ and $C\cap B\neq\emptyset$,
then
$(\mu')^+(C)=\mu^+(C)$,
by the fact noted above.
If $C\cap A\neq\emptyset $ and $C\cap B=\emptyset$, then
$(\mu')^+(C)=\mu^+(C)-1$.
If $C\cap A=\emptyset $ and $C\cap B\neq\emptyset$, then
$(\mu')^+(C)=\mu^+(C)+1$.

Suppose that $C\in\cdm$.
If $C\cap A\neq\emptyset$, then by the definition of $B$, we see that
$C\cap B\neq\emptyset$.
Therefore, $(\mu')^+(C)=m$.
Further, if $C\cap A=\emptyset$, we see by the above argument,
that $(\mu')^+(C)\geq m$.
Thus, we see that $m(\mu')\geq m$.
We see that for any $C\in \cdM$,
$(\mu')^+(C)\leq\mu^+(C)$ by the same way.
In particular, $M(\mu')\leq M$.
Moreover, since $C_0\cap A\neq\emptyset$ and $C_0\cap B=\emptyset$,
we see that $C_0\not\in C^{[M]}_{\mu'}$.
Since $(\mu')^+(C)\leq M-1$ for any $C\in C^{[d]}_\xi\cap C^{[M-1]}_\mu$,
we see that
$\#(C^{[d]}_\xi\cap C^{[M]}_{\mu'})
<\#(C^{[d]}_\xi\cap C^{[M]}_\mu)$.
\end{proof}

Let us observe the construction of Lemma \ref{lem:adjust sum} by
$P$ and $\xi$ of Examples \ref{ex:nx} and \ref{ex:hex}.
First consider $P$ and $\xi$ of Example \ref{ex:nx}.
$C_0$ in the proof of Lemma \ref{lem:adjust sum} is $C_1$ or $C_2$
and $c_s=a_2$.
Further, $A_1=\{a_2\}$, $B_1=\{b_1\}$ and $A_2=\emptyset$.
Thus,
$$
\mu_0=
\vcenter{\hsize=.4\textwidth\relax
\begin{picture}(55,50)

\put(5,20){\circle*{3}}
\put(5,30){\circle*{3}}
\put(5,40){\circle*{3}}
\put(25,10){\circle*{3}}
\put(25,20){\circle*{3}}
\put(25,30){\circle*{3}}
\put(25,40){\circle*{3}}
\put(45,10){\circle*{3}}
\put(45,20){\circle*{3}}

\put(5,20){\line(0,1){20}}
\put(25,10){\line(0,1){30}}
\put(45,10){\line(0,1){10}}
\put(5,40){\line(1,-1){20}}
\put(25,10){\line(2,1){20}}
\put(25,20){\line(2,-1){20}}

\put(3,40){\makebox(0,0)[r]{$0$}}
\put(27,40){\makebox(0,0)[l]{$0$}}
\put(3,30){\makebox(0,0)[r]{$1$}}
\put(27,30){\makebox(0,0)[l]{$1$}}
\put(3,20){\makebox(0,0)[r]{$0$}}
\put(27,20){\makebox(0,0)[l]{$0$}}
\put(47,20){\makebox(0,0)[l]{$0$}}
\put(23,8){\makebox(0,0)[tr]{$0$}}
\put(47,8){\makebox(0,0)[tl]{$0$}}

\end{picture}
}
$$
and
$$
\mu_1=
\vcenter{\hsize=.4\textwidth\relax
\begin{picture}(55,50)

\put(5,20){\circle*{3}}
\put(5,30){\circle*{3}}
\put(5,40){\circle*{3}}
\put(25,10){\circle*{3}}
\put(25,20){\circle*{3}}
\put(25,30){\circle*{3}}
\put(25,40){\circle*{3}}
\put(45,10){\circle*{3}}
\put(45,20){\circle*{3}}

\put(5,20){\line(0,1){20}}
\put(25,10){\line(0,1){30}}
\put(45,10){\line(0,1){10}}
\put(5,40){\line(1,-1){20}}
\put(25,10){\line(2,1){20}}
\put(25,20){\line(2,-1){20}}

\put(3,40){\makebox(0,0)[r]{$1$}}
\put(27,40){\makebox(0,0)[l]{$0$}}
\put(3,30){\makebox(0,0)[r]{$1$}}
\put(27,30){\makebox(0,0)[l]{$1$}}
\put(3,20){\makebox(0,0)[r]{$0$}}
\put(27,20){\makebox(0,0)[l]{$-1$}}
\put(47,20){\makebox(0,0)[l]{$0$}}
\put(23,8){\makebox(0,0)[tr]{$0$}}
\put(47,8){\makebox(0,0)[tl]{$0$}}

\end{picture}
}
$$
Note that, since the maximal chain $\{a_1, e_1, b_1\}$ does not belong
to $C_\xi^{[3]}$, we do not have to care about it.

Next consider $P$ and $\xi$ of Example \ref{ex:hex}.
First set
$$
\mu_0=
\vcenter{\hsize=.4\textwidth\relax
\begin{picture}(55,50)

\put(5,10){\circle*{3}}
\put(5,20){\circle*{3}}
\put(5,30){\circle*{3}}
\put(5,40){\circle*{3}}

\put(35,10){\circle*{3}}
\put(35,40){\circle*{3}}

\put(65,10){\circle*{3}}
\put(65,20){\circle*{3}}
\put(65,30){\circle*{3}}
\put(65,40){\circle*{3}}
\put(25,20){\circle*{3}}
\put(45,30){\circle*{3}}

\put(5,10){\line(0,1){30}}
\put(65,10){\line(0,1){30}}
\put(5,10){\line(1,1){30}}
\put(5,40){\line(1,-1){30}}
\put(35,10){\line(1,1){30}}
\put(35,40){\line(1,-1){30}}

\put(5,8){\makebox(0,0)[t]{$0$}}
\put(35,8){\makebox(0,0)[t]{$0$}}
\put(65,8){\makebox(0,0)[t]{$0$}}
\put(5,42){\makebox(0,0)[b]{$0$}}
\put(35,42){\makebox(0,0)[b]{$0$}}
\put(65,42){\makebox(0,0)[b]{$0$}}
\put(3,20){\makebox(0,0)[r]{$1$}}
\put(27,20){\makebox(0,0)[l]{$1$}}
\put(67,20){\makebox(0,0)[l]{$1$}}
\put(3,30){\makebox(0,0)[r]{$1$}}
\put(43,30){\makebox(0,0)[r]{$1$}}
\put(67,30){\makebox(0,0)[l]{$1$}}

\end{picture}
}
$$
If we take $C_0$ in the proof of Lemma \ref{lem:adjust sum} $C_1$,
then $A_1=\{a_1\}$, $B_1=\{b_2\}$, $A_2=\{a_3\}$ and $B_2=\emptyset$.
Thus,
$$
\mu_1=
\vcenter{\hsize=.4\textwidth\relax
\begin{picture}(55,50)

\put(5,10){\circle*{3}}
\put(5,20){\circle*{3}}
\put(5,30){\circle*{3}}
\put(5,40){\circle*{3}}

\put(35,10){\circle*{3}}
\put(35,40){\circle*{3}}

\put(65,10){\circle*{3}}
\put(65,20){\circle*{3}}
\put(65,30){\circle*{3}}
\put(65,40){\circle*{3}}
\put(25,20){\circle*{3}}
\put(45,30){\circle*{3}}

\put(5,10){\line(0,1){30}}
\put(65,10){\line(0,1){30}}
\put(5,10){\line(1,1){30}}
\put(5,40){\line(1,-1){30}}
\put(35,10){\line(1,1){30}}
\put(35,40){\line(1,-1){30}}

\put(5,8){\makebox(0,0)[t]{$-1$}}
\put(35,8){\makebox(0,0)[t]{$0$}}
\put(65,8){\makebox(0,0)[t]{$-1$}}
\put(5,42){\makebox(0,0)[b]{$0$}}
\put(35,42){\makebox(0,0)[b]{$1$}}
\put(65,42){\makebox(0,0)[b]{$0$}}
\put(3,20){\makebox(0,0)[r]{$1$}}
\put(27,20){\makebox(0,0)[l]{$1$}}
\put(67,20){\makebox(0,0)[l]{$1$}}
\put(3,30){\makebox(0,0)[r]{$1$}}
\put(43,30){\makebox(0,0)[r]{$1$}}
\put(67,30){\makebox(0,0)[l]{$1$}}

\end{picture}
}
$$
Now if we select $C_0$ to be $C_1$ again, then
 $A_1=\{a_1\}$, $B_1=\{b_2\}$, $A_2=\{a_3\}$ and $B_2=\emptyset$.
Thus,
$$
\mu_2=
\vcenter{\hsize=.4\textwidth\relax
\begin{picture}(55,50)

\put(5,10){\circle*{3}}
\put(5,20){\circle*{3}}
\put(5,30){\circle*{3}}
\put(5,40){\circle*{3}}

\put(35,10){\circle*{3}}
\put(35,40){\circle*{3}}

\put(65,10){\circle*{3}}
\put(65,20){\circle*{3}}
\put(65,30){\circle*{3}}
\put(65,40){\circle*{3}}
\put(25,20){\circle*{3}}
\put(45,30){\circle*{3}}

\put(5,10){\line(0,1){30}}
\put(65,10){\line(0,1){30}}
\put(5,10){\line(1,1){30}}
\put(5,40){\line(1,-1){30}}
\put(35,10){\line(1,1){30}}
\put(35,40){\line(1,-1){30}}

\put(5,8){\makebox(0,0)[t]{$-2$}}
\put(35,8){\makebox(0,0)[t]{$0$}}
\put(65,8){\makebox(0,0)[t]{$-2$}}
\put(5,42){\makebox(0,0)[b]{$0$}}
\put(35,42){\makebox(0,0)[b]{$2$}}
\put(65,42){\makebox(0,0)[b]{$0$}}
\put(3,20){\makebox(0,0)[r]{$1$}}
\put(27,20){\makebox(0,0)[l]{$1$}}
\put(67,20){\makebox(0,0)[l]{$1$}}
\put(3,30){\makebox(0,0)[r]{$1$}}
\put(43,30){\makebox(0,0)[r]{$1$}}
\put(67,30){\makebox(0,0)[l]{$1$}}

\end{picture}
}
$$
Finally, select $C_0$ to be $C_3$.
Then $A_1=\{a_2\}$, $B_1=\{b_3\}$, $A_2=\{a_3\}$ and $B_2=\emptyset$.
Therefore,
$$
\mu_3=
\vcenter{\hsize=.4\textwidth\relax
\begin{picture}(55,50)

\put(5,10){\circle*{3}}
\put(5,20){\circle*{3}}
\put(5,30){\circle*{3}}
\put(5,40){\circle*{3}}

\put(35,10){\circle*{3}}
\put(35,40){\circle*{3}}

\put(65,10){\circle*{3}}
\put(65,20){\circle*{3}}
\put(65,30){\circle*{3}}
\put(65,40){\circle*{3}}
\put(25,20){\circle*{3}}
\put(45,30){\circle*{3}}

\put(5,10){\line(0,1){30}}
\put(65,10){\line(0,1){30}}
\put(5,10){\line(1,1){30}}
\put(5,40){\line(1,-1){30}}
\put(35,10){\line(1,1){30}}
\put(35,40){\line(1,-1){30}}

\put(5,8){\makebox(0,0)[t]{$-2$}}
\put(35,8){\makebox(0,0)[t]{$-1$}}
\put(65,8){\makebox(0,0)[t]{$-3$}}
\put(5,42){\makebox(0,0)[b]{$0$}}
\put(35,42){\makebox(0,0)[b]{$2$}}
\put(65,42){\makebox(0,0)[b]{$1$}}
\put(3,20){\makebox(0,0)[r]{$1$}}
\put(27,20){\makebox(0,0)[l]{$1$}}
\put(67,20){\makebox(0,0)[l]{$1$}}
\put(3,30){\makebox(0,0)[r]{$1$}}
\put(43,30){\makebox(0,0)[r]{$1$}}
\put(67,30){\makebox(0,0)[l]{$1$}}

\end{picture}
}
$$
and this function satisfies the desired properties.

Now we show the following.

\newcommand{\assumpast}{{\bf(*)}}
\begin{thm}
\mylabel{thm:chain p trace}
Let $\xi\in \SSSSS^{(0)}$ and $d=\xi(-\infty)$.
Then
$$
T^\xi\in\sqrt{\trace(\omega_{\kcp})}
$$
if and only if
\begin{enumerate}
\item
for any chain $C$ in $P$ with $\starcpx_P(C)$ is not pure,
$\xi^+(C)<d$
and
\item
\mylabel{item:cycle}
for any chains
$C_1$, \ldots, $C_u$, $C'_1$, \ldots, $C'_u$ in $P$ with
\begin{enumerate}
\item[\hskip\labelsep\assumpast]
$\max C_1<\min C'_1>\max C_2<\min C'_2>\cdots >\max C_u<\min C'_u>\max C_1$,
$\max C_i\not\leq\max C_j$ (resp.\ $\min C'_i\not\leq\min C'_j$) for any 
$i$ and $j$ with $i\neq j$,
$C_i$ (resp.\ $C'_i$) is a maximal chain in $(-\infty,\max C_i]$
(resp.\ $[\min C'_i,\infty)$),
and
$\sum_{i=1}^u\rank([\max C_i,\min C'_i])
>\sum_{i=1}^{u-1}\dist(\max C_{i+1},\min C'_i)
+\dist(\max C_1,\min C'_u)$,
\end{enumerate}
it holds that
$\sum_{i=1}^u(\xi^+(C_i)+\xi^+(C'_i))<ud$.
\end{enumerate}
\end{thm}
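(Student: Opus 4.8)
The plan is to derive the theorem from Lemmas \ref{lem:nonpure link}, \ref{lem:cycle} and \ref{lem:adjust sum}, after first recording that conditions (1) and (2) are only reformulations of \assumpa\ and \assumpb. Since $\xi\ge 0$ on $P$ and $\xi\in\SSSSS^{(0)}$, extending a chain to a maximal chain shows $\xi^+(C)\le d$ for every chain $C$, so (1) is literally the contrapositive of \assumpa. For (2) against \assumpb, the basic observation is that if chains satisfy \assumpast\ but $\sum_{i=1}^u(\xi^+(C_i)+\xi^+(C'_i))\ge ud$, then, because $C_i\cup C'_i$ and $C_{i+1}\cup C'_i$ are chains (cyclically), all $u$ quantities $\xi^+(C_i)+\xi^+(C'_i)$ and all $u$ quantities $\xi^+(C_{i+1})+\xi^+(C'_i)$ are at most $d$ while their common sum is at least $ud$; hence every one of them equals $d$, all $\xi^+(C_i)$ coincide, and $\xi^+(C_i)+\xi^+(C'_j)=d$ for all $i,j$. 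Running this observation forwards shows \assumpb\ implies (2), and running it backwards is what lets us invoke Lemma \ref{lem:cycle} below. For the reverse implication (2)$\Rightarrow$\assumpb\ I would use a reversal of the cyclic sequence: replacing $(C_1,C_2,\ldots,C_u,C'_1,\ldots,C'_u)$ by $(C_1,C_u,C_{u-1},\ldots,C_2,\,C'_u,C'_{u-1},\ldots,C'_1)$ keeps the cyclic inequalities, the antichain conditions, and the maximal-chain conditions of \assumpastast\ intact but interchanges the ascending sum $\sum_i\rank([\max C_i,\min C'_i])$ with the descending sum $\sum_{i=1}^{u-1}\dist(\max C_{i+1},\min C'_i)+\dist(\max C_1,\min C'_u)$; since $\rank\ge\dist$ on every interval, a failure of the equality asserted in \assumpb\ would make one of the two orientations satisfy the strict inequality of \assumpast, and then (2) would force $\sum_i(\xi^+(C_i)+\xi^+(C'_i))<ud$, contradicting $\xi^+(C_i)+\xi^+(C'_i)=d$.

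Granting these reformulations, the ``only if'' direction is immediate: if (1) fails there is a chain $C$ with $\xi^+(C)=d$ and $\starcpx_P(C)$ not pure, so Lemma \ref{lem:nonpure link} gives $T^\xi\notin\sqrt{\trace(\omega_{\kcp})}$; and if (2) fails, the observation above produces chains with a cyclic order, the strict rank/distance inequality, and $\xi^+(C_i)+\xi^+(C'_j)=d$ for all $i,j$, so Lemma \ref{lem:cycle} gives $T^\xi\notin\sqrt{\trace(\omega_{\kcp})}$ as well.

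For the ``if'' direction, assume (1) and (2), hence \assumpa\ and \assumpb, and apply Lemma \ref{lem:adjust sum} to obtain $\mu\in\ZZZ^P$ with $\mu(z)=1$ for all $z\in P\setminus\supp\xi$ and with $\mu^+(C)$ equal to a fixed constant $\kappa$ for all $C\in C_\xi^{[d]}$. Extend $\mu$ to $P^-$ by setting $\mu(-\infty):=\kappa+1$ (any value suffices if $C_\xi^{[d]}=\emptyset$), fix a large integer $N$, and put $\eta:=N\xi+\mu$ and $\zeta:=N\xi-\mu$. One then checks directly that $\eta\in\SSSSS^{(1)}$ and $\zeta\in\SSSSS^{(-1)}$: on $P\setminus\supp\xi$ we have $\eta\equiv 1$ and $\zeta\equiv -1$; on $\supp\xi$ the bounds $\eta\ge 1$ and $\zeta\ge -1$ hold because $\xi\ge 1$ there and $N$ is large; and for a maximal chain $C$ the inequalities $\eta(-\infty)\ge\eta^+(C)+1$ and $\zeta(-\infty)\ge\zeta^+(C)-1$ are equalities when $\xi^+(C)=d$ (by the choice of $\mu(-\infty)$ and the constancy of $\mu^+$ on $C_\xi^{[d]}$) and hold with room to spare when $\xi^+(C)<d$, since then $d-\xi^+(C)\ge 1$ and the correction terms coming from $\mu$ are bounded independently of $N$. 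As $\eta+\zeta=2N\xi$, Facts \ref{fact:mc,mf} and \ref{fact:trace of an ideal} yield $(T^\xi)^{2N}=T^\eta T^\zeta\in\omega^{-1}_{\kcp}\omega_{\kcp}=\trace(\omega_{\kcp})$, so $T^\xi\in\sqrt{\trace(\omega_{\kcp})}$.

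The only genuinely delicate ingredient above is the existence of the adjustment function $\mu$, which is exactly Lemma \ref{lem:adjust sum}; with it in hand, the remaining work is the bookkeeping translating (1) and (2) into \assumpa\ and \assumpb---where the reversal argument is the one non-routine point---together with the elementary verification that the constructed $\eta$ and $\zeta$ lie in $\SSSSS^{(1)}$ and $\SSSSS^{(-1)}$.
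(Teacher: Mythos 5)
Your proposal is correct and follows essentially the same route as the paper: the ``only if'' direction from Lemmas \ref{lem:nonpure link} and \ref{lem:cycle}, the translation of (1) and (2) into \assumpa\ and \assumpb\ via the averaging argument and the cyclic reversal (your rotation of the reversed cycle is equivalent to the paper's $(C_u,\ldots,C_1,C'_{u-1},\ldots,C'_1,C'_u)$), and then Lemma \ref{lem:adjust sum} together with $\eta=N\xi+\mu$, $\zeta=N\xi-\mu$ for large $N$. The only difference is that you spell out explicitly why $\sum_i(\xi^+(C_i)+\xi^+(C'_i))\ge ud$ forces $\xi^+(C_i)+\xi^+(C'_j)=d$ for all $i,j$, a step the paper leaves implicit.
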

\begin{proof}
The ``only if'' part follows from Lemmas \ref{lem:nonpure link} and \ref{lem:cycle}.
Now we prove the ``if'' part.

First, note that \assumpb\ of Lemma \ref{lem:adjust sum} is satisfied. 
In fact, if there are chains 
$(C_1, \ldots , C_u, C'_1, \ldots, C'_u)$
which do not satisfy \assumpb\ of Lemma \ref{lem:adjust sum},
then $\sum_{i=1}^u(\xi^+(C_i)+\xi^+(C'_i))=ud$ and
$$
\sum_{i=1}^u\rank([\max C_i,\min C'_i])> 
\sum_{i=1}^{u-1}\dist(\max C_{i+1},\min C'_i)+\dist(\max C_1,\min C'_u)
$$
or
$$
\sum_{i=1}^u\rank([\max C_i,\min C'_i])<
\sum_{i=1}^{u-1}\dist(\max C_{i+1},\min C'_i)+\dist(\max C_1,\min C'_u).
$$
The former case contradicts \ref{item:cycle} 
and in the latter case, set of chains $(C_u,\ldots ,C_1,C'_{u-1},\ldots,C'_1,C'_u)$ 
violates the condition \ref{item:cycle}.

By Lemma \ref{lem:adjust sum}, we see that there exists $\mu\in\ZZZ^P$ such that
$\mu(z)=1$ for any $z\in P\setminus\supp\xi$ and $\mu^+(C)=\mu^+(C')$ for any
$C$, $C'\in C^{[d]}_\xi$.
Set $m\define\mu^+(C)$ for $C\in C^{[d]}_\xi$ if $C^{[d]}_\xi\neq\emptyset$ and
$m\define0$ if $C^{[d]}_\xi=\emptyset$.
Take a huge integer $N$ ($N>\sum_{z\in P}|\mu(z)|+|m|$)
and set
\begin{eqnarray*}
\eta(z)&=&
\left\{
\begin{array}{ll}
N\xi(z)+\mu(z),&\quad z\in P,\\
Nd+1+m,&\quad z=-\infty,
\end{array}
\right.
\\
\zeta(z)&=&
\left\{
\begin{array}{ll}
N\xi(z)-\mu(z),&\quad z\in P,\\
Nd-1-m,&\quad z=-\infty.
\end{array}
\right.
\end{eqnarray*}
Then $\eta(z)\geq1$ and $\zeta(z)\geq -1$ for any $z\in P$,
since $\mu(z)=1$ for any $z\in P\setminus\supp\xi$, 
$\xi(z)>0$ for any $z\in\supp\xi$
and $N$ is a huge integer.

Let $C$ be an arbitrary maximal chain in $P$.
If $C\not\in C^{[d]}_\xi$, then $\xi^+(C)<d$.
Thus,
$\eta^+(C)=N\xi^+(C)+\mu^+(C)<Nd+m=\eta(-\infty)-1$
and
$\zeta^+(C)=N\xi^+(C)-\mu^+(C)<Nd-m=\zeta(-\infty)+1$
since $N$ is a huge integer
($\mu^+(C)$ may not be equal to  $m$ in this case).
If $C\in C^{[d]}_\xi$, then
$\eta^+(C)=N\xi^+(C)+\mu^+(C)=Nd+m=\eta(-\infty)-1$
and
$\zeta^+(C)=N\xi^+(C)-\mu^+(C)=Nd-m=\zeta(-\infty)+1$.
Thus, we see that $\eta\in\SSSSS^{(1)}$ and
$\zeta\in\SSSSS^{(-1)}$.
Since $\eta+\zeta=2N\xi$, we see that
$(T^\xi)^{2N}\in\trace(\omega_{\kcp})$
by Fact \ref{fact:trace of an ideal}.
\end{proof}


\section{The case of order polytopes}

\mylabel{sec:order}

In this section, we consider the trace
of the canonical module of the Ehrhart ring
$\kop$ of the order polytope $\msOOO(P)$ of a poset $P$.
By considering the set of edges of the Hasse diagram of a poset, 
we define a new poset from a given one.

\begin{definition}
\rm
\mylabel{def:cr}
Let $Q$ be a finite poset which is not an antichain.
We define the covering relation poset, denoted by $\CR(Q)$, of $Q$
as follows.
The base set of $\CR(Q)$ is
$\{(x,y)\mid x\covered y$ in $Q\}$
and for $(x_1, y_1)$, $(x_2, y_2)\in\CR(Q)$,
we define
$(x_1,y_1)<(x_2,y_2)$ in $\CR(Q)\iff y_1\leq x_2$ in $Q$.
\end{definition}

First we note the following fact.

\begin{lemma}
\mylabel{lem:cr corr}
There is  a one to one correspondence between the sets
$$
\FFFFF\define\{\nu\in\ZZZ^{P^\pm}\mid\nu(\infty)=0\}
$$
and
$$
\GGGGG\define
\left\{\xi\in\ZZZ^{\CR(P^\pm)}\left|\
\vcenter{\hsize=.5\textwidth\relax
\noindent
$\xi^+(C)=\xi^+(C')$ for any maximal chains $C$ and $C'$ in $\CR(P^\pm)$}
\right.\right\}
$$
by
$\Phi\colon\FFFFF\to\GGGGG$ and $\Psi\colon\GGGGG\to\FFFFF$
defined by
$\Phi(\nu)(x,y)=\nu(x)-\nu(y)$
for $\nu\in\FFFFF$ and $(x,y)\in\CR(P^\pm)$
and
$\Psi(\xi)(x)=\sum_{i=1}^t\xi(x_{i-1},x_i)$
for $\xi\in\GGGGG$ and $x\in P^-$,
where
$x=x_0\covered x_1\covered\cdots\covered x_{t-1}\covered x_t=\infty$.
\end{lemma}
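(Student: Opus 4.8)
The plan is to establish, in order: (i) a combinatorial dictionary identifying the maximal chains of $\CR(P^\pm)$ with the saturated chains from $-\infty$ to $\infty$ in $P^\pm$; (ii) that $\Phi$ maps into $\GGGGG$ and that $\Psi$ is well defined and maps into $\FFFFF$; and (iii) that $\Psi\circ\Phi=\mathrm{id}_{\FFFFF}$ and $\Phi\circ\Psi=\mathrm{id}_{\GGGGG}$. For (i) I would first note that $(x,y)\in\CR(P^\pm)$ is minimal (resp.\ maximal) exactly when $x=-\infty$ (resp.\ $y=\infty$), since $(x',y')<(x,y)$ means $y'\leq x$ and there is such a pair precisely when $x$ is not minimal in $P^\pm$. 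Next, if $(x',y')<(x'',y'')$ are consecutive in a chain of $\CR(P^\pm)$ with $y'<x''$ in $P^\pm$, then picking $w$ with $y'\covered w\leq x''$ produces the element $(y',w)$ strictly between them; hence a maximal chain of $\CR(P^\pm)$ has the form $(z_0,z_1)<(z_1,z_2)<\cdots<(z_{k-1},z_k)$ coming from a saturated chain $-\infty=z_0\covered z_1\covered\cdots\covered z_k=\infty$, and conversely every such saturated chain yields a maximal chain. Concatenating a saturated chain from $-\infty$ to $x$ with one from $x$ to $\infty$ then shows that every saturated chain from $x$ to $\infty$ occurs as the ``upper segment'' of some maximal chain of $\CR(P^\pm)$.

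Given this, (ii) is mostly bookkeeping except for the well-definedness of $\Psi$. Clearly $\Phi(\nu)\in\ZZZ^{\CR(P^\pm)}$, and along the maximal chain coming from $-\infty=z_0\covered\cdots\covered z_k=\infty$ the sum telescopes: $\Phi(\nu)^+(C)=\sum_{i=1}^{k}\bigl(\nu(z_{i-1})-\nu(z_i)\bigr)=\nu(-\infty)-\nu(\infty)=\nu(-\infty)$, independent of $C$, so $\Phi(\nu)\in\GGGGG$. For $\Psi$, given two saturated chains from $x$ to $\infty$ I would fix one saturated chain from $-\infty$ to $x$ (which exists since $-\infty$ is the least element of $P^\pm$), concatenate it with each of the two chains to get two maximal chains of $\CR(P^\pm)$ on which $\xi^+$ agrees by the defining condition of $\GGGGG$, and then cancel the common lower segment: the two edge sets being disjoint, $\xi^+$ is additive over the concatenation, so the two ``upper'' partial sums coincide. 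This is exactly the assertion that $\Psi(\xi)(x)$ is independent of the chosen chain; taking $x=\infty$ (the empty chain) gives $\Psi(\xi)(\infty)=0$, so $\Psi(\xi)\in\FFFFF$.

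For (iii), $\Psi(\Phi(\nu))(x)=\sum_{i=1}^{t}\bigl(\nu(x_{i-1})-\nu(x_i)\bigr)=\nu(x)-\nu(\infty)=\nu(x)$ by telescoping along any saturated chain $x=x_0\covered\cdots\covered x_t=\infty$. Conversely, given $(x,y)\in\CR(P^\pm)$, prepending the edge $(x,y)$ to a saturated chain from $y$ to $\infty$ yields a saturated chain from $x$ to $\infty$, so $\Psi(\xi)(x)=\xi(x,y)+\Psi(\xi)(y)$ (this stays valid when $y=\infty$, where $\Psi(\xi)(\infty)=0$), whence $\Phi(\Psi(\xi))(x,y)=\Psi(\xi)(x)-\Psi(\xi)(y)=\xi(x,y)$. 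Thus $\Phi$ and $\Psi$ are mutually inverse, proving the claimed bijection. The only step needing genuine care is the well-definedness of $\Psi$ in (ii): this is precisely where the defining condition of $\GGGGG$ is used, and one must make sure the lower-segment construction is available for every $x\in P^-$, which it is because $P^\pm$ has a minimum.
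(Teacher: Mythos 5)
Your proof is correct and follows the same line the paper intends: the paper states this lemma without proof, only remarking afterwards that $\sum_{i=1}^t\xi(x_{i-1},x_i)$ is independent of the chosen saturated chain because $\xi^+$ is constant on maximal chains of $\CR(P^\pm)$, which is exactly the well-definedness point you identify and justify via the identification of maximal chains of $\CR(P^\pm)$ with saturated chains from $-\infty$ to $\infty$. Your telescoping verifications that $\Phi$ and $\Psi$ land in the right sets and are mutually inverse fill in the remaining routine details correctly.
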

Note that $\sum_{i=1}^t\xi(x_{i-1},x_i)$ is independent of the saturated chain
$x_0$, $x_1$, \ldots, $x_t$ from $x$ to $\infty$ since
$\xi^+(C)=\xi^+(C')$ for any maximal chains $C$ and $C'$ in $\CR(P^\pm)$.
Note also that
$\Psi(\xi_1+\xi_2)=\Psi(\xi_1)+\Psi(\xi_2)$.
Moreover,
$\nu(x)-\nu(y)\geq n$ for any $x$, $y\in P^\pm$ with $x\covered y$
 if and only if $\Phi(\nu)(\alpha)\geq n$ for any $\alpha\in\CR(P^\pm)$.

Next we note the following fact.

\begin{lemma}
\mylabel{lem:pure corr}
Let $Q$ be a poset, $(x_1,y_1)$, $(x_2, y_2)\in\CR(Q)$
and $(x_1,y_1)<(x_2,y_2)$ in $\CR(Q)$.
Then $\rank([(x_1,y_1),(x_2,y_2)]_{\CR(Q)})=\rank([y_1,x_2]_Q)+1$
and
$\dist_{\CR(Q)}((x_1,y_1),(x_2,y_2))=\dist_Q(y_1,x_2)+1$.
In particular,
$[(x_1,y_1),(x_2,y_2)]_{\CR(Q)}$ is pure if and only if
$[y_1,x_2]_Q$ is pure.
\end{lemma}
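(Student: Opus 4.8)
The plan is to establish a dictionary between saturated chains in the interval $[(x_1,y_1),(x_2,y_2)]_{\CR(Q)}$ and saturated chains in $[y_1,x_2]_Q$, and then read off both the rank statement and the distance statement from this dictionary. First I would analyze what the covering relation in $\CR(Q)$ looks like: for $(a,b)$, $(c,d)\in\CR(Q)$, one has $(a,b)\covered(c,d)$ in $\CR(Q)$ precisely when $(a,b)<(c,d)$, i.e.\ $b\leq c$ in $Q$, and there is no element strictly in between. I would check that this forces either $b=c$ (so the two edges share a vertex) or $b\covered c$ in $Q$; indeed if $b<c$ strictly and $b$ is not covered by $c$, pick $b'$ with $b<b'<c$; then $b\covered b'$ or we can descend to a cover of $b$ below $b'$, producing an intermediate element of $\CR(Q)$, a contradiction. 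Conversely both $b=c$ and $b\covered c$ genuinely give covers in $\CR(Q)$. So a saturated chain $(x_1,y_1)=(a_0,b_0)\covered(a_1,b_1)\covered\cdots\covered(a_k,b_k)=(x_2,y_2)$ in $\CR(Q)$ is exactly the data of a walk in $Q$ that alternates between "stay at a vertex, move along an edge" steps, and the sequence $y_1=b_0, a_1, a_2, \ldots$ (reading the relevant vertices) traces out a saturated chain from $y_1$ to $x_2$ in $Q$.

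Making that precise is the main step. Given a saturated chain of length $k$ in $\CR(Q)$ from $(x_1,y_1)$ to $(x_2,y_2)$, I would extract from it a saturated chain in $Q$ from $y_1$ to $x_2$ as follows: each cover step $(a_{i-1},b_{i-1})\covered(a_i,b_i)$ is either of "shared-vertex type" ($b_{i-1}=a_i$) or "edge type" ($b_{i-1}\covered a_i$ in $Q$); the composite walk $y_1=b_0\leq a_1\leq\cdots$, after deleting repetitions, is a saturated chain in $Q$ from $y_1$ to $x_2$. If $j$ of the $k$ steps are of edge type, the resulting $Q$-chain has length $j$, and one checks $k=j+1$: the first step must be edge type (since $(x_1,y_1)$ is an edge with top vertex $y_1$, and the next edge's bottom vertex is $\geq y_1$; more carefully, count that each "new edge" in the $\CR(Q)$-chain is introduced by exactly one edge-type step, and there are $j+1$ edges total, namely $(x_1,y_1)$, the $j$ intermediate ones, and $(x_2,y_2)$ — wait, this needs care). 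The cleaner bookkeeping: there are $k+1$ edges listed $(a_0,b_0),\ldots,(a_k,b_k)$ but consecutive equal ones are glued, and the number of distinct edges equals $1+(\text{number of edge-type steps})=1+j$; meanwhile the associated $Q$-chain $y_1\leq x_2$ visits the tops/bottoms and has length $j$; hence $k\geq j$, and in fact a shortest such $\CR(Q)$-chain uses no shared-vertex steps that are redundant, giving $k=j+1$ once we account for... Rather than belabor this, I would set it up as: length in $\CR(Q)$ = (length of induced chain in $Q$) $+1$, proved by induction on $k$ with the single cover step as base case, since a shared-vertex cover $(a,b)\covered(a',b)$ with $a\covered a'$ contributes one to the $\CR(Q)$-length and zero to the $Q$-length... no — here $a\covered a'$ means the $Q$-chain from $a'$ onward gains nothing at this vertex. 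The right invariant to induct on is: \emph{length in }$\CR(Q)$ \emph{ minus length of the tail }$[y_1,x_2]_Q$\emph{-chain is constantly }$1$.

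Once the bijective-up-to-length correspondence is in place, both conclusions follow immediately: $\rank([(x_1,y_1),(x_2,y_2)]_{\CR(Q)})$, the \emph{maximum} length of a saturated chain, equals $1+$ the maximum length of a saturated chain in $[y_1,x_2]_Q$, which is $\rank([y_1,x_2]_Q)+1$; and $\dist_{\CR(Q)}((x_1,y_1),(x_2,y_2))$, the \emph{minimum} length, equals $1+\dist_Q(y_1,x_2)$. For the "in particular" clause: $[(x_1,y_1),(x_2,y_2)]_{\CR(Q)}$ is pure iff its every maximal saturated chain has the same length iff $\rank=\dist$ on this interval iff $\rank([y_1,x_2]_Q)+1=\dist_Q(y_1,x_2)+1$ iff $[y_1,x_2]_Q$ is pure. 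I expect the main obstacle to be purely the careful case analysis of covers in $\CR(Q)$ and the precise length bookkeeping in the induction — there is no deep idea, but one must be scrupulous that every saturated chain in $\CR(Q)$ really does decompose into edge-type and shared-vertex-type cover steps with no third possibility, and that the $+1$ is uniform (it comes from the initial edge $(x_1,y_1)$, which "costs" one step in $\CR(Q)$ before any progress is made toward $x_2$ in $Q$).
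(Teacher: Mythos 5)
The paper actually states this lemma without a written proof, so the only question is whether your argument stands on its own; it does not quite, because your description of the covering relation in $\CR(Q)$ is wrong at exactly the point your bookkeeping depends on. You claim that $b\covered c$ in $Q$ makes $(a,b)\covered(c,d)$ a cover in $\CR(Q)$. It does not: if $b\covered c$ then $(b,c)$ is itself an element of $\CR(Q)$, and $(a,b)<(b,c)<(c,d)$ (the first because $b\leq b$, the second because $c\leq c$, and $(b,c)$ differs from both endpoints since $a<b<c$). The same device, inserting $(b,z)$ for $b\covered z\leq c$, disposes of every pair with $b<c$, so the covers of $\CR(Q)$ are exactly the shared-vertex pairs: $(a,b)\covered(c,d)$ if and only if $b=c$. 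Your ``edge-type'' steps therefore never occur in a saturated chain, and the error is not harmless: if they were covers, the distance formula you are proving would be false --- in a four-element chain $a\covered b\covered c\covered d$ your rule makes $(a,b)\covered(c,d)$, giving $\dist_{\CR(Q)}((a,b),(c,d))=1$, whereas $\dist_Q(b,c)+1=2$. This is precisely where your write-up stalls: the $k=j+1$ count, the ``wait, this needs care,'' and the unfinished induction are all symptoms of trying to account for a kind of cover that does not exist.

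Once the covering relation is corrected, your dictionary works with no induction and no case analysis. A saturated chain $(x_1,y_1)=(a_0,b_0)\covered(a_1,b_1)\covered\cdots\covered(a_k,b_k)=(x_2,y_2)$ satisfies $b_{i-1}=a_i$ throughout, so it is literally the list of consecutive edges of a saturated chain $x_1\covered y_1\covered b_1\covered\cdots\covered b_{k-1}\covered y_2$ in $Q$ with $b_{k-1}=x_2$, whose middle segment from $y_1$ to $x_2$ has length $k-1$; conversely, every saturated chain $y_1=z_0\covered z_1\covered\cdots\covered z_j=x_2$ in $Q$ yields the saturated chain $(x_1,z_0),(z_0,z_1),\ldots,(z_{j-1},z_j),(z_j,y_2)$ of length $j+1$ in $\CR(Q)$. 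This bijection shifts lengths by exactly $1$, which gives both the rank and the distance identities at once; your deduction of the purity statement from the equivalence of $\rank=\dist$ on an interval is fine as written.
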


In order to prove the main theorem of this section, we note the following fact.

\begin{lemma}
\mylabel{lem:del eq}
Let $\nu\in\TTTTT^{(0)}$.
Assume that for any $a_1$, \ldots, $a_t$, $b_1$, \ldots, $b_t\in P^\pm$ with
$a_1<b_1>a_2<b_2>\cdots>a_t<b_t>a_1$,
$a_i\not\leq a_j$ (resp.\ $b_i\not\leq b_j$) 
for any $i$ and $j$ with $i\neq j$
and
$\sum_{i=1}^t\rank([a_i,b_i])>
\sum_{i=1}^{t-1}\dist(a_{i+1},b_i)+\dist(a_1,b_t)$,
it holds that 
$\sum_{i=1}^t\nu(a_i)>\sum_{i=1}^t\nu(b_i)$.
Then for any $c_1$, \ldots, $c_u$, $d_1$, \ldots, $d_u\in P^\pm$ with
$c_1\leq d_1\geq c_2\leq d_2\geq\cdots\geq c_u\leq d_u\geq c_1$
and
$\sum_{i=1}^u\rank([c_i,d_i])>
\sum_{i=1}^{u-1}\dist(c_{i+1},d_i)+\dist(c_1,d_u)$,
it holds that
$\sum_{i=1}^u\nu(c_i)>\sum_{i=1}^u\nu(d_i)$.
\end{lemma}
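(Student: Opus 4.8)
The plan is to argue by taking a counterexample $(c_1,\ldots,c_u,d_1,\ldots,d_u)$ with $u$ minimal and deriving a contradiction. Two observations set everything up. Since $\nu\in\TTTTT^{(0)}$ and $c_i\leq d_i$, we have $\nu(c_i)\geq\nu(d_i)$ for each $i$, so $\sum_i\nu(c_i)\geq\sum_i\nu(d_i)$ holds automatically; hence a counterexample satisfies $\sum_i\nu(c_i)=\sum_i\nu(d_i)$, and therefore $\nu(c_i)=\nu(d_i)$ for every $i$. Feeding in the cycle relations $\nu(c_{i+1})\geq\nu(d_i)$ and $\nu(c_1)\geq\nu(d_u)$ then forces $\nu(c_1)=\nu(d_1)=\cdots=\nu(c_u)=\nu(d_u)$; call this common value $v$. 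Throughout I will use that $\rank$ is superadditive and $\dist$ subadditive along $\leq$, and that $\rank([x,y])\geq\dist(x,y)$.

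I then show that a minimal counterexample admits none of three reductions, each of which would produce a strictly shorter valid cycle that is again a counterexample. (i) If $c_i=d_i$ for some $i$, delete the pair $(c_i,d_i)$: superadditivity of $\rank$ and subadditivity of $\dist$ preserve the strict rank inequality, and $\nu(c_i)=\nu(d_i)$ keeps the $\nu$-sums balanced. (ii) If a descending step is an equality $d_i=c_{i+1}$ (or $d_u=c_1$), merge the two adjacent pairs into $(c_i,d_{i+1})$: the merged interval has at least the combined rank, the deleted distance term was $0$, and $\nu(d_i)=\nu(c_{i+1})$ again balances the sums. (iii) If the $c$-side antichain condition fails, say $c_i\leq c_j$ with $i\neq j$, split the cycle into $\mathcal{B}$ carrying the pairs at positions $i,\ldots,j-1$ (closed up by $d_{j-1}\geq c_j\geq c_i$) and $\mathcal{A}$ carrying the remaining positions, but with the pair at position $j$ replaced by $(c_i,d_j)$ (closed up by the original relation $d_{i-1}\geq c_i$). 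Both sub-cycles are strictly shorter than $u$. A bookkeeping computation gives $\mathrm{exc}(\mathcal{A})+\mathrm{exc}(\mathcal{B})\geq\mathrm{exc}$ of the original cycle, where $\mathrm{exc}$ denotes the rank-sum minus the distance-sum: the extra rank on the $\mathcal{A}$-side is at least $\rank([c_i,c_j])$ by superadditivity, while the only changed distance term, $\dist(c_i,d_{j-1})$ in place of $\dist(c_j,d_{j-1})$, increases by at most $\dist(c_i,c_j)\leq\rank([c_i,c_j])$ by subadditivity, so the two $\rank([c_i,c_j])$'s cancel. Hence one of $\mathcal{A},\mathcal{B}$ satisfies the rank inequality, so by minimality of $u$ it is not a counterexample and its $c$-$\nu$-sum strictly exceeds its $d$-$\nu$-sum; but every element occurring in either sub-cycle has $\nu$-value $v$ (here it matters that the inserted element is $c_i$, not $c_j$), so that difference equals $0$ --- a contradiction. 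The $d$-side failure $d_i\leq d_j$ is excluded by the mirror-image split.

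Consequently a minimal counterexample has all $c_i<d_i$, all strict descending steps, and both antichain conditions, so it is precisely a configuration to which the hypothesis of the lemma applies; this yields $\sum_i\nu(c_i)>\sum_i\nu(d_i)$, contradicting $\sum_i\nu(c_i)=\sum_i\nu(d_i)$. Hence no counterexample exists.

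The real work is reduction (iii): one must choose the two sub-cycles so that their $c$- and $d$-multisets recover those of the original up to the single replacement of $c_j$ by $c_i$ (harmless since $\nu(c_i)=\nu(c_j)=v$ in a minimal counterexample), so that the excesses add up, and so that both are genuinely shorter. The delicate point is the distance bookkeeping at the gluing point: $\dist$ is not monotone in its lower endpoint, which is exactly why one needs the term $\rank([c_i,c_j])$ available on the $\mathcal{A}$-side to absorb the discrepancy.
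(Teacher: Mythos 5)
Your proposal is correct and follows essentially the same route as the paper: induction on the cycle length (phrased as a minimal counterexample), disposing of the degenerate equalities $c_i=d_i$ and $d_i=c_{i+1}$, and, for a failure $c_i\leq c_j$ of the antichain condition, splitting into the same two shorter cycles with the same excess bookkeeping via superadditivity of $\rank$ and subadditivity of $\dist$ absorbing $\dist(c_i,c_j)\leq\rank([c_i,c_j])$. The only (cosmetic) difference is that you first force all $\nu$-values in a counterexample to coincide, which slightly streamlines the final contradiction compared with the paper's term-by-term nonnegativity argument.
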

\begin{proof}
We prove by induction on $u$.
If $u=1$, then since 
$c_1\leq d_1$ and
$\rank([c_1,d_1])>\dist(c_1,d_1)$,
we see that $c_1<d_1$.
Thus, the result follows from the assumption.

Assume that $u>1$.
If $c_1<d_1>c_2<d_2>\cdots>c_u<d_u>c_1$, 
$c_i\not\leq c_j$ (resp.\ $d_i\not\leq d_j$)
for any $i$ and $j$ with $i\neq j$
then the result follows from
the assumption.
Suppose first that
$c_1<d_1>c_2<d_2>\cdots>c_u<d_u>c_1$ and
$c_{j_1}\leq c_{j_2}$ with $1\leq j_1<j_2\leq u$.
Then
$$c_1<d_1>\cdots>c_{j_1}<d_{j_2}>\cdots>c_u<d_u>c_1$$
and
$$c_{j_1}<d_{j_1}>\cdots>c_{j_2-1}<d_{j_2-1}>c_{j_1}$$
and
\begin{eqnarray*}
0&<&
\sum_{i=1}^u\rank([c_i,d_i])-\sum_{i=1}^{u-1}\dist(c_{i+1},d_i)
-\dist(c_1,d_u)\\
&\leq&
\sum_{i=1}^{j_1-1}\rank([c_i,d_i])
+\rank([c_{j_2},d_{j_2}])
+\sum_{i=j_2+1}^u\rank([c_i,d_i])\\
&&\qquad
-\sum_{i=1}^{j_1-1}\dist(c_{i+1},d_i)-\sum_{i=j_2}^{u-1}\dist(c_{i+1},d_i)
-\dist(c_1,d_u)\\
&&\qquad
+\sum_{i=j_1}^{j_2-1}\rank([c_i,d_i])
-\sum_{i=j_1}^{j_2-2}\dist(c_{i+1},d_i)
-\dist(c_{j_2},d_{j_2-1})\\
&&\qquad
+\rank([c_{j_1},c_{j_2}])
-\dist(c_{j_1},c_{j_2})\\
&\leq&
\sum_{i=1}^{j_1-1}\rank([c_i,d_i])+
\rank([c_{j_1},d_{j_2}])
+\sum_{i=j_2+1}^u\rank([c_i,d_i])\\
&&\qquad
-\sum_{i=1}^{j_1-1}\dist(c_{i+1},d_i)-\sum_{i=j_2}^{u-1}\dist(c_{i+1},d_i)
-\dist(c_1,d_u)\\
&&\qquad
+\sum_{i=j_1}^{j_2-1}\rank([c_i,d_i])
-\sum_{i=j_1}^{j_2-2}\dist(c_{i+1},d_i)
-\dist(c_{j_1},d_{j_2-1}).
\end{eqnarray*}
Thus,
\begin{eqnarray*}
0&<&\sum_{i=1}^{j_1-1}\rank([c_i,d_i])+
\rank([c_{j_1},d_{j_2}])
+\sum_{i=j_2+1}^u\rank([c_i,d_i])\\
&&\qquad
-\sum_{i=1}^{j_1-1}\dist(c_{i+1},d_i)-\sum_{i=j_2}^{u-1}\dist(c_{i+1},d_i)
-\dist(c_1,d_u)\\
\end{eqnarray*}
or
\begin{eqnarray*}
0&<&
\sum_{i=j_1}^{j_2-1}\rank([c_i,d_i])
-\sum_{i=j_1}^{j_2-2}\dist(c_{i+1},d_i)
-\dist(c_{j_1},d_{j_2-1}).
\end{eqnarray*}
Since $u-(j_2-j_1)<u$ and $j_2-j_1<u$,
we see by induction hypothesis that
$$
\sum_{i=1}^{j_1-1}(\nu(c_i)-\nu(d_i))+
\sum_{i=j_2+1}^{u}(\nu(c_i)-\nu(d_i))>0
$$
or
$$
\sum_{i=j_1}^{j_2-1}(\nu(c_i)-\nu(d_i))>0.
$$
Since $\nu(c_i)\geq \nu(d_i)$ for any $i$, we see that
$$
\sum_{i=1}^u(\nu(c_i)-\nu(d_i))>0.
$$
The other cases are proved similarly.

Next suppose that $c_i=d_i$ with $1\leq i\leq u-1$.
set
\begin{eqnarray*}
c'_j&\define&
\left\{
\begin{array}{ll}
c_j,&\quad j\leq i-1,\\
c_{j+1},&\quad j\geq i,
\end{array}
\right.
\\
d'_j&\define&
\left\{
\begin{array}{ll}
d_j,&\quad j\leq i-1,\\
d_{j+1},&\quad j\geq i,
\end{array}
\right.
\end{eqnarray*}
for $1\leq j\leq u-1$.
Then
$c'_1\leq d'_1\geq c'_2\leq d'_2\geq\cdots \geq c'_{u-1}\leq d'_{u-1}\geq c'_1$.
Further,
\begin{eqnarray*}
\sum_{j=1}^{u-1}\rank([c'_j,d'_j])&=&
\sum_{j=1}^{i-1}\rank([c_j,d_j])+
\sum_{j=i+1}^{u}\rank([c_j,d_j])\\
&=&
\sum_{j=1}^{u}\rank([c_j,d_j])
\end{eqnarray*}
since $\rank([c_i,d_i])=0$.
On the other hand,
\begin{eqnarray*}
&&
\sum_{j=1}^{u-2}\dist(c'_{j+1},d'_j)+\dist(c'_1,d'_{u-1})\\
&=&
\sum_{j=1}^{i-2}\dist(c_{j+1},d_j)+
\dist(c_{i+1},d_{i-1})+
\sum_{j=i+1}^{u-1}\dist(c_{j+1},d_j)+
\dist(c_1,d_u)
\\
&\leq&
\sum_{j=1}^{i-2}\dist(c_{j+1},d_j)+
\dist(c_{i+1},d_{i})+
\dist(c_{i},d_{i-1})+
\sum_{j=i+1}^{u-1}\dist(c_{j+1},d_j)+
\dist(c_1,d_u)\\
&=&\sum_{j=1}^{u-1}\dist(c_{j+1},d_j)+\dist (c_1,d_u)
\end{eqnarray*}
since $c_i=d_i$.
Therefore,
$$
\sum_{j=1}^{u-1}\rank([c'_j,d'_j])>
\sum_{j=1}^{u-1}\dist(c'_{j+1},d'_j)+\dist(c'_1,d'_{u-1}).
$$
Thus, we see by induction hypothesis, that
$$
\sum_{j=1}^u\nu(c_j)-\sum_{j=1}^u\nu(d_j)
=
\sum_{j=1}^{u-1}\nu(c'_j)-\sum_{j=1}^{u-1}\nu(d'_j)>0.
$$

The cases where $i=u$, $c_{\ell+1}=d_\ell$ for some $\ell$ with
$1\leq \ell\leq u-1$ or $c_1=d_u$ are proved similarly.
\end{proof}

Now we state the following.

\newcommand{\assumpstar}{\mbox{\bf($\star$)}}
\begin{thm}
\mylabel{thm:order p trace}
Let $\nu\in\TTTTT^{(0)}$.
Then
$$
T^\nu\in\sqrt{\trace(\omega_{\kop})}
$$
if and only if for any $a_1$, \ldots, $a_u$,
$b_1$, \ldots, $b_u\in P^\pm$ with
\begin{enumerate}
\item[\hskip-\labelsep\assumpstar]
$a_1< b_1> a_2< b_2>\cdots> a_u< b_u> a_1$,
$a_i\not\leq a_j$ (resp.\ $b_i\not\leq b_j$) for any $i$ and $j$ with $i\neq j$
and
$\sum_{i=1}^u\rank([a_i,b_i])>
\sum_{i=1}^{u-1}\dist(a_{i+1},b_i)+\dist(a_1,b_u)$,
\end{enumerate}
it holds that
$\sum_{i=1}^u\nu(a_i)>\sum_{i=1}^u\nu(b_i)$.
\end{thm}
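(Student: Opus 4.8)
The plan is to derive the statement from Theorem~\ref{thm:chain p trace}, applied to the covering relation poset $\CR(P^\pm)$, by transporting $\nu$ across the bijection $\Phi,\Psi$ of Lemma~\ref{lem:cr corr}. Write $d\define\nu(-\infty)$, set $\xi\define\Phi(\nu)$ and extend it to $\CR(P^\pm)^-$ by $\xi(-\infty)\define d$. Since $\nu\in\TTTTT^{(0)}$ one has $\xi(\alpha)\geq 0$ for all $\alpha\in\CR(P^\pm)$, and since a maximal chain of $\CR(P^\pm)$ corresponds to a saturated chain $-\infty=v_0\covered v_1\covered\cdots\covered v_k=\infty$ in $P^\pm$, its $\xi$-sum telescopes to $\nu(-\infty)-\nu(\infty)=d$. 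Hence $\xi\in\SSSSS^{(0)}(\CR(P^\pm))$ and, crucially, $C^{[d]}_\xi$ is the set of \emph{all} maximal chains of $\CR(P^\pm)$.

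The first step is to check that $T^\nu\in\sqrt{\trace(\omega_{\kop})}$ if and only if $T^\xi\in\sqrt{\trace(\omega_{E_\KKK[\msCCC(\CR(P^\pm))]})}$. As in Section~\ref{sec:chain} the former means $N\nu=\eta+\zeta$ for some $N\geq1$, $\eta\in\TTTTT^{(1)}$, $\zeta\in\TTTTT^{(-1)}$, and the latter means $N\xi=\eta'+\zeta'$ with $\eta'\in\SSSSS^{(1)}(\CR(P^\pm))$, $\zeta'\in\SSSSS^{(-1)}(\CR(P^\pm))$. The map $\Phi$, extended on the degree coordinate by $\Phi(\eta)(-\infty)\define\eta(-\infty)+1$ and $\Phi(\zeta)(-\infty)\define\zeta(-\infty)-1$, sends $\TTTTT^{(\pm1)}$ into $\SSSSS^{(\pm1)}(\CR(P^\pm))$, which gives one direction. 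For the other, if $N\xi=\eta'+\zeta'$ then for each maximal chain $C$ of $\CR(P^\pm)$ the bounds $(\eta')^+(C)\leq\eta'(-\infty)-1$ and $(\zeta')^+(C)\leq\zeta'(-\infty)+1$ add up to $Nd=N\xi(-\infty)=\eta'(-\infty)+\zeta'(-\infty)$, hence both are equalities; so $(\eta')^+$ and $(\zeta')^+$ are constant on maximal chains, i.e.\ the restrictions of $\eta'$ and $\zeta'$ to $\CR(P^\pm)$ lie in $\GGGGG$, and $\beta\define\Psi(\eta')\in\TTTTT^{(1)}$, $\alpha\define\Psi(\zeta')\in\TTTTT^{(-1)}$ satisfy $\alpha+\beta=\Psi(N\xi)=N\nu$. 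It then remains to match \assumpstar\ against conditions~(1) and (2) of Theorem~\ref{thm:chain p trace} for the pair $(\CR(P^\pm),\xi)$.

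For that I would translate the two conditions. For a chain $C=\{(x_0,y_0)<\cdots<(x_t,y_t)\}$ in $\CR(P^\pm)$, the star $\starcpx_{\CR(P^\pm)}(C)$ is the ordinal sum of $\CR([-\infty,x_0])$, the $\CR([y_{j-1},x_j])$ $(1\leq j\leq t)$, and $\CR([y_t,\infty])$ with the points of $C$ interleaved, so by the covering description underlying Lemma~\ref{lem:pure corr} it is pure iff each of the $P^\pm$-intervals $[-\infty,x_0]$, $[y_{j-1},x_j]$, $[y_t,\infty]$ is pure; and $\xi^+(C)=\nu(x_0)-\nu(y_t)-\sum_{j=1}^t(\nu(y_{j-1})-\nu(x_j))\leq d$, with equality forcing $\nu$ to be constant on each of those intervals. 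Together these identify condition~(1) for $(\CR(P^\pm),\xi)$ with the $u=1$ instances of \assumpstar. For condition~(2), if $\max C_i=(x_i,y_i)$, $\min C'_i=(x'_i,y'_i)$ and $C_i$ (resp.\ $C'_i$) is a maximal chain in $(-\infty,\max C_i]$ (resp.\ $[\min C'_i,\infty)$), telescoping gives $\xi^+(C_i)=d-\nu(y_i)$ and $\xi^+(C'_i)=\nu(x'_i)$, while Lemma~\ref{lem:pure corr} turns $\sum\rank([\max C_i,\min C'_i])>\sum\dist(\max C_{i+1},\min C'_i)+\dist(\max C_1,\min C'_u)$ into the literally identical inequality $\sum\rank([y_i,x'_i])>\sum\dist(y_{i+1},x'_i)+\dist(y_1,x'_u)$ in $P^\pm$ (the $+1$'s cancel), with $y_i\leq x'_i$ and $y_{i+1}\leq x'_i$. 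Hence condition~(2)'s conclusion $\sum(\xi^+(C_i)+\xi^+(C'_i))<ud$ reads $\sum\nu(x'_i)<\sum\nu(y_i)$, and Lemma~\ref{lem:del eq} is exactly what bridges this: its hypothesis is \assumpstar\ and its conclusion handles the weakly cyclic sequences $y_1\leq x'_1\geq y_2\leq\cdots$ occurring here. Conversely, a strict cyclic sequence as in \assumpstar\ with $u\geq2$ (forcing $a_i\neq-\infty$, $b_i\neq\infty$) is realized by taking $\max C_i\define(p_i,a_i)$ for some $p_i\covered a_i$, $\min C'_i\define(b_i,w_i)$ for some $b_i\covered w_i$, together with the corresponding maximal chains; the antichain and rank/distance hypotheses of condition~(2) transfer, and one reads off $\sum\nu(a_i)>\sum\nu(b_i)$. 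The degenerate $u=1$ (and $a_1=-\infty$ or $b_1=\infty$) cases come from condition~(1) applied to a ``broken'' maximal chain of $\CR(P^\pm)$ that jumps over the interval $[a_1,b_1]$.

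The main obstacle is the bookkeeping of the last paragraph: establishing that purity of stars in $\CR(P^\pm)$ corresponds to purity of intervals of $P^\pm$, tracking how the $+1$ shifts of Lemma~\ref{lem:pure corr} cancel in the rank/distance inequalities, and, above all, pinning down that Lemma~\ref{lem:del eq} is precisely the device interchanging the \emph{strict} cyclic sequences of \assumpstar\ with the \emph{weakly} cyclic ones that the chain combinatorics of $\CR(P^\pm)$ naturally produces; the $u=1$ and $\pm\infty$ boundary cases then have to be dispatched carefully through condition~(1).
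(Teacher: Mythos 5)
Your proposal is correct, and for the hard (``if'') direction it runs on the same engine as the paper: pass to $\CR(P^\pm)$ via the correspondence $\Phi,\Psi$ of Lemma~\ref{lem:cr corr}, use Lemma~\ref{lem:pure corr} to match purity of stars with purity of the intervals $[y_{j-1},x_j]$ and to cancel the $+1$ shifts in the rank/distance inequalities, and use Lemma~\ref{lem:del eq} to upgrade the strict cyclic condition \assumpstar\ to the weakly cyclic sequences that chains in $\CR(P^\pm)$ actually produce. The difference is one of packaging. The paper does not set up a two-way transfer: for ``if'' it verifies the hypotheses of Lemma~\ref{lem:adjust sum} for $(\CR(P^\pm),\Phi(\nu))$ and builds $\eta,\zeta$ there directly, pulling them back by $\Psi$; for ``only if'' it bypasses $\CR(P^\pm)$ entirely and argues in a few lines from $N\nu=\eta+\zeta$ with $\eta\in\TTTTT^{(1)}$, $\zeta\in\TTTTT^{(-1)}$, using $\eta(a_i)-\eta(b_i)\geq\rank([a_i,b_i])$ and $\zeta(a_{i+1})-\zeta(b_i)\geq-\dist(a_{i+1},b_i)$. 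You instead prove the equivalence $T^\nu\in\sqrt{\trace(\omega_{\kop})}\iff T^{\Phi(\nu)}\in\sqrt{\trace(\omega_{E_\KKK[\msCCC(\CR(P^\pm))]})}$ --- your argument for it is sound, since every maximal chain of $\CR(P^\pm)$ has $\xi$-sum $d$, which forces the slack in the $\SSSSS^{(\pm1)}$ inequalities to vanish and puts $\eta',\zeta'$ in $\GGGGG$ --- and then translate conditions (1) and (2) of Theorem~\ref{thm:chain p trace} wholesale. What this buys is a genuine reduction of the order-polytope theorem to the chain-polytope one; what it costs is the extra combinatorics in the ``only if'' direction (the broken maximal chain for $u=1$ and the cover elements $(p_i,a_i)$, $(b_i,w_i)$ for $u\geq2$), which the paper's direct computation avoids. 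Two points to spell out in a full write-up: condition (1) only tells you that \emph{some} telescoping equality in $\xi^+(C)=d$ fails, so to extract $\nu(a_1)>\nu(b_1)$ you must, as you indicate, choose $C$ so that every other gap is a single cover and hence contributes nothing; and the boundary instance $a_1=-\infty$, $b_1=\infty$ corresponds to $C=\emptyset$, which the paper's conventions do admit as a chain.
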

\begin{proof}
We first prove the ``only if'' part.
Since $T^\nu\in\sqrt{\trace(\omega_{\kop})}$,
there exist a positive integer $N$, $\eta\in\TTTTT^{(1)}$
and $\zeta\in\TTTTT^{(-1)}$ with $\eta+\zeta=N\nu$.
Since $\eta\in\TTTTT^{(1)}$, we see that 
$\eta(a_i)-\eta(b_i)\geq\rank([a_i,b_i])$ for $1\leq i\leq u$.
On the other hand, since $\zeta\in\TTTTT^{(-1)}$, we see that
$\zeta(a_{i+1})-\zeta(b_i)\geq-\dist(a_{i+1},b_i)$
for $1\leq i\leq u-1$ and
$\zeta(a_1)-\zeta(b_u)\geq-\dist(a_1,b_u)$.
Therefore,
\begin{eqnarray*}
&&N\left(\sum_{i=1}^u(\nu(a_i)-\nu(b_i))\right)\\
&=&
\sum_{i=1}^u(\eta(a_i)+\zeta(a_i)-\eta(b_i)-\zeta(b_i))\\
&\geq&
\sum_{i=1}^u\rank([a_i,b_i])-\sum_{i=1}^{u-1}\dist(a_{i+1},b_i)-\dist(a_1,b_u)\\
&>&0.
\end{eqnarray*}
Since $N>0$, we see that
$\sum_{i=1}^u\nu(a_i)>\sum_{i=1}^u\nu(b_i)$.

Next we prove the ``if'' part.
We use symbols $\FFFFF$, $\GGGGG$, $\Phi$ and $\Psi$ of Lemma \ref{lem:cr corr}.
Let $\xi\define\Phi(\nu)$ and set $d\define\nu(-\infty)$.
Since $\nu\in\TTTTT^{(0)}$, we see that $\xi(\alpha)\geq 0$ for any
$\alpha\in\CR(P^\pm)$.
Thus, by setting $\xi(-\infty_{\CR(P^\pm)})=d$, we see that 
$\xi\in\SSSSS^{(0)}(\CR(P^\pm))$.

Let $C$ be a chain in $\CR(P^\pm)$ with $\xi^+(C)=d$.
We show that $\starcpx_{\CR(P^\pm)}(C)$ is pure.
Assume the contrary and set $C=\{(x_1,y_1), \ldots, (x_t,y_t)\}$,
$x_1$, \ldots, $x_t$, $y_1$, \ldots, $y_t\in P^\pm$,
$y_i\leq x_{i+1}$ in $P^\pm$ for $1\leq i\leq t-1$.
Then $\nu(x_1)=d$, $\nu(y_t)=0$ and $\nu(y_j)=\nu(x_{j+1})$ for $1\leq j\leq t-1$, since
\begin{eqnarray*}
d&=&\nu(-\infty)\\
&=&
\nu(-\infty)-\nu(x_1)+\sum_{j=1}^ t(\nu(x_j)-\nu(y_j))+\sum_{j=1}^ {t-1}(\nu(y_j)-\nu(x_{j+1}))
+\nu(y_t)\\
&=&(\nu(-\infty)-\nu(x_1))+\xi^+(C)+\sum_{j=1}^ {t-1}(\nu(y_j)-\nu(x_{j+1}))
+\nu(y_t)\\
\end{eqnarray*}
and
$\nu(y_j)\geq\nu(x_{j+1})$ for $1\leq j\leq t$,
$\nu(-\infty)\geq\nu(x_1)$,
$\nu(y_t)\geq 0$ and $\xi^+(C)=d$.

Since $\linkcpx_{\CR(P^\pm)}(C)$ is not pure, 
$\{\alpha\in\CR(P^\pm)\mid\alpha>(x_t,y_t)\}$
or
$\{\alpha\in\CR(P^\pm)\mid\alpha<(x_1,y_1)\}$
is not pure
or there is $i$ with $1\leq i\leq t-1$ such that
$[(x_i,y_i),(x_{i+1},y_{i+1})]_{\CR(P^\pm)}$ is not pure.
If $[(x_i,y_i),(x_{i+1},y_{i+1})]_{\CR(P^\pm)}$ is not pure,
we see by Lemma \ref{lem:pure corr} that 
$[y_i,x_{i+1}]_{P^\pm}$ is not pure.
Therefore, we see that $\rank([y_i,x_{i+1}])>\dist(y_i,x_{i+1})$
and by assumption that $\nu(y_i)>\nu(x_{i+1})$.
This contradicts to the fact shown above.
We can deduce a contradiction in other cases by the same way.

Next, let $C_1$, \ldots, $C_u$, $C'_1$, \ldots, $C'_u$ be chains
in $\CR(P^\pm)$ 
which satisfy \assumpastast\ 
of page \pageref{page:assumpastast}.
Set $\max C_i\defines(x_i,y_i)$ and $\min C'_i\defines (w_i,z_i)$
for $1\leq i\leq u$.
Since
$y_1\leq w_1\geq y_2\leq w_2\geq\cdots\geq y_u\leq w_u\geq y_1$
and 
$\xi^+(C_i)+\xi^+(C'_j)=d$
for any $i$ and $j$, we see that
$\sum_{i=1}^u\nu(y_i)=\sum_{i=1}^u\nu(w_i)$
by the same reason above.
Thus, we see that
$$
\sum_{i=1}^u\rank([y_i,w_i]_{P^\pm})
=\sum_{i=1}^{u-1}\dist_{P^\pm}(y_{i+1}, w_i)+\dist_{P^\pm}(y_1,w_u)
$$
by Lemma \ref{lem:del eq}
and therefore, by Lemma \ref{lem:pure corr} that
\begin{eqnarray*}
&&
\sum_{i=1}^u\rank([\max C_i,\min C'_i]_{\CR(P^\pm)})\\
&=&
\sum_{i=1}^{u-1}\dist_{\CR(P^\pm)}(\max C_{i+1},\min C'_i)
+\dist_{\CR(P^\pm)}(\max C_1,\min C'_u).
\end{eqnarray*}
Therefore, we see by Lemma \ref{lem:adjust sum} that there exists $\mu\in\ZZZ^{\CR(P^\pm)}$
such that $\mu(\alpha)=1$ if $\xi(\alpha)=0$ and
$\mu^+(C)=\mu^+(C')$ for any maximal chains $C$ and $C'$ in $\CR(P^\pm)$.
(Note that for any maximal chain $C$ in $\CR(P^\pm)$,
$\xi^+(C)=\nu(-\infty)=d$.)

Set $m\define\mu^+(C)$, where $C$ is a maximal chain in $\CR(P^\pm)$.
Let $N$ be a huge integer and set
\begin{eqnarray*}
&&
\eta(\alpha)=N\xi(\alpha)+\mu(\alpha),\\
&&
\zeta(\alpha)=N\xi(\alpha)-\mu(\alpha)
\end{eqnarray*}
for $\alpha\in\CR(P^\pm)$.
Then $\eta(\alpha)\geq 1$, $\zeta(\alpha)\geq -1$ for any $\alpha\in\CR(P^\pm)$.
Further, for any maximal chain $C$ in $\CR(P^\pm)$,
\begin{eqnarray*}
&&
\eta^+(C)=N\xi^+(C)+\mu^+(C)=Nd+m,\\
\mbox{and}\\
&&\zeta^+(C)=N\xi^+(C)-\mu^+(C)=Nd-m.
\end{eqnarray*}
Thus, $\eta$ and $\zeta$ are elements of $\GGGGG$ of Lemma \ref{lem:cr corr}.
Let $\nu'\define\Psi(\eta)$ and $\nu''\define\Psi(\zeta)$.
Then, since $\eta(\alpha)\geq 1$ (resp.\ $\zeta(\alpha)\geq -1$)
for any $\alpha\in\CR(P^\pm)$, we see that
$\nu'\in\TTTTT^{(1)}$ (resp.\ $\nu''\in\TTTTT^{(-1)}$).
Further, since $\eta+\zeta=2N\xi$,
we see that
$
\nu'+\nu''=2N\nu$.
This means that
$$
(T^\nu)^{2N}\in\trace(\omega_{\kop}).
$$
\end{proof}


\section{Dimesions of non-\gor\ loci}

\mylabel{sec:dim}

In this section, we study the dimensions of  non-\gor\ loci
of the Ehrhart rings of order and chain polytopes.
We first consider the order polytopes.

For $a_1$, \ldots, $a_u$, $b_1$, \ldots, $b_u\in P^\pm$
satisfying \assumpstar\ of Theorem \ref{thm:order p trace}, set
$$
\TTTTT^{(0)}_{(a_1, \ldots, a_u, b_1, \ldots, b_u)}\define\{
\nu\in\TTTTT^{(0)}\mid \sum_{i=1}^u\nu(a_i)>\sum_{i=1}^u\nu(b_i)\}
$$
and
$$
\pppp'_{(a_1, \ldots, a_u,b_1,\ldots, b_u)}\define
\bigoplus_{\nu\in\TTTTT^{(0)}_{(a_1, \ldots, a_u,b_1, \ldots, b_u)}}\KKK T^\nu.
$$
Then $\pppp'_{(a_1, \ldots, a_u,b_1, \ldots, b_u)}$ is a prime ideal of $\kop$.
Moreover, we see by Theorem \ref{thm:order p trace} that
$$
\sqrt{\trace(\omega_{\kop})}=\bigcap_{(a_1, \ldots, a_u,b_1, \ldots, b_u)}
\pppp'_{(a_1, \ldots, a_u,b_1, \ldots, b_u)},
$$
where $(a_1, \ldots, a_u,b_1, \ldots, b_u)$ runs through sequence of elements
satisfying \assumpstar\ of Theorem \ref{thm:order p trace}.

For a sequence $(a_1, \ldots, a_u, b_1, \ldots, b_u)$
of elements in $P^\pm$ satisfying \assumpstar\ of Theorem \ref{thm:order p trace}, 
we set
$\mabu\define\{x\in P^\pm\mid$ there are $i$ and $j$ with $a_i\leq x\leq b_j\}$.
We first consider the case where $-\infty\not\in\{a_1,\ldots,a_u\}$
 and $\infty \not\in\{b_1,\ldots,b_u\}$.
In this case, 
for a sequence $(a_1, \ldots, a_u,b_1, \ldots, b_u)$ of
elements satisfying \assumpstar\ of Theorem \ref{thm:order p trace},
$\kop/\pppp'_{(a_1, \ldots, a_u,b_1, \ldots, b_u)}$ is isomorphic to the 
Ehrhart ring of the face 
$\gabu\define\{f\in\msOOO(P)\mid f(a_i)=f(b_j)$ for any $i$ and $j\}$ of 
$\msOOO(P)$.
Note that 
for $\nu\in\TTTTT^{(0)}$,
$\nu(a_i)\geq\nu(b_i)$ for any $1\leq i\leq u$,
$\nu(a_{i+1})\geq \nu(b_i)$ for any $1\leq i\leq u-1$ and $\nu(a_u)\geq \nu(b_1)$.
In particular, we see 
that if $\nu\in\TTTTT^{(0)}\setminus
\TTTTT^{(0)}_{(a_1, \ldots, a_u, b_1, \ldots, b_u)}$, then
$\nu(a_i)=\nu(b_j)$ for any $i$ and $j$,
since $\sum_{i=1}^u\nu(a_i)=
\sum_{i=1}^u\nu(b_i)$,
and therefore 
$\nu(x)=\nu(a_1)$ for any $x\in\mabu$.

Fix a sequence $a_1$, \ldots, $a_u$, $b_1$, \ldots, $b_u$ which satisfies
\assumpstar\ of Theorem \ref{thm:order p trace}.
Let $\ast$ be a new element not contained in $P$ and set
$\overline P\define(P\setminus\mabu)\cup\{\ast\}$.
We define the relation $<$ on $\overline P$ as follows.
For $\alpha$, $\beta\in\overline P$, $\alpha<\beta$ if and only if
at least one of the following 4 conditions is satisfied.
\begin{enumerate}
\item
$\alpha$, $\beta\in P$ and $\alpha<\beta$ in $P$.
\item
$\alpha\in P$, $\beta=\ast$ and there is 
$x\in\mabu$ with $\alpha <x$ in $P$.
\item
$\alpha=\ast$, $\beta\in P$ and there is 
$x\in\mabu$ with $x<\beta$ in $P$.
\item
\mylabel{item:beyond m}
$\alpha$, $\beta\in P$ and there are 
$x$, $y\in\mabu$ with
$\alpha<x$ and $y<\beta$
in $P$.
\end{enumerate}
Note that
$\overline P$ is not the quotient poset of $P$ (cf. \cite{sta4}).
However, we have the following.

\begin{lemma}
\mylabel{lem:bar p poset}
By the relation $<$ above, $\overline P$ is a poset.
\end{lemma}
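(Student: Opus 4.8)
The plan is to verify the three poset axioms for $\overline P$: reflexivity (or rather, that $<$ is irreflexive/a strict order), transitivity, and antisymmetry. Reflexivity in the sense that $\alpha \not< \alpha$ needs a small argument only for the new element $\ast$ and for condition \ref{item:beyond m}; the main work is transitivity.

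First I would observe that for elements $\alpha, \beta \in P \setminus \mabu$, the relation $\alpha < \beta$ in $\overline P$ holds if and only if either $\alpha < \beta$ in $P$, or there exist $x, y \in \mabu$ with $\alpha < x$ and $y < \beta$ in $P$; and note that in the latter case we automatically get $x \leq y$ is \emph{not} required — rather, what makes this coherent is that $\mabu$ has the key property (to be extracted from \assumpstar) that any two elements of $\mabu$ lie in a common ``interval structure'': more precisely, since the sequence satisfies $a_1 < b_1 > a_2 < \cdots$, for any $x \in [a_i, b_j]$ and $y \in [a_k, b_\ell]$ there is a chain of comparabilities within $\mabu$ linking the ``top'' of one to the ``bottom'' of another. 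The cleanest formulation: if $x, y \in \mabu$ then there exist $i, j$ with $x \leq b_i$ and $a_j \leq y$, and since $b_i > a_{i+1} \leq \cdots$ connects around the cycle, one has $a_j \leq b_i$ is attainable through the cyclic structure — so effectively collapsing $\mabu$ to $\ast$ is consistent. I would isolate this as a preliminary claim: \emph{for all $x, y \in \mabu$ there is $z \in \mabu$ with $x \geq z$ is false in general, but } the correct statement is that the ``relation generated on $\mabu$'' makes $\mabu$ into a connected piece, which is exactly why conditions (2),(3),(4) patch together transitively.

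The core of the proof is then a case analysis for transitivity: given $\alpha < \beta$ and $\beta < \gamma$ in $\overline P$, show $\alpha < \gamma$. Split on whether each of $\alpha, \beta, \gamma$ equals $\ast$ or lies in $P \setminus \mabu$. The cases where $\beta = \ast$ use that $\alpha < \ast$ gives $x \in \mabu$ with $\alpha < x$, and $\ast < \gamma$ gives $y \in \mabu$ with $y < \gamma$, so condition \ref{item:beyond m} applies directly to $\alpha < \gamma$ (if $\gamma \in P$) or condition (2) applies (if $\gamma = \ast$, using $\alpha < x$). The cases where $\beta \in P \setminus \mabu$ are where \assumpstar\ actually enters: e.g. if $\alpha < \beta$ via condition \ref{item:beyond m} (so $\alpha < x$, $y < \beta$ for some $x,y \in \mabu$) and $\beta < \gamma$ in $P$, then $y < \beta < \gamma$ gives $y < \gamma$ in $P$, and combined with $\alpha < x$ we get condition \ref{item:beyond m} for $\alpha < \gamma$ — but one must check $\gamma \notin \mabu$; if $\gamma$ were in $\mabu$ we'd need $\alpha < \ast$, which condition (2) gives from $\alpha < x$. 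I expect every subcase to close by producing the required witnesses, but the bookkeeping is extensive.

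Antisymmetry is the step I expect to be the main obstacle, because it is where the combinatorial hypothesis \assumpstar\ is genuinely needed rather than just convenient. I would argue by contradiction: suppose $\alpha < \beta$ and $\beta < \alpha$ in $\overline P$ with $\alpha \neq \beta$. If both are in $P \setminus \mabu$ and both relations come from $<$ in $P$, that contradicts antisymmetry in $P$. If one or both relations use a witness in $\mabu$, we obtain a cycle $\alpha < x$, $y < \alpha$ (or similar) with $x, y \in \mabu$, which forces $\alpha$ itself into a position ``between'' elements of $\mabu$; the point is that $\mabu = \{z \in P^\pm : a_i \leq z \leq b_j \text{ for some } i,j\}$ is ``convex enough'' that $y < \alpha < x$ with $x, y \in \mabu$ would put $\alpha \in \mabu$, contradicting $\alpha \in \overline P \setminus \{\ast\} = P \setminus \mabu$ — and this convexity is precisely what must be derived from the cyclic inequality structure $a_1 < b_1 > a_2 < \cdots > a_u < b_u > a_1$ together with $a_i \not\leq a_j$, $b_i \not\leq b_j$. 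So the plan is: (i) prove the convexity/connectedness lemma for $\mabu$ inside $P^\pm$ using \assumpstar; (ii) deduce reflexivity-as-irreflexivity; (iii) grind the transitivity case analysis; (iv) deduce antisymmetry from the convexity lemma. I would present (i) as the substantive step and treat (iii) as routine-but-lengthy verification.
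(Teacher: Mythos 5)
Your overall strategy is the same as the paper's: show irreflexivity (condition (4) applied to $\alpha<\alpha$ would force $\alpha\in\mabu$) and then grind through the transitivity cases, each closing either by producing witnesses in $\mabu$ or by showing that a middle element would be forced into $\mabu$. That is exactly how the paper argues, and your case analysis would go through.

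Two corrections, though, about where you locate the difficulty. First, the ``convexity'' fact you want --- if $y<\alpha<x$ with $x,y\in\mabu$ then $\alpha\in\mabu$ --- does \emph{not} require the cyclic inequality structure of \assumpstar\ at all: by definition $y\in\mabu$ gives some $a_i\leq y$ and $x\in\mabu$ gives some $x\leq b_j$, so $a_i\leq y<\alpha<x\leq b_j$ and $\alpha\in\mabu$ immediately. Your preliminary-claim paragraph, which tries to extract this from the chain $a_1<b_1>a_2<\cdots$, is chasing something that is already built into the definition of $\mabu$; the rank/distance inequality in \assumpstar\ plays no role in this lemma. Second, antisymmetry is not a separate obstacle: for a strict relation, irreflexivity plus transitivity already forbid $\alpha<\beta<\alpha$ (it would yield $\alpha<\alpha$), so step (iv) of your plan is redundant. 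With these simplifications your proof collapses to the paper's.
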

\begin{proof}
First, let $\alpha\in\overline P$.
We will show that $\alpha\not<\alpha$.
If $\alpha=\ast$, it is clear from the definition of $<$ that $\alpha\not<\alpha$.
Suppose that $\alpha\in P$.
Then the only way for $\alpha < \alpha$ is by condition \ref{item:beyond m} above.
Since the existence of 
$x$, $y\in\mabu$ with $\alpha <x$ and $y<\alpha$ imply
$\alpha\in\mabu$, we see that $\alpha\not<\alpha$.

Next we show that $<$ is transitive.
Let $\alpha$, $\beta$, $\gamma\in \overline P$ and suppose that
$\alpha<\beta$ and $\beta<\gamma$ in $\overline P$.

If $\alpha<\beta$ in $P$, then it is clear that $\alpha<\gamma$
in $\overline P$.

Next suppose that $\beta=\ast$.
Then $\alpha$, $\gamma\in P$ and
there are 
$x$, $y\in\mabu$
such that $\alpha<x$ and $y<\gamma$ in $P^\pm$.
Therefore, $\alpha<\gamma$ in $\overline P$.

Suppose that $\alpha=\ast$.
Then $\beta\in P$ and there is 
$x\in\mabu$ with $x<\beta$ 
in $P^\pm$.
If $\gamma=\ast$, then there is 
$y\in\mabu$ with $\beta<y$ in $P^\pm$,
and it follows that $\beta\in\mabu$.
This is a contradiction.
If $\gamma\in P$ and there are 
$z$, $w\in\mabu$ with $\beta<z$ and $w<\gamma$  in $P^\pm$,
then it follows that 
$\beta\in\mabu$
again.
Therefore, $\beta$, $\gamma\in P$ and $\beta<\gamma$ in $P$.
It follows that $x<\gamma$ in 
$P^\pm$ and therefore, $\alpha<\gamma$ in $\overline P$.

Finally suppose that $\alpha$, $\beta\in P$ and there are 
$x$, $y\in\mabu$ with $\alpha<x$ and $y<\beta$ in $P^\pm$.
Then there is no 
$z\in\mabu$ with $\beta<z$ in $P^\pm$
by the same reason as above.
Therefore, $\gamma\in P$ and $\beta<\gamma$ in $P$.
Since $\alpha<x$ and $y<\gamma$ in 
$P^\pm$, we see that $\alpha<\gamma$
in $\overline P$.
\end{proof}

Next, we state the following lemma whose proof is easy.

\begin{lemma}
\mylabel{lem:aff iso}
Let $\Theta\colon\RRR^{\overline P}\to\RRR^P$ be a map defined by
$$
\Theta(f)(x)=
\left\{
\begin{array}{ll}
f(x),&\quad x\in P\setminus\mabu,\\
f(\ast),&\quad x\in\mabu.
\end{array}
\right.
$$
Then $\Theta$ is an injective linear map
and $\Theta(\msOOO(\overline P))=\gabu$.
In particular, 
$$
\dim\gabu=\#\overline P=\#P-\#\mabu+1
$$
and
$$
\dim 
\kop/\pppp'_{(a_1, \ldots, a_u,b_1, \ldots, b_u)}
=\#P-\#M+2.
$$
\end{lemma}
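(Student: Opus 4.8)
The plan is to verify the three claims of the lemma in turn: that $\Theta$ is linear and injective, that $\Theta$ maps $\msOOO(\overline P)$ onto $\gabu$, and then to read off the two dimension formulas. Linearity is immediate from the defining formula, since each coordinate $\Theta(f)(x)$ is a single coordinate of $f$ (namely $f(x)$ if $x\in P\setminus\mabu$, or $f(\ast)$ if $x\in\mabu$). For injectivity, observe that $\mabu\neq\emptyset$: it contains $a_1$, because $a_1<b_1$ by \assumpstar\ of Theorem~\ref{thm:order p trace}, so $a_1\le a_1\le b_1$. Hence if $\Theta(f)=0$ then $f$ vanishes on $P\setminus\mabu$ and $f(\ast)=\Theta(f)(a_1)=0$, so $f=0$. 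Thus $\Theta$ is a linear isomorphism of $\RRR^{\overline P}$ onto a subspace of $\RRR^P$ and in particular preserves the dimension of any subset of $\RRR^{\overline P}$; note also that in the case under consideration ($-\infty\notin\{a_1,\ldots,a_u\}$, $\infty\notin\{b_1,\ldots,b_u\}$) one has $\mabu\subseteq P$, so $\#\overline P=\#P-\#\mabu+1$.

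The main step is to prove $\Theta(\msOOO(\overline P))=\gabu$, and for this I would first show that for $f\in\RRR^{\overline P}$ one has $\Theta(f)\in\gabu$ if and only if $f\in\msOOO(\overline P)$. Since $a_i,b_j\in\mabu$, all the values $\Theta(f)(a_i)$, $\Theta(f)(b_j)$ equal $f(\ast)$, so the equality constraints in the definition of $\gabu$ hold automatically, and the bounds $0\le\Theta(f)(x)\le1$ for $x\in P$ say exactly that $0\le f(\alpha)\le1$ for all $\alpha\in\overline P$. The remaining content is that $\Theta(f)$ is order-reversing on $P$ iff $f$ is order-reversing on $\overline P$, which I would establish by a short case analysis against the four clauses defining $<$ on $\overline P$. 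For one direction, given $x<y$ in $P$: if $x,y\notin\mabu$ this is clause (1); if $x\notin\mabu$, $y\in\mabu$ then $x<\ast$ in $\overline P$ by clause (2) (witnessed by $y$) and $\Theta(f)(x)=f(x)\ge f(\ast)=\Theta(f)(y)$; if $x\in\mabu$, $y\notin\mabu$ then $\ast<y$ by clause (3) (witnessed by $x$); if $x,y\in\mabu$ both sides equal $f(\ast)$. Conversely, a relation $\alpha<\beta$ in $\overline P$ arising from clause (1), (2) or (3) translates directly, while one arising from clause (4) --- say $\alpha<x$ and $y<\beta$ in $P$ with $x,y\in\mabu$ --- gives $\Theta(f)(\alpha)\ge\Theta(f)(x)=f(\ast)=\Theta(f)(y)\ge\Theta(f)(\beta)$, hence $f(\alpha)\ge f(\beta)$. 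This yields $\Theta(\msOOO(\overline P))\subseteq\gabu$. For the reverse inclusion, take $g\in\gabu$; for $x\in\mabu$ choose $i,j$ with $a_i\le x\le b_j$, so $g(a_i)\ge g(x)\ge g(b_j)=g(a_i)$ forces $g$ to be constant on $\mabu$; hence $g=\Theta(f)$ with $f$ agreeing with $g$ on $P\setminus\mabu$ and $f(\ast):=g(a_1)$, and $f\in\msOOO(\overline P)$ by the equivalence just proved, so $g\in\Theta(\msOOO(\overline P))$.

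Finally, the order polytope of any finite poset $Q$ is full-dimensional in $\RRR^Q$ (\cite{sta3}; an interior point is a strictly order-reversing function with values in $(0,1)$, obtained from a linear extension of $Q$), so $\dim\msOOO(\overline P)=\#\overline P=\#P-\#\mabu+1$, and since $\Theta$ preserves dimension, $\dim\gabu=\#P-\#\mabu+1$. As $\kop/\pppp'_{(a_1,\ldots,a_u,b_1,\ldots,b_u)}$ is the Ehrhart ring of $\gabu$ (recorded just before the lemma) and the Krull dimension of an Ehrhart ring is the polytope dimension plus one, we conclude $\dim\kop/\pppp'_{(a_1,\ldots,a_u,b_1,\ldots,b_u)}=\#P-\#\mabu+2$. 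I do not anticipate a genuine difficulty; the only point needing care is the bookkeeping in the case analysis, in particular checking that clause (4) of the order on $\overline P$ encodes exactly the inequalities forced among the coordinates of $\Theta(f)$ (and that the constancy argument for $g\in\gabu$ really does hit every point of $\mabu$) --- neither too few relations nor too many.
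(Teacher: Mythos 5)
Your proof is correct and is exactly the argument the paper has in mind: the paper omits the proof entirely (``whose proof is easy''), and your case analysis of the four clauses defining $<$ on $\overline P$, the constancy of $g\in\gabu$ on $\mabu$, and the full-dimensionality of $\msOOO(\overline P)$ supply precisely the missing details. The only external inputs you use (that $\overline P$ is a poset, that $\kop/\pppp'_{(a_1,\ldots,a_u,b_1,\ldots,b_u)}$ is the Ehrhart ring of $\gabu$, and that $\dim\ekp=\dim\msPPP+1$) are all established or recorded in the paper immediately beforehand.
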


Next we consider the case where $-\infty\in\{a_1,\ldots, a_u\}$
and $\infty\not\in\{b_1,\ldots,b_u\}$.
Since $a_i\not\leq a_j$ for $i\neq j$, it follows that $u=1$.
We set $a\define a_1$ and $b\define b_1$.
Then $\kop/\pppp'_{(a,b)}$ is isomorphic to the Ehrhart ring
of the face
$\gab\define\{f\in\msOOO(P)\mid f(x)=1$ for any $x\in(-\infty,b]_{P^\pm}\}$
of $\msOOO(P)$.
Then $\gab$ is isomorphic to the order complex $\msOOO(P\setminus\mab)$
of $P\setminus \mab$
(here we make a convention that the order complex of an empty set is a point).
Further, since $a=-\infty\in\mab\setminus P$, we see that
$$
\dim\msOOO(P\setminus\mab)=\#(P\setminus\mab)=\#P-\#\mab+1
$$
and
$$
\kop/\pppp'_{(a,b)}
=\#P-\#\mab+2,
$$
i.e.,
$$
\kop/\pppp'_{(a_1,\ldots, a_u,b_1, \ldots,b_u)}
=\#P-\#\mabu+2.
$$

Similarly, if $-\infty\not\in\{a_1, \ldots, a_u\}$ and
$\infty\in\{b_1, \ldots, b_u\}$, 
we see that
$$
\kop/\pppp'_{(a_1,\ldots, a_u,b_1, \ldots,b_u)}
=\#P-\#\mabu+2.
$$

Finally, if $-\infty\in\{a_1, \ldots, a_u\}$ and
$\infty\in\{b_1, \ldots, b_u\}$, then
$\pppp'_{(a_1,\ldots, a_u,b_1, \ldots,b_u)}=\mmmm_{\kop}$
and $\mabu=P^\pm$.
Therefore,
$$
\kop/\pppp'_{(a_1,\ldots, a_u,b_1, \ldots,b_u)}
=0=\#P-\#\mabu+2.
$$

Thus, we see the following.

\begin{thm}
\mylabel{thm:order p nongor}
$\{\pppp\in\spec(\kop)\mid
\kop_{\pppp}$ is not \gor$\}$
is a closed subset of $\spec(\kop)$ with dimension
$$
\max_{(a_1, \ldots, a_u,b_1,\ldots, b_u)}\#P-\#\mabu+2
$$
where $(a_1,\ldots,a_u,b_1,\ldots,b_u)$ runs through the
sequence of elements of $P^\pm$ which satisfy \assumpstar\ of 
Theorem \ref{thm:order p trace}.
\end{thm}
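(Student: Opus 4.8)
The statement will follow by combining Fact \ref{fact:nongor locus} with the description of $\sqrt{\trace(\omega_{\kop})}$ in Theorem \ref{thm:order p trace} and the dimension computations made above; the plan is as follows.

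First I would apply Fact \ref{fact:nongor locus}: since $\kop$ is a \cm\ $\NNN$-graded $\KKK$-algebra with $\KKK$ in degree $0$ and canonical module $\omega_{\kop}$, the non-\gor\ locus $\{\pppp\in\spec(\kop)\mid\kop_\pppp\text{ is not \gor}\}$ equals $V(\trace(\omega_{\kop}))=\{\pppp\mid\pppp\supseteq\trace(\omega_{\kop})\}$, which is closed. Its dimension is $\max\{\coht\pppp\mid\pppp\supseteq\trace(\omega_{\kop})\}$, and because $\coht$ is order-reversing along inclusions of primes this maximum is attained at a minimal prime over $\trace(\omega_{\kop})$, equivalently at a minimal prime of $\sqrt{\trace(\omega_{\kop})}$.

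Next I would use $\sqrt{\trace(\omega_{\kop})}=\bigcap_{(a_1,\ldots,a_u,b_1,\ldots,b_u)}\pppp'_{(a_1,\ldots,a_u,b_1,\ldots,b_u)}$ (from Theorem \ref{thm:order p trace} together with primeness of the $\pppp'$), the intersection running over all sequences of elements of $P^\pm$ satisfying \assumpstar\ of Theorem \ref{thm:order p trace}. Here I would observe that only finitely many distinct primes occur, since in such a sequence $\{a_1,\ldots,a_u\}$ and $\{b_1,\ldots,b_u\}$ are antichains of the finite poset $P^\pm$, so $u$ is bounded. Hence $V(\trace(\omega_{\kop}))$ is a finite union of the irreducible closed sets $V(\pppp'_{(a_1,\ldots,a_u,b_1,\ldots,b_u)})$, every minimal prime over $\trace(\omega_{\kop})$ is one of these $\pppp'$, and so $\dim V(\trace(\omega_{\kop}))=\max_{(a_1,\ldots,a_u,b_1,\ldots,b_u)}\dim\kop/\pppp'_{(a_1,\ldots,a_u,b_1,\ldots,b_u)}$.

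Finally I would substitute the value $\dim\kop/\pppp'_{(a_1,\ldots,a_u,b_1,\ldots,b_u)}=\#P-\#\mabu+2$ established in the discussion preceding the statement, which holds uniformly in the four cases distinguished by whether $-\infty$ and/or $\infty$ lies among $a_1,\ldots,a_u,b_1,\ldots,b_u$ (neither: Lemma \ref{lem:aff iso}; exactly one: identification of the relevant face of $\msOOO(P)$ with the order complex of $P\setminus\mabu$; both: $\pppp'_{(a_1,\ldots,a_u,b_1,\ldots,b_u)}=\mmmm_{\kop}$ and $\mabu=P^\pm$, so both sides are $0$). This yields the claimed formula. I do not expect a genuine obstacle here: all the substantive work — Theorem \ref{thm:order p trace} and the face/order-complex dimension formulas — is already done, so what remains is bookkeeping; the one point to be careful about is the finiteness of the index set, which is what lets the dimension pass through the union and the maximum be attained. (When $P$ is pure no sequence satisfies \assumpstar, and the formula degenerates consistently to the empty non-\gor\ locus, in agreement with Fact \ref{fact:order gor}.)
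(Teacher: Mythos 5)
Your proposal is correct and follows essentially the same route as the paper: Fact \ref{fact:nongor locus} identifies the non-\gor\ locus with the closed set $V(\trace(\omega_{\kop}))$, the decomposition $\sqrt{\trace(\omega_{\kop})}=\bigcap\pppp'_{(a_1,\ldots,a_u,b_1,\ldots,b_u)}$ from Theorem \ref{thm:order p trace} reduces the dimension to a maximum over the finitely many primes $\pppp'$, and the case-by-case computation $\dim\kop/\pppp'_{(a_1,\ldots,a_u,b_1,\ldots,b_u)}=\#P-\#\mabu+2$ is exactly the content of the discussion preceding the theorem (Lemma \ref{lem:aff iso} and the three cases involving $\pm\infty$). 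Your explicit remarks on the finiteness of the index set and on the attainment of the maximum at a minimal prime are details the paper leaves implicit, but they introduce no divergence in method.
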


\begin{cor}\label{cor:anysize}
Suppose that $\kop$ is not \gor, i.e., $P$ is not pure.
Then the dimension of the non-\gor\ locus of $\kop$ is at most $\dim\kop-4$.
In particular, for any $\pppp\in\spec(\kop)$ with $\height\pppp\leq 3$,
$\kop_\pppp$ is \gor.
Conversely,
let $m$ and $n$ be integers with $0 \leq m \leq n-4$.  
Then there exists a Hibi ring $\kop$ of dimension $n$ with a non-\gor\ locus of dimension $m$.
\end{cor}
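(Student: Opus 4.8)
The plan for the first statement is to apply Theorem~\ref{thm:order p nongor}: since $\kop$ is not \gor\ there is at least one sequence $(a_1,\ldots,a_u,b_1,\ldots,b_u)$ of elements of $P^\pm$ satisfying the condition $(\star)$ of Theorem~\ref{thm:order p trace}, and the dimension of the non-\gor\ locus equals $\max(\#P-\#\mabu+2)$ over all such sequences; so it suffices to show $\#\mabu\geq5$ for every such sequence. The elementary input I would record first is that any interval $[c,d]$ of a finite poset with $\#[c,d]\leq4$ is pure: a short case analysis on $\#[c,d]\in\{1,2,3,4\}$, using that $c=\min[c,d]$ and $d=\max[c,d]$, shows the only cases are a point, $c\covered d$, $c\covered z\covered d$, a $4$-chain, or a ``diamond'', all pure. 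Hence every non-pure interval has $\geq5$ elements. If $u=1$ then $\mabu=[a_1,b_1]_{P^\pm}$, and $(\star)$ forces $\rank([a_1,b_1])>\dist(a_1,b_1)$, i.e.\ $[a_1,b_1]_{P^\pm}$ is non-pure, so $\#\mabu\geq5$.

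If $u\geq2$, no entry of the sequence is $\pm\infty$ (e.g.\ $a_1=-\infty$ would give $a_1\leq a_2$, contradicting $a_1\not\leq a_2$), so all entries lie in $P$; moreover $\{a_1,\ldots,a_u\}$ and $\{b_1,\ldots,b_u\}$ are antichains, and they are disjoint since $a_i<b_i$, so they supply $2u\geq4$ distinct elements of $\mabu$. As each of the $u$ terms $\dist(a_{i+1},b_i),\dist(a_1,b_u)$ is $\geq1$, the inequality in $(\star)$ yields $\sum_i\rank([a_i,b_i])\geq u+1$, and since each summand is $\geq1$ some $\rank([a_{i_0},b_{i_0}])\geq2$; thus there is $z$ with $a_{i_0}<z<b_{i_0}$, which lies in $\mabu$ and, being strictly between $a_{i_0}$ and $b_{i_0}$, equals no $a_j$ or $b_j$. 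Hence $\#\mabu\geq2u+1\geq5$. This gives $\dim(\text{non-\gor\ locus})\leq\#P-3=\dim\kop-4$. For the last sentence, $\kop$ is a finitely generated domain over $\KKK$, so $\height\pppp+\coht\pppp=\dim\kop$ for every prime; if $\height\pppp\leq3$ then $\coht\pppp\geq\dim\kop-3>\dim\kop-4$, so by Fact~\ref{fact:nongor locus} the prime $\pppp$ is not in the non-\gor\ locus and $\kop_\pppp$ is \gor.

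For the converse, the plan is to exhibit for each $0\leq m\leq n-4$ an explicit poset $P$ with $\#P=n-1$ (hence $\dim\kop=n$), not pure, whose non-\gor\ locus has dimension $m$, the dimension being computed via Theorem~\ref{thm:order p nongor}. For $m=0$ (so $n\geq4$), let $P$ be the disjoint union of a one-point poset and a chain with $n-2$ elements. For $1\leq m\leq n-4$ (so $n\geq5$), let $P$ be the disjoint union of a chain $d_1<\cdots<d_{m-1}$ (with $m-1$ elements) and the $(n-m)$-element poset $K$ obtained from a chain $c_1<\cdots<c_{n-2-m}<b$ by adjoining one more element $a$ with $a\covered b$ (so $a$ is incomparable to every $c_i$); since $n-2-m\geq2$, the maximal chains $a<b$ and $c_1<\cdots<c_{n-2-m}<b$ have different lengths, so $P$ is not pure.

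The computation goes as follows. In each case one checks that the only intervals of $P^\pm$ which are not chains — hence the only candidates for non-purity — are $[-\infty,\infty]_{P^\pm}=P^\pm$, which is non-pure in all our cases, and, when $1\leq m\leq n-4$, also $[-\infty,b]_{P^\pm}=\{-\infty,a,c_1,\ldots,c_{n-2-m},b\}$, which is non-pure since $n-2-m\geq2$. As in the first part, any $(\star)$-sequence with $u\geq2$ has all entries in $P$, and the pattern $a_1<b_1>a_2<\cdots$ forces all $a_i,b_j$ into one connected component; the $d$-chain (and, for $m=0$, both components) is a chain with no $2$-element antichain, so the component must be $K$, but every $2$-element antichain of $K$ is $\{a,c_i\}$ for some $i$, and two disjoint such antichains cannot coexist, so no $u\geq2$ sequence exists. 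Hence the only $(\star)$-sequences are $(-\infty,\infty)$ and, in the second case, $(-\infty,b)$; evaluating $\#P-\#\mabu+2$ gives $0$ for $(-\infty,\infty)$ and $(n-1)-(n-m+1)+2=m$ for $(-\infty,b)$, so the non-\gor\ locus has dimension $m$, as desired. The step I expect to be the main obstacle is precisely this last verification — the complete enumeration of non-pure intervals of $P^\pm$ and of cyclic antichain configurations for the constructed posets — since a single overlooked non-pure interval or cyclic configuration would change the computed dimension.
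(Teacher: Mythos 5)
Your proposal is correct and follows essentially the same route as the paper: both halves rest on Theorem \ref{thm:order p nongor}, the upper bound coming from $\#\mabu\geq 5$ (which you verify in more detail than the paper, which merely asserts it), and the converse coming from an explicit non-pure poset with $\#P=n-1$ realizing dimension $m$. Your example (disjoint union of a chain and a chain-with-pendant) differs slightly from the paper's ordinal sum $(P_1+P_2)\oplus C$, but it plays the identical role, and your check that no other $(\star)$-sequence contributes a larger dimension is sound.
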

\begin{proof}
We first prove the first part.
Since $\dim\kop=\#P+1$ and $\#\mabu\geq5$ for any 
sequence $(a_1,\ldots,a_u,b_1,\ldots,b_u)$ 
of elements of $P^\pm$ which satisfy \assumpstar\ of 
Theorem \ref{thm:order p trace},
we see by Theorem \ref{thm:order p nongor} that the dimension of non-\gor\ locus
of $\kop$ is at most $\dim\kop-4$.

Conversely, let $m$ and $n$ be integers with $0\leq m\leq n-4$.
Let $P_1$ be a totally ordered set with $\#P_1=n-m-2$
and let $P_2$ be a single point with $P_1 \cap P_2 = \emptyset$.
Let $C$ be a totally ordered set with $\#C=m$.
Set $P=(P_1+P_2)\oplus C$ (cf. \cite[Section 3.2]{sta5}),
i.e., for $x$, $y\in P$, $x<y$ if and only if
(i) $x$, $y\in P_1$ and $x<y\in P_1$,
(ii) $x$, $y\in C$ and $x<y\in C$ or
(iii) $x\in P_1\cup P_2$ and $y\in C$.
Note that $P$ is not pure since $\#P_1=n-m-2 \geq 2$.  
Then $\#P=n-1$ and therefore
$\kop$ has dimension $n$ its non-\gor\ locus has dimension 
$m$, which comes from $(-\infty,\min C)$.
\begin{center}
\begin{tikzpicture}
\draw[dotted] (.5,2.3) -- (.5,3.5);
\draw (.5,2.3) -- (.5,2.7);
\draw[dotted] (0,0) -- (0,2);
\draw(0,0) -- (0,.5);
\draw(.5,2.3) -- (0,2);
\draw (.5,2.3) -- (1.5,.5);
\draw[fill] (.5,2.7) circle [radius=0.1];
\draw[fill] (.5,3.5) circle [radius=0.1];
\draw[fill] (0,.5) circle [radius=0.1];
\draw[fill] (.5,2.3) circle [radius=0.1];
\draw[fill] (0,0) circle [radius=0.1];
\draw[fill] (0,2) circle [radius=0.1];
\draw[fill] (1.5,.5) circle [radius=0.1];
\node [left] at (0,1) {$P_1$};
\node [right] at (1.5,.5) {$P_2$};
\node [right] at (.5,3) {$C$};
\end{tikzpicture}
\end{center}
\end{proof}

Next, we consider the chain polytopes.

For a chain $C$ with $\starcpx_P(C)$ is not pure, we set
$$
\SSSSS^{(0)}_C\define\{\xi\in\SSSSS^{(0)}\mid
\xi^+(C)<\xi(-\infty)\}
$$
and
$$
\pppp_C\define\bigoplus_{\xi\in\SSSSS^{(0)}_C}\KKK T^\xi.
$$
For chains $C_1$, \ldots, $C_u$, $C'_1$, \ldots, $C'_u$ in $P$
satisfying \assumpast\ of \ref{item:cycle} of Theorem \ref{thm:chain p trace},
we set
$$
\SSSSS^{(0)}_{(C_1, \ldots, C_u,C'_1, \ldots, C'_u)}
\define\{\xi\in\SSSSS^{(0)}\mid
\sum_{i=1}^u(\xi^+(C_i)+\xi^+(C'_i))<u\xi(-\infty)\}
$$
and
$$
\pppp_{(C_1,\ldots, C_u,C'_1,\ldots, C'_u)}\define
\bigoplus_{\xi\in\SSSSS^{(0)}_{(C_1,\ldots, C_u,C'_1,\ldots, C'_u)}}\KKK T^\xi.
$$
Then $\pppp_C$ and $\pppp_{(C_1, \ldots, C_u, C'_1, \ldots, C'_u)}$ are prime ideals of
$\kcp$.
Moreover, we see by Theorem \ref{thm:chain p trace} that
$$
\sqrt{\trace(\omega_{\kcp})}=
\bigcap_{C}\pppp_C\cap\bigcap_{(C_1,\ldots, C_u,C'_1,\ldots, C'_u)}
\pppp_{(C_1,\ldots, C_u,C'_1,\ldots, C'_u)},
$$
where $C$ runs through chains in $P$ with $\starcpx_P(C)$ is not pure
and $(C_1, \ldots, C_u,C'_1,\ldots, C'_u)$ runs through the sets of chains 
satisfying \assumpast\ of \ref{item:cycle} of Theorem \ref{thm:chain p trace}.
Therefore,
\begin{eqnarray*}
&&
\dim \kcp/\trace(\omega_{\kcp})\\
&=&
\max\{\max_{C}\coht\pppp_C,
\max_{(C_1,\ldots, C_u,C'_1,\ldots, C'_u)}\coht\pppp_{(C_1,\ldots, C_u, C'_1,\ldots, C'_u)}\}.
\end{eqnarray*}

Note that for a nonempty chain $C$ in $P$,
$\kcp/\pppp_C$ is isomorphic to the Ehrhart ring of the face
$\fc\define\{f\in\msCCC(P)\mid f^+(C)=1\}$ of $\msCCC(P)$ and
for set of chains $(C_1, \ldots, C_u, C'_1, \ldots, C'_u)$ 
satisfying \assumpast\ of \ref{item:cycle} of Theorem \ref{thm:chain p trace},
$\kcp/\pppp_{(C_1,\ldots, C_u,C'_1,\ldots, C'_u)}$
is isomorphic to the Ehrhart ring of the face
$\fcu\define\{f\in\msCCC(P)\mid \sum_{i=1}^u(f^+(C_i)+f^+(C'_i))=u\}$
of $\msCCC(P)$.
For such $(C_1$, \ldots, $C_u$, $C'_1$, \ldots, $C'_u)$, we set
$\lcu\define\{x\in P\mid$ there are $i$ and $j$ such that $\max C_i\leq x\leq\min C'_j\}$.

\begin{lemma}
\mylabel{lem:face dim}
\begin{enumerate}
\item
\mylabel{item:dim chain}
For any nonempty chain $C$ in $P$, it holds that
$$\dim\fc=\#P-\#\linkcpx_P(C)-1.$$
\item
\mylabel{item:dim cycle}
For set of chains $(C_1$, \ldots, $C_u$, $C'_1$, \ldots, $C'_u)$ 
satisfying \assumpast\ of \ref{item:cycle} of Theorem \ref{thm:chain p trace},
it holds that
$$
\dim(\fcu\cap\RRR^{\lcu})=1
$$
and therefore
$$\dim\fcu\leq\#P-\#\lcu+1.$$
\end{enumerate}
\end{lemma}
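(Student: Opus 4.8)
The plan is to realize $\fc$ and $\fcu$ as exposed faces of $\msCCC(P)$ and to compute their affine hulls explicitly. For \ref{item:dim chain}: if $f\in\fc$ and $x\in\linkcpx_P(C)$ then $C\cup\{x\}$ is a chain, so $1\geq f^+(C\cup\{x\})=1+f(x)$, forcing $f(x)=0$; hence $\fc$ is contained in the affine subspace
$$
A\define\{f\in\RRR^P\mid f^+(C)=1\ \text{and}\ f(x)=0\ \text{for all}\ x\in\linkcpx_P(C)\},
$$
and $\dim A=\#P-1-\#\linkcpx_P(C)$ since $C\cap\linkcpx_P(C)=\emptyset$. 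For the reverse inequality I would produce a point $g\in\fc$ lying in the relative interior of $\fc$ within $A$: put $g(x)=1/\#C$ on $C$, $g(x)=0$ on $\linkcpx_P(C)$, and $g(x)=\varepsilon$ on $P\setminus\starcpx_P(C)$ with $\varepsilon>0$ small. The key point is that a maximal chain $D$ of $P$ with $C\subseteq D$ necessarily satisfies $D\subseteq\starcpx_P(C)=C\cup\linkcpx_P(C)$, so $g^+(D)=1$, while $g^+(D)<1$ for every maximal chain $D\not\supseteq C$ once $\varepsilon$ is small; the same two estimates show $g+h\in\fc$ for all small $h$ with $g+h\in A$. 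Thus $\aff\fc=A$ and $\dim\fc=\#P-\#\linkcpx_P(C)-1$.

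For \ref{item:dim cycle}, write $a_i\define\max C_i$ and $b_i\define\min C'_i$. Since $a_i<b_i$, each $C_i\cup C'_i$ is a chain, and $f^+(C_i)+f^+(C'_i)=f^+(C_i\cup C'_i)\leq 1$ on $\msCCC(P)$, so $\fcu=\bigcap_{i=1}^u\msFFF_{C_i\cup C'_i}$. The central step is to show that $f^+(C_i)$ is independent of $i$ on $\fcu$: with $p_i=f^+(C_i)$, $q_i=f^+(C'_i)$ we have $p_i+q_i=1$ for all $i$, and since $C_{i+1}\cup C'_i$ and $C_1\cup C'_u$ are also chains ($a_{i+1}<b_i$, $a_1<b_u$) we get $p_{i+1}+q_i\leq 1$ and $p_1+q_u\leq 1$, whence $p_1\geq p_2\geq\cdots\geq p_u\geq p_1$ and $p_i\equiv p$, $q_i\equiv 1-p$. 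Therefore, whenever $a_i<b_j$, the chain $C_i\cup C'_j$ has $f^+(C_i\cup C'_j)=p+(1-p)=1$ for $f\in\fcu$, so $\fcu\subseteq\msFFF_{C_i\cup C'_j}$ and, by \ref{item:dim chain}, $f$ vanishes on $\linkcpx_P(C_i\cup C'_j)$. Any $x\in\lcu$ with $x\notin\{a_1,\dots,a_u,b_1,\dots,b_u\}$ satisfies $a_i<x<b_j$ for suitable $i,j$, hence lies in $\linkcpx_P(C_i\cup C'_j)$; so every $f\in\fcu$ vanishes off $\{a_1,\dots,a_u,b_1,\dots,b_u\}$ on $\lcu$. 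Here the hypotheses of \assumpast\ (the strict inequalities in the cycle together with $a_i\not\leq a_j$, $b_i\not\leq b_j$) are used to verify that the $2u$ elements $a_i,b_j$ are pairwise distinct, that $b_j\not\leq a_k$ for all $j,k$, and that $C_i\cap\lcu=\{a_i\}$, $C'_i\cap\lcu=\{b_i\}$.

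From this the two conclusions follow. For $f\in\fcu\cap\RRR^{\lcu}$ the coordinates outside $\lcu$ are zero, so $f(a_i)=f^+(C_i)=p$, $f(b_i)=f^+(C'_i)=1-p$ and $f$ is zero elsewhere on $\lcu$; conversely the resulting $f_p$ lies in $\msCCC(P)$ for $0\leq p\leq 1$ (a straightforward check using that $\{a_1,\dots,a_u\}$ and $\{b_1,\dots,b_u\}$ are antichains and $b_j\not\leq a_i$) and satisfies $f_p^+(C_i\cup C'_i)=1$. Hence $\fcu\cap\RRR^{\lcu}=\{f_p\mid 0\leq p\leq 1\}$ and $\dim(\fcu\cap\RRR^{\lcu})=1$. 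The same analysis shows $\aff\fcu$ satisfies the $\#\lcu-1$ linear equations: $f(x)=0$ for the $\#\lcu-2u$ elements $x\in\lcu\setminus\{a_1,\dots,b_u\}$, $f^+(C_i\cup C'_i)=1$ for $1\leq i\leq u$, and $f^+(C_1)=f^+(C_i)$ for $2\leq i\leq u$. These are independent: the coefficient of the coordinate $f(b_j)$ singles out $f^+(C_j\cup C'_j)=1$ (since $b_j\in C_k\cup C'_k$ only for $k=j$ and $b_j\notin C_k$ for any $k$), and after discarding those the coefficient of $f(a_j)$, $j\geq 2$, singles out $f^+(C_1)=f^+(C_j)$. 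Therefore $\dim\fcu\leq\#P-(\#\lcu-1)=\#P-\#\lcu+1$.

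The main obstacle is the central step of \ref{item:dim cycle}: recognizing that all the faces in play are of the form $\msFFF_{C_i\cup C'_j}$ and proving that $f^+(C_i)$ is constant on $\fcu$ via the cyclic chain of inequalities. This is exactly what reduces the cyclic configuration to \ref{item:dim chain} and forces the vanishing of all $\lcu$-coordinates of points of $\fcu$ away from $\{a_1,\dots,a_u,b_1,\dots,b_u\}$. The surrounding bookkeeping (pairwise distinctness of the $a_i,b_j$, the computation of $C_i\cap\lcu$ and $C'_i\cap\lcu$, and the linear independence of the $\#\lcu-1$ equations) is routine, and it is the incomparability part of \assumpast, rather than the rank inequality, that makes it work.
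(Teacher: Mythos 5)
Your proof is correct and follows essentially the same route as the paper: both parts hinge on the same key observations (vanishing of $f$ on $\linkcpx_P(C)$ in \ref{item:dim chain}, and in \ref{item:dim cycle} the equalities $f^+(C_i)+f^+(C'_j)=1$ forcing every $f\in\fcu$ to vanish on $\lcu$ away from the elements $a_i$, $b_j$, so that $\fcu\cap\RRR^{\lcu}$ is the segment joining $\chi_A$ and $\chi_B$). The only cosmetic differences are that you certify $\aff\fc$ by exhibiting a relative interior point rather than by producing spanning directions together with the projection onto $\RRR^C$, and that you obtain the final bound in \ref{item:dim cycle} by counting $\#\lcu-1$ independent affine equations, where the paper instead writes $\dim\aff\fcu$ as $\dim(\aff\fcu\cap\RRR^{\lcu})$ plus the dimension of the image under the projection $\RRR^P\to\RRR^{P\setminus\lcu}$.
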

\begin{proof}
\ref{item:dim chain}
First note that for any $f\in\fc$ and $x\in\linkcpx_P(C)$,
it holds that $f(x)=0$.
Next let $x$ be an arbitrary element of $P\setminus\starcpx_P(C)$.
Take $y\in C$ with $y\not\sim x$.
Then it is easily verified that
$$
f_1(z)=
\left\{
\begin{array}{ll}
1,&\quad z=x\mbox{ or }y,\\
0,&\quad\mbox{otherwise,}
\end{array}
\right.
$$
and
$$
f_2(z)=
\left\{
\begin{array}{ll}
1,&\quad z=y,\\
0,&\quad\mbox{otherwise,}
\end{array}
\right.
$$
are elements of $\fc$.
Therefore, the vector subspace of $\RRR^P$ parallel to $\aff\fc$ 
contains
$\chi_{\{x\}}=f_1-f_2$.

On the other hand, the image of $\fc$ under the projection
$\RRR^P\to\RRR^C$, $f\mapsto f|_C$ is
$\{g\in\RRR^C\mid g(c)\geq 0$ for any $c\in C$ and $g^+(C)=1\}$,
which is a $\#C-1$ dimensional polytope.
Therefore,
$$
\dim\fc=\#P-\#\starcpx_P(C)+(\#C-1)=\#P-\#\linkcpx_P(C)-1.
$$

\ref{item:dim cycle}
Set $L\define \lcu$, $\msFFF\define\fcu$,
$a_i\define\max C_i$ and $b_i\define\min C'_i$ for $1\leq i\leq u$,
$A\define\{a_1$, \ldots, $a_u\}$, $B\define\{b_1$, \ldots, $b_u\}$
and $L'\define L\setminus (A\cup B)$.

First note that $f^+(C_i)+f^+(C'_j)=1$ for any $f\in\msFFF$, $i$ and $j$.
Therefore, for any $x\in L'$ and $f\in\msFFF$, $f(x)=0$.
Thus, 
$$
\aff\msFFF\cap\RRR^L=
\left\{f\in\RRR^L\left|\ \vcenter{\hsize=.5\textwidth\relax\noindent
$f(x)=0$ for any $x\in P\setminus(A\cup B)$ and there are $\alpha$
and $\beta$ with $\alpha+\beta=1$ and $f(a_i)=\alpha$, $f(b_i)=\beta$
for any $i$}\right.\right\}.
$$
It follows that 
$\dim(\aff\msFFF\cap\RRR^L)=1$.
Since $\aff\msFFF\cap\RRR^L\neq\emptyset$, the dimension of
$\aff\msFFF$ is the sum of dimensions of $\aff\msFFF\cap\RRR^L$
and the image of $\aff\msFFF$ by the projection $\RRR^P\to\RRR^{P\setminus L}$.
Thus, we see that
$$
\dim\msFFF\leq \#(P\setminus L)+1=\#P-\#L+1.
$$
\end{proof}

There is an example that the equality in the inequation
in \ref{item:dim cycle} of Lemma \ref{lem:face dim} does not hold.

\begin{example}
\rm
\mylabel{ex:not right dim}
Let
$$
P=
\vcenter{\hsize=.4\textwidth\relax
\begin{picture}(35,50)

\put(5,10){\circle*{3}}
\put(5,20){\circle*{3}}
\put(5,30){\circle*{3}}
\put(5,40){\circle*{3}}
\put(25,10){\circle*{3}}
\put(25,20){\circle*{3}}
\put(25,40){\circle*{3}}

\put(5,10){\line(0,1){30}}
\put(5,20){\line(1,1){20}}
\put(5,40){\line(1,-1){20}}
\put(25,10){\line(0,1){30}}
\put(5,20){\line(2,-1){20}}
\put(5,10){\line(2,1){20}}

\put(3,10){\makebox(0,0)[r]{$d_1$}}
\put(3,20){\makebox(0,0)[r]{$a_1$}}
\put(3,30){\makebox(0,0)[r]{$e_{\ }$}}
\put(3,40){\makebox(0,0)[r]{$b_1$}}
\put(27,10){\makebox(0,0)[l]{$d_2$}}
\put(27,20){\makebox(0,0)[l]{$a_2$}}
\put(27,40){\makebox(0,0)[l]{$b_2$}}

\end{picture}
}
$$
and set $C_1\define\{a_1, d_1\}$, 
$C_2\define\{a_2, d_2\}$, 
$C'_1\define\{b_1\}$ and 
$C'_2\define\{b_2\}$.
Then
$$
\msFFF_{(C_1,C_2,C'_1,C'_2)}=
\left\{f\in\RRR^P\left|\ \vcenter{\hsize=.5\textwidth\relax\noindent
$f(e)=0$, there are $\alpha$, $\beta$ and $\gamma$ with
$\alpha\geq0$, $\beta\geq 0$, $\gamma\geq 0$ , $\alpha+\beta+\gamma=1$,
$f(a_i)=\alpha$, $f(b_i)=\beta$ and $f(d_i)=\gamma$ for $i=1$, $2$}
\right.\right\}.
$$
In fact, the right hand side is clearly contained in $\msFFF_{(C_1,C_2,C'_1,C'_2)}$.
Suppose that $f\in\msFFF_{(C_1,C_2,C'_1,C'_2)}$.
Then
\begin{eqnarray}
&&f(d_1)+f(a_1)+f(b_1)=1
\nonumber\\
&\mbox{and}&
\mylabel{eq:=1}\\
&&f(d_2)+f(a_2)+f(b_2)=1.
\nonumber
\end{eqnarray}
Further,
\begin{eqnarray*}
&&f(e)\geq0\\
&&f(d_1)+f(a_1)+f(e)+f(b_1)\leq 1\\
&&f(d_1)+f(a_1)+f(b_2)\leq 1\\
&&f(d_2)+f(a_2)+f(b_1)\leq 1\\
&&f(d_2)+f(a_1)+f(b_1)\leq 1\\
&&f(d_1)+f(a_2)+f(b_2)\leq 1.
\end{eqnarray*}
Therefore, 
by combining the first and the second inequlities and equation \refeq{eq:=1},
we see that 
equalities hold for the first 2 inequations.
We also see that equalities hold for the third and the fourth inequalities
by combining them and equation \refeq{eq:=1}
and
that equalities hold for the fifth and the sixth inequalities
by combining them and equation \refeq{eq:=1}.
Thus, $f(e)=0$, $f(a_1)=f(a_2)$,
$f(b_1)=f(b_2)$ and
$f(d_1)=f(d_2)$
and we see that $f$ is contained in the right hand side.

Therefore, $\dim\msFFF_{(C_1,C_2,C'_1,C'_2)}=2$,
while $\#P-\#L+1=7-5+1=3$.
\end{example}

Although the equality does not hold in general in \ref{item:dim cycle} of 
Lemma \ref{lem:face dim}, we can replace chains to appropriate ones
so that equality in \ref{item:dim cycle} of Lemma \ref{lem:face dim} holds
and $\max C_i$ (resp.\ $\min C'_i$) are the same as the original ones.
For example, if we replace $C_2$ to $\{a_2, d_1\}$ in the above example,
then $\dim\msFFF_{(C_1,C_2,C'_1,C'_2)}=3$.

In order to prove this fact in general, we state the following.

\begin{lemma}
\mylabel{lem:chains const}
Let 
$a_1$, \ldots, $a_u$, $b_1$, \ldots, $b_u$ be elements of $P$
satisfying \assumpstar\ of Theorem \ref{thm:order p trace} and
$u\geq 2$.
Then there is a set $(C_1, \ldots, C_u,C'_1, \ldots, C'_u)$
of chains in $P$ with
$\max C_i=a_i$ (resp.\ $\min C'_i=b_i$) for $1\leq i\leq u$,
satisfying \assumpast\ of \ref{item:cycle} of Theorem \ref{thm:chain p trace}
and
$$
\dim\fcu=\#P-\#\lcu+1.
$$
\end{lemma}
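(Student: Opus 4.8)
The plan is to construct the chains $C_1,\dots,C_u,C_1',\dots,C_u'$ explicitly and then pin down $\dim\fcu$ by matching the upper bound of Lemma~\ref{lem:face dim}\,\ref{item:dim cycle} with a lower bound obtained from an affine independence count. First I would dispose of the routine points. For \emph{any} choice of maximal chains $C_i$ in $(-\infty,a_i]$ and $C_i'$ in $[b_i,\infty)$ the data $(C_1,\dots,C_u,C_1',\dots,C_u')$ satisfies \assumpast\ of \ref{item:cycle} of Theorem~\ref{thm:chain p trace}: since $\max C_i=a_i$ and $\min C_i'=b_i$, the cycle and the antichain conditions are precisely those in \assumpstar, the maximality requirement holds by construction, and the rank--distance inequality is the one in \assumpstar\ (for $a_i,b_j\in P$ the intervals $[a_i,b_j]$ and the saturated chains in them are the same in $P$ and in $P^\pm$). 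Hence Lemma~\ref{lem:face dim}\,\ref{item:dim cycle} already gives $\dim\fcu\le\#P-\#\lcu+1$ for every admissible choice, and it remains to choose the chains so that equality holds. Write $L=\lcu$, $A=\{a_1,\dots,a_u\}$, $B=\{b_1,\dots,b_u\}$. Using $a_i\not\le a_j$, $b_i\not\le b_j$ and (from $a_i<b_i$) $b_i\not\le a_j$ for all $i,j$, one checks $C_i\cap L=\{a_i\}$ and $C_i'\cap L=\{b_i\}$, so the chains meet $L$ only in $A\cup B$; and the proof of Lemma~\ref{lem:face dim}\,\ref{item:dim cycle} already shows that every $f\in\fcu$ vanishes on $L\setminus(A\cup B)$, that all $f^+(C_i)$ share a common value $\alpha$ and all $f^+(C_i')$ a common value $\beta=1-\alpha$, and consequently that $f^+(D)=1$ for every maximal chain $D$ of $P$ with $(-\infty,a_i]\cap D=C_i$ and $[b_j,\infty)\cap D=C_j'$ for some $i,j$.

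The heart of the matter is choosing the chains. Outside $L$ they live in the sets $(-\infty,a_i)$ and $(b_i,\infty)$, and Example~\ref{ex:not right dim} shows that a careless choice there can impose a spurious linear relation among the coordinates in $P\setminus L$ (there, $f(d_1)=f(d_2)$) and so lower the dimension. I would therefore build the lower tails $C_i\cap(-\infty,a_i)$ and the upper tails $C_i'\cap(b_i,\infty)$ \emph{so as to overlap as much as the order of $P$ permits}: processing $i=1,\dots,u$, extend $C_i$ downward from $a_i$ reusing elements already put on $C_1,\dots,C_{i-1}$ whenever $P$ allows it, and dually for the $C_i'$; this is exactly the passage from $\{a_2,d_2\}$ to $\{a_2,d_1\}$ in Example~\ref{ex:not right dim}. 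The point of overlapping is twofold: an element of $P\setminus L$ that ends up on \emph{no} chain is completely free in $\fcu$, and when several $C_i$ share the same lower tail the forced equalities $f^+(C_i)=\alpha$ collapse to equalities $f(a_i)=f(a_j)$ that are already accounted for rather than to new relations, while ``crossed'' maximal chains such as $\{d_1,a_2,b_2\}$ become one of the chains $D$ above and hence impose nothing new.

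With the chains chosen in this way I would verify that the only linear relations valid on $\fcu$ are $f|_{L\setminus(A\cup B)}=0$ together with $f^+(C_1)=\dots=f^+(C_u)$, $f^+(C_1')=\dots=f^+(C_u')$ and $f^+(C_1)+f^+(C_1')=1$, i.e.\ exactly $\#(L\setminus(A\cup B))+(2u-1)=\#L-1$ independent relations. Concretely I would fix a relative-interior point $f_0$ of $\fcu$ and, for each $x\in P\setminus L$, produce $f^{(x)}\in\fcu$ with $f^{(x)}(x)\ne f_0(x)$ but $f^{(x)}=f_0$ on $(P\setminus L)\setminus\{x\}$, together with one more point of $\fcu$ obtained by changing $\alpha$; this yields $\#(P\setminus L)+2=\#P-\#L+2$ affinely independent points, hence $\dim\fcu\ge\#P-\#L+1$, which with the upper bound finishes the proof. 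The main obstacle is this verification, i.e.\ proving that the overlapping construction genuinely removes \emph{all} hidden relations --- one must rule out, for instance, a maximal chain of $P$ lying entirely outside $L$ forcing two tail elements below distinct $a_i$'s to coincide. Here the full strength of \assumpstar\ enters: the antichain conditions prevent the down-sets of the $a_i$ from nesting, and the strict rank--distance inequality controls the interaction of the saturated chains between the $a_i$ and the $b_j$. The natural route is an induction on $u$ (splitting the cycle at a nesting as in the proof of Lemma~\ref{lem:del eq}) combined with repeated use of the cross-chain exchange of Lemma~\ref{lem:cross chain} to normalise the configuration.
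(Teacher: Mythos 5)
Your overall strategy agrees with the paper's: choose the lower tails of the $C_i$ (and dually the upper tails of the $C'_i$) greedily so that they overlap as much as possible, note that any such choice satisfies \assumpast\ and gives $\lcu=\mabu$, and then beat the upper bound of Lemma \ref{lem:face dim}~\ref{item:dim cycle} from below. The paper's construction is exactly your ``reuse elements already placed'' idea, implemented by listing $D_1=\{x\mid x<a_i\text{ for some }i\}$ in decreasing order of height and always descending to the earliest available element. Up to this point your proposal is sound.

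The gap is in the lower bound, and the specific scheme you propose would fail. You want, for each $x\in P\setminus L$, a point $f^{(x)}\in\fcu$ with $f^{(x)}(x)\neq f_0(x)$ and $f^{(x)}=f_0$ on $(P\setminus L)\setminus\{x\}$. Such a point generally does not exist: if $x$ is a shared tail element lying on $C_1$ and $C_2$ but not on $C_3$ (e.g.\ $d_2\in C_2\cap C_3$, $d_2\notin C_1$ in the paper's worked example), then every $f\in\fcu$ satisfies $f^+(C_1)=f^+(C_3)$, and since the only freedom inside $L$ is the single parameter $\alpha=f(a_i)$ (common to all $i$), perturbing $f$ at $x$ alone changes $f^+(C_1)-f^+(C_3)$ and leaves $\fcu$. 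So the perturbations cannot be supported on single coordinates; they must be \emph{triangular}. This is precisely what the paper supplies and what is missing from your plan: a family of antichains $A_0=A,A_1,\ldots,A_w$ obtained by successively swapping the tail elements $d_{j(t)}$ into the antichain and deleting everything above them, with a six-part inductive claim guaranteeing that each $A_t$ is an antichain meeting every $C_i$; the points $f_t=\tfrac12\chi^P_{B\cup A_t}$ then lie in $\fcu$ and the difference $f_t-f_{t-1}$ equals $\tfrac12\chi_{\{d_{j(t)}\}}$ only \emph{after} projecting away $D'_{1,t-1}\cup M$, which still suffices to span. Separate antichains $A'(x)$ are needed for elements of $D_1$ on none of the chains, and a further argument for elements incomparable to the whole configuration. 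None of this is recovered by your suggested fallback (induction on $u$ plus Lemma \ref{lem:cross chain}), which addresses the combinatorics of the cycle rather than the affine geometry of the face; so the heart of the proof is absent.
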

\begin{proof}
Set $M\define\mabu$,
$A\define\{a_1, \ldots, a_u\}$, $B\define\{b_1, \ldots, b_u\}$,
$D_1\define\{x\in P\mid x<a_i$ for some $i\}$,
$D_2\define\{x\in P\mid x>b_i$ for some $i\}$,
$D_3\define P\setminus(M\cup D_1\cup D_2)$
and
$t_i\define\height a_i$ (resp.\ $t'_i\define\coht b_i$) for $1\leq i\leq u$.
Set also $D_1=\{d_1, d_2, \ldots, d_v\}$
so that
$\height d_1\geq\height d_2\geq\cdots\geq \height d_v$
(resp.\ $D_2=\{d'_1, d'_2, \ldots, d'_{v'}\}$
so that
$\coht d'_1\geq\coht d'_2\geq\cdots\geq \coht d'_{v'}$).
We define $C_i$ inductively for each $i$.
Set $c_{i0}\define a_i$.
If $c_{i0}$, $c_{i1}$, \ldots, $c_{is}$, $s<t_i$, are defined so that
$c_{i0}>c_{i1}>\cdots>c_{is}$ and
$\height c_{ij}=t_i-j$ for $0\leq j\leq s$,
we set
$\ell(i,s+1)\define\min\{\ell\mid d_\ell<c_{is}\}$
and
$c_{i,s+1}\define d_{\ell(i,s+1)}$.
Then we
\begin{claim}
$\height c_{i,s+1}=t_i-s-1$.
\end{claim}
%
In fact,
since $c_{i,s+1}<c_{is}$ and $\height c_{is}=t_i-s$, we see that
$\height c_{i,s+1}\leq t_i-s-1$.
Take an element $x\in P$ with $x<c_{is}$ and $\height x=t_i-s-1$.
Then $x\in D_1$.
Therefore, $x=d_{\ell'}$ for some $\ell'$.
Since $\ell(i,s+1)=\min\{\ell\mid d_\ell<c_{is}\}$,
we see that $\ell(i,s+1)\leq\ell'$.
Therefore, we see that
$\height c_{i,s+1}=\height d_{\ell(i,s+1)}\geq \height d_{\ell'}=\height x=t_i-s-1$.

\medskip

We define $C_i\define\{c_{i0}, c_{i1}, \ldots, c_{it_i}\}$ for $1\leq i\leq u$.
We define $C'_i$ for $1\leq i\leq u$ similarly.
Then it is clear by the definition that
$\lcu=M$.
Further, since
$a_1$, \ldots, $a_u$, $b_1$, \ldots, $b_u$
satisfy \assumpstar\ of Theorem \ref{thm:order p trace},
we see that
$(C_1, \ldots, C_u,C'_1, \ldots, C'_u)$
satisfy \assumpast\ of \ref{item:cycle} of Theorem \ref{thm:chain p trace}.

Set $D'_1\define\bigcup_{i=1}^u C_i\setminus A$ and
$D'_1=\{d_{j(1)}, d_{j(2)}, \ldots, d_{j(w)}\}$,
$j(1)<j(2)<\cdots<j(w)$.
We define subsets $A_0$, $A_1$, \ldots, $A_w$ of $P$ inductively.
We set $A_0\define A$.
If $A_0$, $A_1$, \ldots, $A_t$, $t<w$ are defined, we set
$A_{t+1}\define(A_t\cup\{d_{j(t+1)}\})\setminus\{x\in A_t\mid x>d_{j(t+1)}\}$.
We also set $D'_{1t}\define\{d_{j(1)}, d_{j(2)}, \ldots, d_{j(t)}\}$ for 
$1\leq t\leq w$ and $D'_{10}\define\emptyset$.
Then we have the following.

\begin{claim}
\mylabel{lem:ind lemma}
For $0\leq t\leq w$, it holds that
\begin{enumerate}
\item
\mylabel{item:subset chain}
$A_t\subset\bigcup_{i=1}^u C_i$.
\item
\mylabel{item:intersect}
$A_t\cap C_i\neq\emptyset$ for any $i$.
\item
\mylabel{item:antichain}
$A_t$ is an antichain.
\item
\mylabel{item:not<future}
$k>t$, $a\in A_t \Rightarrow a\not< d_{j(k)}$.
\item
\mylabel{item:not<prev}
$0\leq t'\leq t''\leq t$, $a'\in A_{t'}$, $a''\in A_{t''}\Rightarrow a'\not<a''$.
\item
\mylabel{item:subset}
$A_t\subset D'_{1t}\cup M$.
\end{enumerate}
\end{claim}
%
We prove this claim by induction on $t$.
The case where $t=0$ is obvious.

Suppose that $t>0$.
First we see \ref{item:subset chain} from the construction of $A_t$ and the induction 
hypothesis.
Next we prove \ref{item:intersect}.
Since $A_{t-1}\cap C_i\neq\emptyset$ by induction hypothesis, take $a'\in A_{t-1}\cap C_i$.
If $a'\not>d_{j(t)}$, then $a'\in A_t$ and therefore $A_t\cap C_i\neq\emptyset$.
Suppose that $a'>d_{j(t)}$.
Let $a'=c_{is}$ and $\ell(i,s+1)\define\min\{\ell\mid d_\ell<c_{is}\}$.
Since $d_{j(t)}<a'=c_{is}$, we see that $j(t)\geq \ell(i,s+1)$.
Since $d_{\ell(i,s+1)}=c_{i,s+1}\in D'_1$, there is $t'$ with $\ell(i,s+1)=j(t')$.
Since $j(t)\geq j(t')$, we see that $t\geq t'$.
Suppose that $t'\leq t-1$.
Then $d_{j(t')}\in A_{t'}$, $a'\in A_{t-1}$ and 
$d_{j(t')}=d_{\ell(i,s+1)}=c_{i,s+1}<c_{is}=a'$,
contradicting \ref{item:not<prev} of the induction hypothesis.
Therefore, $t'=t$ and 
$d_{j(t)}=c_{i,s+1}\in A_t\cap C_i$.

Next we prove \ref{item:antichain}.
Let $a$, $a'\in A_t$ and $a\neq a'$.
If $a$, $a'\in A_{t-1}$, then by the induction hypothesis, we see that
$a\not\sim a'$.
Suppose that $a'=d_{j(t)}$.
Then $a\in A_{t-1}$ and by the definition of $A_t$, we see that $a'=d_{j(t)}\not<a$.
Further, by \ref{item:not<future} of the induction hypothesis, we see that
$a\not<d_{j(t)}=a'$.

Next we prove \ref{item:not<future}.
Suppose that $a\in A_t$ and $k>t$.
If $a\in A_{t-1}$, then by the induction hypothesis, we see that $a\not<d_{j(k)}$.
Thus, we assume that $a=d_{j(t)}$.
Since $k>t$, we see that $j(k)>j(t)$.
Therefore, $\height d_{j(k)}\leq\height d_{j(t)}$.
Thus, $d_{j(k)}\not>d_{j(t)}=a$.

Next we prove \ref{item:not<prev}.
Suppose that $0\leq t'\leq t''\leq t$, $a'\in A_{t'}$ and $a''\in A_{t''}$.
If $t'=t$, then, since $A_t$ is an antichain, we see that $a'\not< a''$.
Therefore, we may assume that $t'\leq t-1$.
If $a''\in A_{t-1}$, then by the induction hypothesis, we see that $a'\not< a''$.
Thus we may assume that $t''=t$ and $a''=d_{j(t)}$.
Then by \ref{item:not<future} of the induction hypotehsis, we see that
$a'\not <d_{j(t)}=a''$, since $t'<t$.

Finally, we prove \ref{item:subset}.
By induction hypothesis, we see that 
$A_{t-1}\subset D'_{1,t-1}\cup M$.
Therefore,
$A_t\subset A_{t-1}\cup\{d_{j(t)}\}\subset D'_{1,t}\cup M$.

Thus, we have proved Claim \ref{lem:ind lemma}

\medskip

For $0\leq t\leq w$, we set 
$$
f_t\define\frac{1}{2}\chi^P_{(B\cup A_t)}.
$$
Then we

\begin{claim}
$f_t\in\fcu$ for $0\leq t\leq w$.
\end{claim}
%
In fact, let $C$ be an arbitrary maximal chain in $P$.
Then
$$
f_t^+(C)=\frac{1}{2}(\#(C\cap(B\cup A_t)))
=\frac{1}{2}(\#(C\cap B)+\#(C\cup A_t)).
$$
Since $B$ and $A_t$ are antichains, we see that $\#(C\cap B)\leq 1$
and $\#(C\cap A_t)\leq 1$.
Therefore, $f^+(C)\leq 1$.
Moreover, we see by \ref{item:intersect} and \ref{item:antichain} of 
Claim \ref{lem:ind lemma} that
$f_t^+(C_i)=\frac{1}{2}$ for any $i$.
Further, $f_t^+(C'_i)=\frac{1}{2}$ by the definition of $C'_i$.
Therefore, 
$f_t^+(C_i)+f_t^+(C'_i)=1$
for $1\leq i\leq u$.

\medskip

By \ref{item:subset} of Claim \ref{lem:ind lemma} and the fact that
$d_{j(t)}\in A_t$, we see that 
$A_t\setminus(D'_{1,t-1}\cup M)=\{d_{j(t)}\}$.
Therefore,
$$
(f_t-f_{t-1})|_{P\setminus(D'_{1,t-1}\cup M)}=
\frac{1}{2}\chi^{P\setminus(D'_{1,t-1}\cup M)}_{\{d_{j(t)}\}}
$$
for $1\leq t\leq w$.
Let $W$ be 
the vector subspace of $R^{P\setminus M}$ which is parallel to the
image of $\aff\fcu$ by the projection
$\RRR^P\to\RRR^{P\setminus M}$.
Then, we see by the above argument that $\chi^{P\setminus M}_{\{x\}}\in W$
for any $x\in D'_1$.

Next, let $x\in D_1\setminus D'_1$.
Set $A'(x)\define\{z\in P\mid z\in D'_1, \height z=\height x$ or
$z\in A, \height z\leq \height x$ or
$z=x\}$.
Then $A'(x)$ is an antichain, since $A$ is an antichain.
Also set $f=\frac{1}{2}\chi^P_{(B\cup A'(x))}$.
Then we can show the following.
\begin{claim}
$f\in\fcu$.
\end{claim}
%
In fact, for any maximal chain $C$ in $P$, $f^+(C)\leq 1$ since $B$ and
$A'(x)$ are antichains.
Further, for any $i$ with $1\leq i\leq u$, it is easily verified that
$C_i\cap A'(x)\neq\emptyset$.
Thus, $f^+(C_i)=\frac{1}{2}$  for $1\leq i\leq u$.
Further, we see that $f^+(C'_i)=\frac{1}{2}$ for $1\leq i\leq u$
by the definition of $C'_i$.

\medskip

Since the image of $f-f_0$ by the projection $\RRR^P\to\RRR^{P\setminus (M\cup D'_1)}$
is $\frac{1}{2}\chi^{P\setminus (M\cup D'_1)}_{\{x\}}$, we see that 
$\chi^{P\setminus M}_{\{x\}}\in W$ for any
$x\in D_1\setminus D'_1$.
Thus, we see that for any $x\in D_1$, $\chi^{P\setminus M}_{\{x\}}\in W$.
We see that $\chi^{P\setminus M}_{\{x\}}\in W$ for any $x\in D_2$ by the same way.

Finally, let $x\in D_3$.
Set $f=\frac{1}{2}\chi^P_{(A\cup B\cup\{x\})}$.
Let $C$ be an arbitrary maximal chain in $P$.
If $f^+(C)>1$, then $x\in C$ and there are $a\in A\cap C$ and $b\in B\cap C$.
Since $x\not\in D_1\cup D_2$, it follows that $a\leq x\leq b$ and therefore
$x\in M$.
This contradicts to the definition of $D_3$.
Thus $f^+(C)\leq 1$.
Further, $f^+(C_i)=f^+(C'_i)=\frac{1}{2}$ by the definition
of $C_i$ and $C'_i$ for any $i$,
since $x\not \in D_1\cup D_2$.
Therefore, $f\in\fcu$ and $\chi^{P\setminus M}_{\{x\}}$, the image of $2(f-f_0)$ by the
projection $\RRR^P\to\RRR^{P\setminus M}$,
is an element of $W$.
Thus, we see that $W=\RRR^{P\setminus M}$.

Since $\lcu=M$,
we see by \ref{item:dim cycle} of Lemma \ref{lem:face dim} that
$$
\dim\fcu=\#P-\#\lcu+1.
$$
\end{proof}

Let us observe the construction of Lemma \ref{lem:chains const} by an example.

\begin{example}
\rm
Let
$$
P=
\vcenter{\hsize=.4\textwidth\relax
\begin{picture}(70,70)

\put(5,10){\circle*{3}}
\put(5,20){\circle*{3}}
\put(5,30){\circle*{3}}
\put(5,40){\circle*{3}}
\put(5,50){\circle*{3}}
\put(5,60){\circle*{3}}

\put(25,10){\circle*{3}}
\put(25,20){\circle*{3}}
\put(25,30){\circle*{3}}
\put(25,40){\circle*{3}}
\put(25,60){\circle*{3}}

\put(45,30){\circle*{3}}
\put(45,40){\circle*{3}}
\put(45,60){\circle*{3}}

\put(65,30){\circle*{3}}
\put(65,40){\circle*{3}}
\put(65,60){\circle*{3}}

\put(5,10){\line(0,1){50}}
\put(25,10){\line(0,1){50}}
\put(45,30){\line(0,1){30}}
\put(65,30){\line(0,1){30}}

\put(5,10){\line(2,1){20}}
\put(5,20){\line(2,1){40}}
\put(5,40){\line(3,1){60}}
\put(5,20){\line(2,-1){20}}
\put(5,30){\line(2,-1){20}}
\put(5,60){\line(1,-1){20}}

\put(25,40){\line(2,-1){20}}
\put(25,60){\line(1,-1){20}}
\put(45,30){\line(2,1){20}}
\put(45,40){\line(2,-1){20}}
\put(45,60){\line(1,-1){20}}

\put(5,62){\makebox(0,0)[b]{$b_1$}}
\put(25,62){\makebox(0,0)[b]{$b_2$}}
\put(45,62){\makebox(0,0)[b]{$b_3$}}
\put(65,62){\makebox(0,0)[b]{$b_4$}}

\put(3,50){\makebox(0,0)[r]{$e$}}

\put(3,40){\makebox(0,0)[r]{$a_1$}}
\put(23,40){\makebox(0,0)[r]{$a_2$}}
\put(43,40){\makebox(0,0)[r]{$a_3$}}
\put(63,40){\makebox(0,0)[r]{$a_4$}}

\put(3,30){\makebox(0,0)[r]{$d_1$}}
\put(27,30){\makebox(0,0)[l]{$d_2$}}
\put(43,30){\makebox(0,0)[r]{$d_7$}}
\put(67,30){\makebox(0,0)[l]{$d_8$}}
\put(3,20){\makebox(0,0)[r]{$d_3$}}
\put(27,20){\makebox(0,0)[l]{$d_4$}}
\put(3,10){\makebox(0,0)[r]{$d_5$}}
\put(27,10){\makebox(0,0)[l]{$d_6$}}

\end{picture}
}
$$
Then $C_1=\{a_1,d_1,d_3, d_5\}$,
$C_2=\{a_2,d_2,d_3, d_5\}$,
$C_3=\{a_3,d_2,d_3, d_5\}$,
$C_4=\{a_4,d_7\}$,
$C'_1=\{b_1\}$,
$C'_2=\{b_2\}$,
$C'_3=\{b_3\}$ and
$C'_4=\{b_4\}$.
$D'_1=\{d_1,d_2,d_3, d_5, d_7\}$,
$j(1)=1$, $j(2)=2$, $j(3)=3$, $j(4)=5$ and $j(5)=7$.
$A_1=\{a_2, a_3, a_4, d_1\}$,
$A_2=\{a_4, d_1, d_2\}$,
$A_3=\{a_4, d_3\}$,
$A_4=\{a_4, d_5\}$ and
$A_5=\{d_5, d_7\}$.
$A'(d_4)=\{d_3, a_4, d_4\}$,
$A'(d_6)=\{d_5, d_7, d_6\}$ and
$A'(d_8)=\{d_5, d_7, d_8\}$.
\end{example}

Let
$(C''_1, \ldots, C''_u, C'''_1, \ldots, C'''_u)$
be a set of chains which satisfies
\assumpast\ of \ref{item:cycle} of Theorem \ref{thm:chain p trace}.
We set $a_i\define\max C''_i$, $b_i\define \min C'''_i$ for $1\leq i\leq u$.
Then $a_1$, \ldots, $a_u$, $b_1$, \ldots, $b_u$ are elements of $P$
which satisfy \assumpstar\ of Theorem \ref{thm:order p trace}.
By Lemma \ref{lem:chains const}, we see that 
there is a set $(C_1, \ldots, C_u,C'_1, \ldots, C'_u)$
of chains in $P$ with
$\max C_i=a_i$ (resp.\ $\min C'_i=b_i$) for $1\leq i\leq u$,
satisfying \assumpast\ of \ref{item:cycle} of Theorem \ref{thm:chain p trace}
and
$$
\dim\fcu=\#P-\#\lcu+1.
$$
Since $L_{(C''_1, \ldots, C''_u, C'''_1, \ldots, C'''_u)}=\lcu$,
we see, by 
Lemma \ref{lem:face dim},
the following.

\begin{thm}
\mylabel{thm:chain p nongor}
$\{\pppp\in\spec(\kcp)\mid
\kcp_{\pppp}$ is not \gor$\}$
is a closed subset of $\spec(\kcp)$ with dimension
$$
\max\{
\max_C(\#P-\#\linkcpx_P(C)),
\max_{(C_1,\ldots, C_u, C'_1, \ldots, C'_u)}(\#P-\#\lcu+2)\},
$$
where $C$ runs through chains with $\starcpx_P(C)$ is not pure and
$(C_1,\ldots,C_u,C'_1, \ldots, C'_u)$ runs through 
the sets of chains 
satisfying \assumpast\ of \ref{item:cycle} of Theorem \ref{thm:chain p trace}.
\end{thm}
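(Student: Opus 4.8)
The plan is to read off the statement from Fact \ref{fact:nongor locus}, the decomposition of $\sqrt{\trace(\omega_{\kcp})}$, and the face–dimension computations already at hand. By Fact \ref{fact:nongor locus} and the remark following it, the set in question is $V(\trace(\omega_{\kcp}))$, hence a closed subset of $\spec(\kcp)$, and its dimension equals $\dim\kcp/\trace(\omega_{\kcp})$, which, as recorded above, equals $\max\{\max_{C}\coht\pppp_C,\ \max_{(C_1,\ldots,C_u,C'_1,\ldots,C'_u)}\coht\pppp_{(C_1,\ldots,C_u,C'_1,\ldots,C'_u)}\}$. So it remains to evaluate these two maxima.

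For the first family, $\kcp/\pppp_C$ is the Ehrhart ring of the face $\fc$ of $\msCCC(P)$, so $\coht\pppp_C=\dim\fc+1=\#P-\#\linkcpx_P(C)$ by \ref{item:dim chain} of Lemma \ref{lem:face dim}; this reproduces the first term inside the outer maximum. For the second family, $\kcp/\pppp_{(C_1,\ldots,C_u,C'_1,\ldots,C'_u)}$ is the Ehrhart ring of $\fcu$, so $\coht\pppp_{(C_1,\ldots,C_u,C'_1,\ldots,C'_u)}=\dim\fcu+1\le\#P-\#\lcu+2$ by \ref{item:dim cycle} of Lemma \ref{lem:face dim}; this yields ``$\le$'' between the two maxima appearing in the theorem. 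For the reverse inequality I would show that for every admissible set of chains there is another admissible set with the \emph{same} associated set $\lcu$ --- which depends only on the elements $\max C_i$ and $\min C'_i$ --- whose prime attains coheight exactly $\#P-\#\lcu+2$. For $u\ge 2$ this is exactly Lemma \ref{lem:chains const}. For $u=1$, an admissible pair $(C_1,C'_1)$ has $\max C_1<\min C'_1$ with $[\max C_1,\min C'_1]$ not pure; put $C^{\ast}\define C_1\cup C'_1$, a chain since $C_1\cap C'_1=\emptyset$. Then $\xi^+(C^{\ast})=\xi^+(C_1)+\xi^+(C'_1)$ for every $\xi$, so $\pppp_{(C_1,C'_1)}=\pppp_{C^{\ast}}$, while $\starcpx_P(C^{\ast})=C_1\cup[\max C_1,\min C'_1]\cup C'_1$ is not pure and $\linkcpx_P(C^{\ast})=(\max C_1,\min C'_1)$ has $\#L_{(C_1,C'_1)}-2$ elements; hence $\coht\pppp_{(C_1,C'_1)}=\coht\pppp_{C^{\ast}}=\#P-\#\linkcpx_P(C^{\ast})=\#P-\#L_{(C_1,C'_1)}+2$. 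Taking maxima gives the equality of the two maxima, and hence the formula.

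I expect the reverse inequality for the ``cycle'' primes to be the only real obstacle: Example \ref{ex:not right dim} shows that a naively chosen admissible set of chains may give $\dim\fcu<\#P-\#\lcu+1$, so one genuinely needs the explicit re-choice of chains from Lemma \ref{lem:chains const}, together with the observation that re-choosing the chains with the same maxima and minima leaves $\lcu$ unchanged; the rest is routine bookkeeping with Krull dimensions of Ehrhart rings.
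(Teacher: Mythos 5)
Your proposal is correct and follows essentially the same route as the paper: reduce to the prime decomposition of $\sqrt{\trace(\omega_{\kcp})}$, compute coheights via the face dimensions in Lemma \ref{lem:face dim}, and invoke Lemma \ref{lem:chains const} to re-choose chains (with the same $\max C_i$, $\min C'_i$, hence the same $\lcu$) so that the bound in \ref{item:dim cycle} is attained. Your explicit reduction of the $u=1$ case to the single-chain prime $\pppp_{C^{\ast}}$ is exactly the computation the paper performs (slightly later, in the comparison with $\kop$), so no gap remains.
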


Finally, we show that the dimensions of non-\gor\ loci of the Ehrhart rings
of chain and order polytopes of a poset are the same.
First note that as is noted in the proof of Theorem \ref{thm:chain gor} that
$\kcp$ is \gor\ if and only if so is $\kop$.
Moreover, by Theorem \ref{thm:pspec gor} and 
\cite[Corollary 3.5]{hmp},
we see that $\kcp$ is \gor\ on the punctured spectrum if and only if
so is $\kop$, if and only if $P$ is a disjoint union of disconnected pure posets.

Now consider the other case.
Then 
there is a nonpure connected component of $P$.
In other words, there are $a_1$, \ldots, $a_u$, $b_1$, \ldots, $b_u\in P^\pm$
which satisfy 
\assumpstar\ of Theorem \ref{thm:order p trace}.

Let $a_1$, \ldots, $a_u$, $b_1$, \ldots, $b_u$ be elements of $P^\pm$
which satisfy 
\assumpstar\ of Theorem \ref{thm:order p trace},
First consider the case where $u\geq 2$.
Then $-\infty\not\in\{a_1, \ldots, a_u\}$ and $\infty\not\in\{b_1, \ldots, b_u\}$,
i.e.,
$a_1, \ldots, a_u, b_1, \ldots, b_u\in P$,
since $a_i\not\leq a_j$ (resp.\ $b_i\not\leq b_j$) if $i\neq j$.

Let $(C_1, \ldots, C_u, C'_1, \ldots, C'_u)$ be a set of chains in $P$ 
constructed by Lemma \ref{lem:chains const}.
Then $\lcu=\mabu$ and therefore by Lemmas \ref{lem:aff iso} and \ref{lem:chains const},
we see that
$$
\coht\pppp_{(C_1, \ldots, C_u, C'_1, \ldots, C'_u)}=
\coht\pppp'_{(a_1, \ldots, a_u, b_1, \ldots, b_u)}.
$$

Next consider the case where $u=1$.
If $a$, $b\in P^\pm$, $a<b$ and $\rank([a,b])>\dist(a,b)$, then take a maximal chain $C_1$
(resp. $C'_1$) in $(-\infty,a]$
(resp.\ $[b,\infty)$)
and set $C=C_1\cup C'_1$
($C_1=\emptyset$ (resp.\ $C'_1=\emptyset$) if $a=-\infty$ (resp.\ $b=\infty$)).
Then $\starcpx_P(C)$ is not pure and $\linkcpx_P(C)=(a,b)$.
Thus, 
$\#\linkcpx_P(C)=\#M_{(a,b)}-2$ and therefore
by Lemmas \ref{lem:aff iso} and \ref{lem:face dim}, we see that
$$
\coht\pppp_{C}=\coht\pppp'_{(a,b)}.
$$
Thus, we have shown that $\dim(\kop/\trace(\omega_{\kop}))\leq\dim(\kcp/\trace(\omega_{\kcp}))$.

Conversely, assume that $u>1$ and chains $C_1$, \ldots, $C_u$, $C'_1$, \ldots, $C'_u$ in $P$
which satisfy \assumpast\ of \ref{item:cycle} of Theorem \ref{thm:chain p trace} are given.
Set $a_i\define\max C_i$ (resp. $b_i\define\min C'_i$) for $1\leq i\leq u$.
Then $a_1$, \ldots, $a_u$, $b_1$, \ldots, $b_u$ satisfy \assumpstar\ of 
Theorem \ref{thm:order p trace}.
Further, since $\lcu=\mabu$, we see by Lemma \ref{lem:face dim} that
$$
\coht\pppp_{(C_1, \ldots, C_u,C'_1, \ldots, C'_u)}
\leq
\coht\pppp'_{(a_1, \ldots, a_u,b_1, \ldots, b_u)}.
$$

Next assume that a chain $C$ in $P$ with $\starcpx_P(C)$ is not pure is given.
Set $C=\{c_1, \ldots, c_t\}$, $c_1<\cdots<c_t$.
Then at least one of $(-\infty,c_1]$, $[c_1,c_2]$, \ldots, $[c_{t-1},c_t]$,
$[c_t,\infty)$ is not pure.
Suppose that $[c_i, c_{i+1}]$ is not pure.
Then $\linkcpx_P(C)\supset(c_i,c_{i+1})=M_{(c_i,c_{i+1})}\setminus\{c_i,c_{i+1}\}$.
Therefore,
$$
\coht\pppp_{C}\leq\coht\pppp'_{(c_i,c_{i+1})}
$$
by Lemmas \ref{lem:aff iso} and \ref{lem:face dim}.

The cases where $(-\infty,c_1]$ is not pure or $[c_t,\infty)$ is not pure are
treated by similar ways.
Thus, we have shown that $\dim(\kcp/\trace(\omega_{\kcp}))\leq\dim(\kop/\trace(\omega_{\kop}))$.

Therefore, we see the following.

\begin{thm}
\mylabel{thm:dim same}
The non-\gor\ loci of the Ehrhart rings of the chain and the order polytopes of 
a poset have the same dimension.
\end{thm}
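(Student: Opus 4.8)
The plan is to read off both dimensions from the closed formulas already obtained in Theorems \ref{thm:order p nongor} and \ref{thm:chain p nongor}, and then to prove the two inequalities $\dim(\kop/\trace(\omega_{\kop}))\leq\dim(\kcp/\trace(\omega_{\kcp}))$ and $\dim(\kcp/\trace(\omega_{\kcp}))\leq\dim(\kop/\trace(\omega_{\kop}))$ by matching, sequence by sequence, the coheights of the prime ideals that occur in the two formulas. First I would dispose of the degenerate cases: if $P$ is pure then both rings are \gor\ by Fact \ref{fact:order gor} and Theorem \ref{thm:chain gor}, so both non-\gor\ loci are empty; and if $P$ is a disjoint union of pure posets but is itself not pure, then by Theorem \ref{thm:pspec gor} together with \cite[Corollary 3.5]{hmp} both rings are \gor\ on the punctured spectrum, so both non-\gor\ loci equal $\{\mmmm\}$. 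Hence I may assume $P$ has a non-pure connected component, equivalently that there exist $a_1,\ldots,a_u,b_1,\ldots,b_u\in P^\pm$ satisfying \assumpstar\ of Theorem \ref{thm:order p trace}.

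To prove $\dim(\kop/\trace(\omega_{\kop}))\leq\dim(\kcp/\trace(\omega_{\kcp}))$, fix such a sequence. If $u\geq2$, then since the $a_i$ are pairwise incomparable and likewise the $b_i$, all of $a_1,\ldots,a_u,b_1,\ldots,b_u$ lie in $P$, and Lemma \ref{lem:chains const} produces chains $(C_1,\ldots,C_u,C'_1,\ldots,C'_u)$ in $P$ satisfying \assumpast\ of \ref{item:cycle} of Theorem \ref{thm:chain p trace} with $\max C_i=a_i$, $\min C'_i=b_i$, $\lcu=\mabu$, and $\dim\fcu=\#P-\#\lcu+1$; combined with the dimension count for $\gabu$ in Lemma \ref{lem:aff iso}, this gives $\coht\pppp_{(C_1,\ldots,C_u,C'_1,\ldots,C'_u)}=\coht\pppp'_{(a_1,\ldots,a_u,b_1,\ldots,b_u)}$. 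If $u=1$, write $a=a_1$, $b=b_1$, take a maximal chain in $(-\infty,a]_{P^\pm}$ and one in $[b,\infty)_{P^\pm}$ (empty when $a=-\infty$, resp.\ $b=\infty$), and glue them into a chain $C$ with $\starcpx_P(C)$ not pure and $\linkcpx_P(C)=(a,b)$, so that $\#\linkcpx_P(C)=\#\mab-2$ and Lemma \ref{lem:aff iso} together with \ref{item:dim chain} of Lemma \ref{lem:face dim} give $\coht\pppp_C=\coht\pppp'_{(a,b)}$. Taking the maximum over all admissible sequences and comparing with the two formulas yields the inequality.

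For the reverse inequality I would run through the two families of primes appearing in the chain formula. Given chains $(C_1,\ldots,C_u,C'_1,\ldots,C'_u)$ satisfying \assumpast\ of \ref{item:cycle} of Theorem \ref{thm:chain p trace} with $u\geq2$, set $a_i=\max C_i$ and $b_i=\min C'_i$; these satisfy \assumpstar\ of Theorem \ref{thm:order p trace}, and since $\lcu=\mabu$, \ref{item:dim cycle} of Lemma \ref{lem:face dim} gives $\coht\pppp_{(C_1,\ldots,C_u,C'_1,\ldots,C'_u)}\leq\coht\pppp'_{(a_1,\ldots,a_u,b_1,\ldots,b_u)}$. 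Given a chain $C=\{c_1<\cdots<c_t\}$ in $P$ with $\starcpx_P(C)$ not pure, at least one of $(-\infty,c_1]$, $[c_1,c_2]$, $\ldots$, $[c_{t-1},c_t]$, $[c_t,\infty)$ is not pure; in the internal case, say $[c_i,c_{i+1}]$ not pure, one has $\linkcpx_P(C)\supset(c_i,c_{i+1})=M_{(c_i,c_{i+1})}\setminus\{c_i,c_{i+1}\}$, hence $\#\linkcpx_P(C)\geq\#M_{(c_i,c_{i+1})}-2$ and Lemma \ref{lem:aff iso} together with \ref{item:dim chain} of Lemma \ref{lem:face dim} give $\coht\pppp_C\leq\coht\pppp'_{(c_i,c_{i+1})}$; the two boundary cases are treated the same way with $a=-\infty$ or $b=\infty$. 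Comparing the maxima through the two formulas gives $\dim(\kcp/\trace(\omega_{\kcp}))\leq\dim(\kop/\trace(\omega_{\kop}))$, and the theorem follows.

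Since the substantial technical input is already in place --- in particular Lemma \ref{lem:chains const}, which circumvents the phenomenon of Example \ref{ex:not right dim} --- the residual difficulty here is mostly organisational: one must check that every prime occurring in one formula is dominated, in coheight, by a prime occurring in the other, while keeping careful track of the $u=1$ versus $u\geq2$ dichotomy and of the $\pm\infty$ endpoints. I expect the most delicate point to be the $u\geq2$ case of the first inequality, where one needs the chains furnished by Lemma \ref{lem:chains const} to realise $\lcu$ \emph{equal} to $\mabu$, not merely contained in it, so that the dimension equality for $\fcu$ translates, via Lemma \ref{lem:aff iso}, into an honest equality --- rather than just an inequality --- of coheights.
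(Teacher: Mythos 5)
Your proposal is correct and follows essentially the same route as the paper: dispose of the pure and disjoint-union-of-pure cases via Fact \ref{fact:order gor}, Theorem \ref{thm:chain gor}, Theorem \ref{thm:pspec gor} and \cite[Corollary 3.5]{hmp}, then prove the two inequalities by matching coheights of primes, using Lemma \ref{lem:chains const} (with $\lcu=\mabu$) together with Lemmas \ref{lem:aff iso} and \ref{lem:face dim} for the $u\geq2$ case and the glued maximal chains for the $u=1$ case. The delicate point you flag --- needing equality $\lcu=\mabu$ and the dimension equality $\dim\fcu=\#P-\#\lcu+1$ rather than mere inequalities --- is exactly what Lemma \ref{lem:chains const} supplies, and the paper's argument uses it in precisely this way.
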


By the above argument and Corollary \ref{cor:anysize}, we see the following.

\begin{cor}
Suppose that $\kcp$ is not \gor, i.e., $P$ is not pure.
Then the dimension of the non-\gor\ locus of $\kcp$ is at most $\dim\kcp-4$.
In particular, for any $\pppp\in\spec(\kcp)$ with $\height\pppp\leq 3$,
$\kcp_\pppp$ is \gor.
Conversely,
let $m$ and $n$ be integers with $0 \leq m \leq n-4$.  
Then there exists a poset $P$ such that $\kcp$ has
 dimension $n$ with a non-\gor\ locus of dimension $m$.
\end{cor}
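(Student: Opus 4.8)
The plan is to deduce the statement for $\kcp$ from the already-established order-polytope version by transporting everything through Theorem \ref{thm:dim same}. First I would observe that the hypothesis ``$\kcp$ is not \gor'' is, by Theorem \ref{thm:chain gor}, equivalent to ``$P$ is not pure'', which is precisely the hypothesis of Corollary \ref{cor:anysize}. Next, since $\msCCC(P)$ and $\msOOO(P)$ are both polytopes of dimension $\#P$, we have $\dim\kcp=\#P+1=\dim\kop$. Combining Theorem \ref{thm:dim same} (the non-\gor\ loci of $\kcp$ and $\kop$ have the same dimension) with the bound of Corollary \ref{cor:anysize} then gives that the non-\gor\ locus of $\kcp$ has dimension at most $\dim\kop-4=\dim\kcp-4$.

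For the statement about primes of small height, I would use that $\kcp$, being a normal affine semigroup ring, is a \cm\ affine domain over $\KKK$, hence catenary and equidimensional, so that $\height\pppp+\coht\pppp=\dim\kcp$ for every $\pppp\in\spec(\kcp)$. If $\kcp_\pppp$ is not \gor, then by Fact \ref{fact:nongor locus} $\pppp$ lies in the non-\gor\ locus, whence $\coht\pppp\leq\dim\kcp-4$ by the previous paragraph, so $\height\pppp\geq4$. Taking the contrapositive yields that $\kcp_\pppp$ is \gor\ whenever $\height\pppp\leq3$.

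For the converse I would re-use the poset built in the proof of Corollary \ref{cor:anysize}: given integers $0\leq m\leq n-4$, take $P=(P_1+P_2)\oplus C$ where $P_1$ is a chain with $\#P_1=n-m-2\geq2$, $P_2$ is a single point disjoint from $P_1$, and $C$ is a chain with $\#C=m$. Then $P$ is not pure and $\#P=n-1$, so $\dim\kcp=n$; and since the non-\gor\ locus of $\kop$ has dimension $m$, Theorem \ref{thm:dim same} shows the non-\gor\ locus of $\kcp$ also has dimension $m$.

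I do not expect a real obstacle here: the substantive work has already been carried out for $\kop$ and for the comparison of the two loci, so the only points to verify are the bookkeeping identity $\dim\kcp=\dim\kop$, the equivalence ``$\kcp$ not \gor\ $\iff$ $P$ not pure'', and the standard fact that a \cm\ affine domain is equidimensional and catenary, which is what licenses passing from the dimension bound on the non-\gor\ locus to the height bound on individual primes.
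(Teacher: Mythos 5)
Your proposal is correct and follows essentially the same route as the paper, which derives this corollary directly from Theorem \ref{thm:dim same} combined with Corollary \ref{cor:anysize} (including reusing the same poset $(P_1+P_2)\oplus C$ for the converse). The only detail you spell out that the paper leaves implicit is the passage from the coheight bound to the height bound via catenarity and equidimensionality of the normal affine semigroup ring, and that step is fine.
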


\begin{example}
\rm
Let 
$$
P=
\vcenter{\hsize=.4\textwidth\relax
\begin{picture}(35,40)

\put(5,10){\circle*{3}}
\put(5,20){\circle*{3}}
\put(5,30){\circle*{3}}
\put(25,10){\circle*{3}}
\put(25,30){\circle*{3}}

\put(5,10){\line(0,1){20}}
\put(5,10){\line(1,1){20}}
\put(5,30){\line(1,-1){20}}
\put(25,10){\line(0,1){20}}

\put(3,30){\makebox(0,0)[r]{$b_1$}}
\put(3,10){\makebox(0,0)[r]{$a_1$}}
\put(27,30){\makebox(0,0)[l]{$b_2$}}
\put(27,10){\makebox(0,0)[l]{$a_2$}}

\end{picture}
}
$$
Then
$$
\sqrt{\trace(\omega_{\kcp})}=
\pppp_{\{a_1\}}\cap\pppp_{\{b_1\}}\cap\pppp_{(\{a_1\},\{a_2\},\{b_1\},\{b_2\})},
$$
$$
\sqrt{\trace(\omega_{\kop})}=
\pppp'_{(a_1,\infty)}\cap\pppp'_{(-\infty,b_1)}\cap\pppp'_{(a_1,a_2,b_1,b_2)}
$$
and
$$
\dim\kcp/\trace(\omega_{\kcp})=\dim\kop/\trace(\omega_{\kop})=2.
$$

Let
$$
P=
\vcenter{\hsize=.4\textwidth\relax
\begin{picture}(35,50)

\put(5,10){\circle*{3}}
\put(5,20){\circle*{3}}
\put(5,30){\circle*{3}}
\put(5,40){\circle*{3}}
\put(25,10){\circle*{3}}
\put(25,30){\circle*{3}}

\put(5,10){\line(0,1){30}}
\put(5,20){\line(2,1){20}}
\put(5,20){\line(2,-1){20}}

\put(3,20){\makebox(0,0)[r]{$a$}}
\put(5,8){\makebox(0,0)[t]{$c$}}
\put(25,8){\makebox(0,0)[t]{$d$}}

\end{picture}
}
$$
Then
$$
\sqrt{\trace(\omega_{\kcp})}=\pppp_{\{a,c\}}\cap\pppp_{\{a,d\}},
\qquad
\sqrt{\trace(\omega_{\kop})}=\pppp'_{(a,\infty)}
$$
and
$$
\dim\kcp/\trace(\omega_{\kcp})=\dim\kop/\trace(\omega_{\kop})=3.
$$
Note that $\sqrt{\trace(\omega_{\kop})}$ is a prime ideal while
$\sqrt{\trace(\omega_{\kcp})}$ is not.

Let 
$$
P=
\vcenter{\hsize=.4\textwidth\relax
\begin{picture}(35,60)

\put(5,10){\circle*{3}}
\put(5,20){\circle*{3}}
\put(5,30){\circle*{3}}
\put(5,40){\circle*{3}}
\put(5,50){\circle*{3}}
\put(25,10){\circle*{3}}
\put(25,20){\circle*{3}}
\put(25,40){\circle*{3}}
\put(25,50){\circle*{3}}

\put(5,10){\line(0,1){40}}
\put(5,20){\line(1,1){20}}
\put(5,40){\line(1,-1){20}}
\put(25,10){\line(0,1){40}}

\put(3,10){\makebox(0,0)[r]{$c_1$}}
\put(3,20){\makebox(0,0)[r]{$a_1$}}
\put(3,40){\makebox(0,0)[r]{$b_1$}}
\put(3,50){\makebox(0,0)[r]{$d_1$}}
\put(27,10){\makebox(0,0)[l]{$c_2$}}
\put(27,20){\makebox(0,0)[l]{$a_2$}}
\put(27,40){\makebox(0,0)[l]{$b_2$}}
\put(27,50){\makebox(0,0)[l]{$d_2$}}

\end{picture}
}
$$
Then
$$
\sqrt{\trace(\omega_{\kcp})}=
\pppp_{\{a_1,c_1\}}\cap\pppp_{\{b_1,d_1\}}\cap
\pppp_{(\{a_1,c_1\},\{a_2,c_2\},\{b_1,d_1\},\{b_2,d_2\})},
$$
$$
\sqrt{\trace(\omega_{\kop})}=
\pppp'_{(a_1,\infty)}\cap\pppp'_{(-\infty,b_1)}\cap\pppp'_{(a_1,a_2,b_1,b_2)}
$$
and
$$
\dim\kcp/\trace(\omega_{\kcp})=\dim\kop/\trace(\omega_{\kop})=6.
$$
\end{example}


\footnotesize{%
\noindent{$^{1}$ \textsc{Department of Mathematics, Kyoto University of Education,
1 Fujinomori, Fukakusa, Fushimi-ku, Kyoto, 612-8522, Japan}}
\\ \textit{E-mail address}: {\tt g53448@kyokyo-u.ac.jp}\\
\noindent{$^{2}$ \textsc{Department of Mathematics, University of Michigan, Ann Arbor, MI 48109, USA}}
\\ \textit{E-mail address}: {\tt jrpage@umich.edu}}

\end{document}